\documentclass[11pt]{article}
\usepackage{etex}
\usepackage[utf8]{inputenc}
\usepackage[all]{xy}
\usepackage{amsfonts}
\usepackage{amsthm}
\usepackage{enumerate}
\usepackage{graphicx}
\usepackage{mathrsfs}
\usepackage{bm}
\usepackage[noadjust]{cite}
\usepackage{amssymb,amsmath} 
\usepackage{makecell}
\usepackage{tikz}
\usepackage{pgf}
\usepackage{tikz}
\usetikzlibrary{patterns}
\usepackage{pgffor}
\usepackage{pgfcalendar}
\usepackage{pgfpages}
\usepackage{shuffle,yfonts}
\usepackage{mathtools}

\DeclareFontFamily{U}{shuffle}{}
\DeclareFontShape{U}{shuffle}{m}{n}{ <-8>shuffle7 <8->shuffle10}{}

\mathtoolsset{showonlyrefs}

\newcommand\ga{{\alpha}}

\newcommand\eps{{\varepsilon}}
\newcommand{\calB}{{\mathcal B}}
\newcommand{\bfk}{{\boldsymbol{\sl{k}}}}
\newcommand{\bfm}{{\boldsymbol{\sl{m}}}}
\newcommand{\bfn}{{\boldsymbol{\sl{n}}}}

 \allowdisplaybreaks
\usetikzlibrary{arrows,shapes,chains}

\allowdisplaybreaks

\DeclareMathOperator*{\dep}{dep}
\DeclareMathOperator{\Li}{Li}
\DeclareMathOperator{\Res}{Res}
\DeclareMathOperator{\Gr}{Gr}
\DeclareMathOperator{\Nor}{\text{\rm N\"o}}

\def\N{\mathbb{N}}

\def\Q{\mathbb{Q}}
\def\CC{\mathbb{C}}

\theoremstyle{plain}
\newtheorem{thm}{Theorem}[section]
\newtheorem{lem}[thm]{Lemma}
\newtheorem{cor}[thm]{Corollary}
\newtheorem{con}[thm]{Conjecture}

\theoremstyle{definition}

\newtheorem{re}[thm]{Remark}

\begin{document}
\title{\bf Rational Approximations for Reciprocals of Multiple Zeta Values and Trivariate Cauchy Numbers}
\author{
{Ce Xu${}^{a,}$\thanks{Email: cexu2020@ahnu.edu.cn, ORCID 0000-0002-0059-7420.}{} \ \ and Jianqiang Zhao${}^{b,}$\thanks{Email: zhaoj@ihes.fr, ORCID 0000-0003-1407-4230, corresponding author.}}\\[1mm]
\small a. School of Mathematics and Statistics, Anhui Normal University, Wuhu 241002, PRC\\
\small b. Department of Mathematics, The Bishop's School, La Jolla, CA 92037, USA}

\date{}
\maketitle

\noindent{\bf Abstract.}
In this paper, we will study a trivariate extension of the Cauchy numbers of both the first kind (also called Gregory coefficients) and the second kind (also called N\"orlund numbers) via the Laurent expansion of the reciprocal of any positive integer power (which is called the order) of multiple polylogarithms. In the case of logarithm, we will show by the WZ method that for each order $\ell>1$ some Gregory coefficient of order $\ell$ must vanish, in contrast to the fact that all classical Gregory coefficients are nonzero. We also prove in this higher order logarithm case that the sequence is eventually alternating for each fixed order, a property enjoyed by the classical Gregory coefficients. In the most general setting, we conjecture that these new sequences are all eventually positive, which is supported by strong numerical evidence. Finally, we confirm this conjecture in the special case of polylogarithms and double polylogarithms. As a by product, for each zeta value and double zeta value, we find an infinite family of identities expressing its reciprocal as a sum of a rational number and an improper integral.

\medskip

\noindent{\bf Keywords}: Rational approximation; Cauchy numbers; Gregory coefficients; N\"orlund numbers; multiple polylogarithms; multiple zeta values; contour integration.
\medskip

\noindent{\bf AMS Subject Classifications (2020):} 11B83, 30E20, 11M32, 11G55, 65H04, 05A10.



\section{Introduction}

The study of various constants and sequences has been a major theme in mathematical research for centuries. Many prominent mathematicians have their names attached to some of them, such as Euler, Gauss, Ramanujan, and others. We invite readers to explore Sloan's online encyclopedia of integer sequences \cite{Sloane2025} for many interesting examples. In this paper, we first identify a common extension of two such sequences, namely the Cauchy numbers of the first and second kinds, and further generalize them to higher orders. We then incorporate multiple polylogarithm functions into this framework for the first time and define a trivariate version as a vast extension of these families. This trivariate generalization of Cauchy numbers, together with Panzer's parity results \cite{Panzer2017}, yields rational approximations to the reciprocals of multiple zeta values, with explicit formulas for the single and double cases. We hope these approximations will shed new light on the irrationality of multiple zeta values, a problem that has remained largely untouched except for the odd Riemann zeta values. Recent developments on the irrationality of odd Riemann zeta values appear in Lai-Yu \cite{LY2020} and Fischler \cite{Fischler2026} and references therein. Moreover, recent results on the irrationality of $p$-adic zeta values can be found in Lai-Sprang-Zudilin \cite{LSZ2026} and the references cited there.

\subsection{Cauchy numbers of the first kind}
Define the \emph{Gregory coefficients} (aka \emph{Cauchy numbers of the first kind}) $\{G_n\}_{n\geq 1}$ by the generating function
\begin{equation}\label{defn-Gregorycoefficients1}
\frac{x}{\log(1+x)}=1+\sum_{n=1}^\infty G_nx^n.
\end{equation}

Gregory was among the first ones, if not the first, to introduce the numbers $G_n$ in developing his method for numerical integration (see \cite{Phillips1972}). In addition to Gregory, these numbers have been investigated by a distinguished line of mathematicians that includes Mascheroni, Cauchy, Schr\"oder, and many others. Moreover, they have been given some different names such as the Bernoulli numbers of the second kind \cite{Bl2017,Roman1984} and the Cauchy numbers of the first kind \cite{CC2012,Komatsu2013,MSC2006}. Many of their properties have been discovered and then rediscovered repeatedly. In particular, Komatsu generalized these to the so-called poly-Cauchy numbers in \cite{Komatsu2013}. Incidentally, other generalizations have been proposed; for instance, Matsusaka, Murahara, and Onozuka \cite{MMO2025} defined multidimensional generalized Gregory coefficients $G_{m,n}$, and Ishii and Shinohara \cite{ISSH2025} studied their relations with multiple zeta functions at non-positive integers.

It is well-known that all $G_n$'s are rational numbers starting with
\begin{align*}
G_1=\frac1{2},\ G_2=-\frac1{12},\ G_3=\frac1{24},\ G_4=-\frac{19}{720}, \ldots.
\end{align*}
One can use \cite{Sloane2025} (see OEIS A002206 and A002207) to find more terms of these numbers.
They have been shown to be intimately related Euler's constant by Mascheroni \cite[pp. 21-23]{Mascheroni}, a fact that has been extended to some ``finite'' version in \cite{KanekoMatsusakaSeki2025}. Among other important properties, the Gregory coefficients are equipped with alternating signs, or more precisely, $(-1)^{n-1}G_n>0$ for $n\geq 1$ (see Remark~\ref{rem:nonZero}), which we will generalize in this paper.

For all $\ell\ge 1$, we extend \eqref{defn-Gregorycoefficients1} to define the \emph{Gregory coefficients of order $\ell$}, denoted by $G_n^{(\ell)}$,  as follows (see \cite{XuZhao2026}):
\begin{align}\label{defn-Gregorycoefficientsp}
\left(\frac{x}{\log(1+x)}\right)^\ell =1+\sum_{n=1}^\infty G_n^{(\ell)}x^n.
\end{align}
These numbers behave very much like their classical counterpart at order 1. However,
we will see in Theorem~\ref{thm:vanishing} that, unlike the classical case where all $G_n$'s are non-zero (see \eqref{equ:nonZero}), some particular higher order Gregory coefficient always vanishes for each order $\ell$. The proof relies on the WZ-method in the odd order cases.

The second main result of this paper is the following theorem which says that, for every order $\ell\ge 1$, the alternating pattern of higher order Gregory coefficients $G_n^{(\ell)}$ holds for all sufficiently large $n$, generalizing the corresponding result for the classical Gregory coefficients (see Remark~\ref{rem:nonZero}). For convenience, we say a power series $\sum{}_{n\ge 0} a_n x^n$ has eventually alternating coefficients if $(-1)^n a_n>0$ for all $n\gg 0$ or $(-1)^n a_n<0$ for all $n\gg 0$.

\begin{thm}\label{thm:GregoryGeneralOrderA} \emph{(=Theorem \ref{thm:GregoryGeneralOrder})}
For each order $\ell\ge 1$, there is some positive integer $\Gr(\ell)$ such that for all $n\ge \Gr(\ell)$
\begin{equation*}
(-1)^{n-1} G_n^{(\ell)}>0.
\end{equation*}
Therefore, the Maclaurin series of $x^\ell/\log^\ell(1+x)$ has eventually alternating coefficients.
\end{thm}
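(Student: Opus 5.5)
The plan is to use a contour integral (Cauchy/Hankel) representation of the coefficients together with singularity analysis. Substitute $x\mapsto -x$ and set
\[
f_\ell(x)=\left(\frac{-x}{\log(1-x)}\right)^\ell=1+\sum_{n\ge1}(-1)^nG_n^{(\ell)}x^n .
\]
The claim $(-1)^{n-1}G_n^{(\ell)}>0$ is then equivalent to the coefficients of $f_\ell$ being eventually negative.

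\textbf{Step 1: analytic continuation.} The function $\log(1-x)$ vanishes on the principal branch only at $x=0$. Hence $f_\ell$ is analytic in $\CC\setminus[1,\infty)$, and there is no other singularity on the principal sheet. For $n\ge1$ we write
\[
[x^n]f_\ell=\frac1{2\pi i}\oint_{|x|=r}\frac{f_\ell(x)}{x^{n+1}}\,dx .
\]
We then deform the contour to a Hankel contour around the cut $[1,\infty)$, closed by a large circle. The large circle contributes nothing as its radius $R\to\infty$, because $f_\ell(x)=O(R^\ell/\log^\ell R)$ while $x^{-n-1}$ decays faster once $n\ge\ell$.

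\textbf{Step 2: the jump across the cut.} For $t>1$ we have
\[
\log(1-t\pm i0)=\log(t-1)\mp i\pi .
\]
This gives the exact integral representation
\[
[x^n]f_\ell=-\frac{(-1)^\ell}{\pi}\int_1^\infty \operatorname{Im}\!\left[\frac{1}{(\log(t-1)-i\pi)^{\ell}}\right]\frac{dt}{t^{n+1-\ell}},
\]
with the sign to be checked carefully. Writing $\log(t-1)-i\pi=\rho e^{-i\theta}$ with $\theta\in(0,\pi)$, the imaginary part becomes $\rho^{-\ell}\sin(\ell\theta)$. For $\ell=1$, $\sin\theta>0$ on the whole cut, so this should recover positivity for all $n$, consistent with Remark~\ref{rem:nonZero}. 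That serves as a sanity check on the sign.

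\textbf{Step 3: asymptotics as $n\to\infty$.} For $\ell>1$, $\sin(\ell\theta)$ changes sign along the cut, which is exactly why some coefficients can vanish (Theorem~\ref{thm:vanishing}). As $n\to\infty$, however, the weight $t^{-(n+1-\ell)}$ concentrates near $t=1^+$. There $\log(t-1)\to-\infty$, so $\theta\to\pi^-$; more precisely $\theta=\pi-\pi/|\log(t-1)|+O(|\log(t-1)|^{-2})$. It follows that
\[
\sin(\ell\theta)=(-1)^{\ell-1}\,\frac{\ell\pi}{|\log(t-1)|}\,\bigl(1+o(1)\bigr),
\]
so the sign is constant on some interval $(1,1+\delta)$. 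We split the integral at $1+\delta$:
\begin{itemize}
\item The tail $\int_{1+\delta}^\infty$ is $O\bigl((1+\delta)^{-n}\bigr)$ in absolute value.
\item The main part is of exact order $n^{-1}(\log n)^{-\ell-1}$, by substituting $t=1+u/n$ or using a Watson-type lemma with logarithms. The sign of the main part is fixed and independent of $n$; after the prefactor $(-1)^\ell$ it combines to give negative coefficients of $f_\ell$.
\end{itemize}
Since the main part decays only polynomially while the tail decays exponentially, the main part dominates for all large $n$, and the existence of $\Gr(\ell)$ follows.

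An alternative route is to apply Flajolet–Odlyzko transfer theorems to the singular expansion $f_\ell(x)\sim \ell(1-x)^{\ldots}$ near $x=1$. There $f_\ell=(-\log(1-x))^{-\ell}(1+O(1-x))$, and the coefficients of $(-\log(1-x))^{-\ell}$ are asymptotically $-\ell/(n\log^{\ell+1}n)$, which is negative, as required. The Hankel route is self-contained, so I would present that one and cite the transfer result only as a cross-check.

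\textbf{Main obstacle.} The delicate step is Step 3: extracting a sign-definite lower bound for the near-$t=1$ integral while controlling the sign changes of $\sin(\ell\theta)$ away from $1$. I would make $\delta$ explicit, for instance by requiring $|\log(t-1)|>C\ell$ so that $\theta$ lies within $\pi/\ell$ of $\pi$, and then bound the main part below by integrating over $(1,1+1/n)$ only. This should give a clean lower bound $\gg n^{-1}(\log n)^{-\ell-1}$ that beats the exponential tail.
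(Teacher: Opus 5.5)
Your argument is correct, and it starts from the same Schr\"oder-type Hankel representation as the paper. Your $t$ is the paper's $z+1$, and your kernel $\operatorname{Im}[(\log(t-1)-i\pi)^{-\ell}]\,t^{\ell-n-1}$ is, up to a factor $\pi$, the paper's integrand before the substitution $z\to 1/z$ that produces \eqref{equ:GnEllIntegral}. Do fix the sign, though. The slit contributes $\frac{1}{2\pi i}\int_1^\infty (f_\ell(t+i0)-f_\ell(t-i0))\,t^{-n-1}\,dt$ with $\log(1-(t+i0))=\log(t-1)-i\pi$, so the prefactor is $+(-1)^\ell/\pi$, not $-(-1)^\ell/\pi$. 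With the correct sign, the integrand near $t=1$ is $\approx -\ell\,|\log(t-1)|^{-\ell-1}t^{\ell-n-1}<0$, and $\ell=1$ reproduces \eqref{equ:nonZero}. With the sign you displayed, your own Step~3 would give positive coefficients.

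The two proofs differ in the estimate.
\begin{itemize}
\item \textbf{The paper's route.} It folds the integral and locates the largest zero $r$ of the kernel via the Sturm-sequence Lemma~\ref{equ:keyLem}. It then bounds the sign-definite piece below only by $\alpha f_n(1/r)$, which is itself exponentially small. Since $f_n(r)=f_n(1/r)/r^2$ has the same exponential size, the paper needs the intermediate cut $b$ with $\beta_2(b)<\alpha r^2$, plus monotonicity of $f_n$, to make the good piece win.
\item \textbf{Your route.} You use the concentration of $t^{-n}$ at $t=1^+$ to get a polynomial lower bound $\gg 1/(n\log^{\ell+1}n)$. Everything beyond $1+\delta$ is then killed by the crude bound $|\operatorname{Im}[(\log(t-1)-i\pi)^{-\ell}]|\le\pi^{-\ell}$. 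No cut point $b$ and no information about roots is needed.
\end{itemize}

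Your polar form also makes Lemma~\ref{equ:keyLem} trivial. Since $H_\ell(x)=\operatorname{Im}(x+i)^\ell$, its roots are $\cot(m\pi/\ell)$ for $1\le m\le \ell-1$. Hence the paper's $1/r$ equals $\exp(-\pi\cot(\pi/\ell))$, which is exactly the value of $t-1$ where your $\sin(\ell\theta)$ first changes sign, i.e.\ where $\ell(\pi-\theta)=\pi$.

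What each route buys: the paper's gives explicit, computable quantities $\alpha,\beta_1(b),\beta_2(b)$, which the authors turn into the thresholds in Tables~\ref{tbl:n0bound} and~\ref{tbl:Newn0bound}. Yours gives brevity and the true order of magnitude. In fact $f_\ell(x)=\bigl(\tfrac1x\log\tfrac1{1-x}\bigr)^{-\ell}$ holds exactly, not just up to a factor $1+O(1-x)$. So the Flajolet--Odlyzko transfer theorem, with exponent $0$ on $(1-x)^{-1}$ and exponent $-\ell$ on the logarithmic factor, gives $[x^n]f_\ell\sim-\ell/(n\log^{\ell+1}n)$ and the theorem at once.
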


Motivated by Schr\"oder's approach in \cite{Schr1880}, our main idea is to transform the computation of $G_n^{(\ell)}$ to some integrals by contour integration using complex analysis. By cutting these integrals into two parts appropriately with opposite signs, we show that the positive part always dominates the negative one, thereby proving that $(-1)^{n-1}G_n^{(\ell)}>0$, for all sufficiently large $n$.

\subsection{Cauchy numbers of the second kind}
We define the \emph{(normalized) N\"orlund numbers} (aka \emph{Cauchy numbers of the second kind}) by the generating function
\begin{equation}\label{defn-Norlund1}
\frac{x}{(1+x)\log(1+x)}=1+\sum_{n=1}^\infty N_n x^n .
\end{equation}
We remark that traditionally the N\"orlund numbers are defined by $n!N_n$ using an exponential generating function.
These numbers are called N\"orlund numbers in \cite{Howard1993,Young2008} since N\"orlund apparently studied these in \cite[pp.\ 150-151]{Norlund1924} while some others call them the Cauchy numbers of the second kind \cite{C1974,Komatsu2015}.

We may mimic the definition for higher order Gregory coefficients by defining \emph{higher order N\"orlund numbers ${\tilde N}_n^{(\ell)}$ of the first kind}
\begin{align}\label{defn-Norlund-first}
 \left(\frac{x}{(1+x)\log(1+x)}\right)^\ell =1+\sum_{n=1}^\infty {\tilde N}_n^{(\ell)}x^n.
\end{align}
Then the following result is an immediate consequence of the alternating properties of the N\"orlund numbers.

\begin{thm}\label{thm:Norlund1A} \emph{(=Theorem~\ref{thm:Norlund1})}
For each order $\ell\ge 1$ and all $n\ge 1$, we have
\begin{equation*}
(-1)^{n} {\tilde N}_n^{(\ell)}>0.
\end{equation*}
\end{thm}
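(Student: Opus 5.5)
The plan is to reduce everything to the case $\ell=1$ and then take powers. Put
\[
f(x)=\frac{x}{(1+x)\log(1+x)}=1+\sum_{n\ge1}N_nx^n .
\]
The statement $(-1)^n\tilde N_n^{(\ell)}>0$ for all $n\ge1$ is equivalent to saying that $f(-x)^\ell$ has strictly positive Maclaurin coefficients in every degree $n\ge1$.

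Suppose we know the order-one fact that $(-1)^nN_n>0$ for all $n\ge1$. Then $f(-x)=1+\sum_{n\ge1}(-1)^nN_nx^n$ has constant term $1$ and all other coefficients strictly positive. A product of power series with nonnegative coefficients has nonnegative coefficients. Moreover, the coefficient of $x^n$ in $f(-x)^\ell$ contains the term $\ell\cdot(-1)^nN_n\cdot 1^{\ell-1}>0$, obtained by choosing $x^n$ from one factor and $1$ from all the others. Hence it is strictly positive, and the theorem follows by an immediate induction on $\ell$, or directly by the multinomial expansion.

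The real content is therefore the classical sign property $(-1)^nN_n>0$, which the paper calls ``the alternating properties of the N\"orlund numbers.'' If it has not been established earlier, I would derive it from an integral representation. Since
\[
\frac{1}{(1+x)\log(1+x)}\cdot x=\frac{x}{\log(1+x)}\cdot\frac{1}{1+x},
\]
one could try to combine the Gregory coefficients with the geometric series, but their signs fight each other. A cleaner route is to write
\[
\frac{x}{(1+x)\log(1+x)}=\int_0^1(1+x)^{t-1}\,dt ,
\]
using $\int_0^1 (1+x)^{t}\,dt=x/\log(1+x)$ and dividing by $1+x$. Then
\[
N_n=\int_0^1\binom{t-1}{n}\,dt .
\]
For $0<t<1$ and $n\ge1$ the factors of $\binom{t-1}{n}=\frac{(t-1)(t-2)\cdots(t-n)}{n!}$ are all negative, so $(-1)^n\binom{t-1}{n}>0$ on $(0,1)$. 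Integrating gives $(-1)^nN_n>0$.

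There is no serious obstacle. The only step needing care is the integral identity: it is valid as formal power series, or for $|x|<1$, where coefficient extraction can be interchanged with the integral by uniform convergence. After that, the positivity of the powers is purely combinatorial.
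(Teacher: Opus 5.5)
Your proof is correct, but it reaches the key order-one fact $(-1)^nN_n>0$ by a different route from the paper's. The paper uses the same Schr\"oder-type contour integration it uses throughout. Integrating around the keyhole contour of Figure~\ref{fig:C}, it derives an integral formula for the second-kind numbers $N_n^{(\ell)}$ for every $\ell$. Specializing to $\ell=1$ gives $(-1)^nN_n=\int_1^\infty \frac{(z^n+1)\,dz}{z(\log^2 z+\pi^2)(z+1)^n}>0$. The passage to order $\ell$ is then the same positivity-of-powers remark you make; your version, which isolates the term $\ell\,(-1)^nN_n$, is slightly more explicit.

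Your representation $N_n=\int_0^1\binom{t-1}{n}\,dt$ is the classical one for Cauchy numbers of the second kind. It is entirely real-variable and needs only the uniform bound $|\binom{t-1}{n}|\le 1$ to justify the interchange. It avoids branch cuts and the estimates on the small and large circles, and it makes the sign visible factor by factor. It even removes the power step: from $\bigl(x/((1+x)\log(1+x))\bigr)^\ell=\int_{[0,1]^\ell}(1+x)^{t_1+\cdots+t_\ell-\ell}\,dt$ you get $\tilde N_n^{(\ell)}=\int_{[0,1]^\ell}\binom{t_1+\cdots+t_\ell-\ell}{n}\,dt$, whose integrand has sign $(-1)^n$ on the open cube. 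The same device with exponent $t_1+\cdots+t_\ell-j$ gives the \emph{moreover} part of Theorem~\ref{thm:NorlundType} for all $j\ge\ell$.

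What the paper's contour approach buys is uniformity. Its integral formulas are the template for the eventual-sign results (Theorems~\ref{thm:GregoryGeneralOrder}, \ref{thm:Norlund2}, \ref{thm:TrivarDepth1}, \ref{thm:TrivarDepth2}). There your real integrand is no longer of constant sign: for instance $\binom{s-1}{n}$ with $s\in(1,\ell)$ has sign depending on $\lfloor s\rfloor$. For polylogarithms, no such simple real representation is available at all.
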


We may also define the \emph{higher order N\"orlund numbers ${\tilde N}_n^{(\ell)}$ of the second kind} by
\begin{align}\label{defn-Norlund-second}
\frac{1}{1+x}\cdot \left(\frac{x}{\log(1+x)}\right)^\ell =1+\sum_{n=1}^\infty N_n^{(\ell)}x^n.
\end{align}
By essentially the same ideas used in Section~\ref{sec:GregoryGeneralOrder}, we can prove (see Theorem~\ref{thm:Norlund2}) that the
$N_n^{(\ell)}$ are eventually alternating for any fixed $\ell\ge 1$.

Furthermore, we can even increase the power of $(1+x)$ to any positive integer and define, for any $j,\ell\ge 1$, the
\emph{higher order N\"orlund-type numbers} $N_n^{j,\ell}$ by
\begin{align}\label{defn-Norlund-type}
\frac{1}{(1+x)^j} \left(\frac{x}{\log(1+x)}\right)^\ell =1+\sum_{n=1}^\infty N_n^{j,\ell} x^n.
\end{align}
We will prove in Theorem~\ref{thm:NorlundType} that the power series above is eventually alternating by induction on $j$ with the initial case given by Theorem~\ref{thm:Norlund2}.

\subsection{A trivariate extension}
In recent years, the study of multiple polylogarithms and their special values have attracted the attention of many mathematicians and theoretical physicists due to their important and sometime surprising relations to a number of areas in mathematics and physics (see \cite{ZhaoBook}, in particular the historical notes at the end of each chapter).

For each finite sequence of positive integers $\bfk=(k_1,\dots,k_d)$ (called a \emph{composition} of \emph{weight} $|\bfk|=k_1+\dots+k_d$), we recall that the \emph{(single variable) multiple polylogarithm} is defined by
\begin{equation}\label{equ:Polylog}
\Li_\bfk(z) =\sum_{n_1>\dots>n_d>0} \frac{z^{n_1}}{n_1^{k_1}\dots n_d^{k_d}}    \quad(|z|\le 1, (k_1,z)\ne (1,1)).
\end{equation}
Here, we call $\dep(\bfk):=d$ the depth.
If $\bfk$ is admissible, i.e., $\bfk=(k_1,\dots,k_d)\in \N^d$ with $k_1>1$, then
the \emph{multiple zeta value}
\begin{equation*}
    \zeta(\bfk):=\Li_\bfk(1)=\sum_{n_1>\dots>n_d>0} \frac{1}{n_1^{k_1}\dots n_d^{k_d}} .
\end{equation*}

Noticing that $\log(1-x)=-\Li_1(x)$ is essentially the simplest multiple polylogarithm, for any $j\ge 0$, $\ell\ge 1$ and
$\bfk=(k_1,\dots,k_d)\in \N^d$, we define the numbers $C_{n}^{j;\bfk;\ell}$ by
\begin{equation}\label{equ:trivariateDefn}
 \frac{1}{(1-x)^j}\cdot \left(\frac{x^{\dep(\bfk)}}{\Li_\bfk(x)}\right)^\ell =\sum_{n=0}^\infty C_{n}^{j;\bfk;\ell}x^n.
\end{equation}
where the constant term
\begin{equation*}
C_{0}^{j;\bfk;\ell}=\left(\prod_{m=1}^d (d+1-m)^{k_m}\right)^\ell.
\end{equation*}

By specializing at $\bfk=(1)$ we see that
\begin{equation*}
 \frac{1}{(1-x)^j}\cdot \left(\frac{-x}{\log(1-x)}\right)^\ell =1+\sum_{n=1}^\infty C_{n}^{j;1;\ell}x^n
\end{equation*}
provides a common generalization of the higher order Gregory coefficients
and the higher order N\"orlund-type numbers since
\begin{equation*}
  G_n^{(\ell)}=(-1)^{n} C_{n}^{0;1;\ell}\quad\text{and}\quad
  N_n^{j,\ell}=(-1)^{n} C_{n}^{j;1;\ell}.
\end{equation*}

In fact, from over-whelming evidence, we are confident that the following conjecture should be true.
For convenience, we say a power series $\sum_{n\ge 0} a_n x^n$ has eventually positive (resp. negative)
coefficients if $a_n>0$ (resp. $a_n<0$)  for all $n\gg 0$.

\begin{con}\label{conj:Trivar}
For any fixed integers $j,\ell\ge 1$ and composition $\bfk$, we have
\begin{equation*}
C_{n}^{0;\bfk;\ell}<0 \qquad\text{and}\qquad C_{n}^{j;\bfk;\ell}>0 \quad \forall n\gg 0.
\end{equation*}
Moreover, if $j,\ell\ge 1$ and $\bfk$ is admissible then
\begin{equation*}
 C_{n}^{j;\bfk;\ell}>\binom{n+j-1}{n}\frac{1}{\zeta(\bfk)^{\ell}} \quad \forall n\gg 0.
\end{equation*}
Hence, the Maclaurin series
\begin{equation*}
 -\left(\frac{x^{\dep(\bfk)}}{\Li_\bfk(x)}\right)^\ell  \quad\text{and}\quad
\frac{1}{(1-x)^j}\cdot \left(\frac{x^{\dep(\bfk)}}{\Li_\bfk(x)}\right)^\ell
\end{equation*}
both have \emph{eventually} positive coefficients.
\end{con}

The special case of multiple logarithm (i.e., all components of $\bfk$ are equal to 1) follows easily from
Theorem~\ref{thm:GregoryGeneralOrderA} and Theorem~\ref{thm:NorlundType} (see Corollary~\ref{cor:multilogGregory} and Corollary~\ref{cor:multilogNorlund}). On the other hand, we will prove a more precise version of Conjecture \ref{conj:Trivar} in  the special cases of polylogarithms and double polylogarithms in Theorem~\ref{thm:TrivarDepth1} and Theorem~\ref{thm:TrivarDepth2}, respectively. As a by product, for each zeta value $\zeta(k)$ (resp. double zeta value $\zeta(a,b)$), we find an infinite family of identities expressing its reciprocal as a sum of a rational number $\nu_{n,k}$ (resp. $\nu_{n,a,b}$) and an improper integral such that $1/\zeta(k)=\lim_{n\to\infty} \nu_{n,k}$ (resp. $1/\zeta(a,b)=\lim_{n\to\infty} \nu_{n,a,b}$). As far as we know, this is the first time the reciprocals of these values are approximated by rational numbers nontrivially.

\section{Transformation to integrals}\label{sec:contour}
In this section, we will first express $G_n^{(\ell)}>0$ in terms of some improper integrals by contour integration using complex analysis, motivated by an idea first appeared in \cite{Schr1880}.

To begin with, we set $z=-x-1$ in \eqref{defn-Gregorycoefficientsp} and get
\begin{align*}
1+\sum_{n=1}^\infty (-1)^n G_n^{(\ell)}(z+1)^n=(-1)^\ell \left(\frac{z+1}{\log(z)-\pi i}\right)^\ell, \quad |z+1|<1.
\end{align*}
Consider the path along $C$ as given in Fig. \ref{fig:C}.
\begin{figure}[h]
\begin{center}
\begin{tikzpicture}[scale=0.6]
\node (A) at (-1,0) {};
\node (B) at (2.6,0){};
\node (C) at (0,2.4){};
\draw[->] (-2.1,0) -- (-0.7,0) node[below]  {$C_\eps$} -- (B) node[below] {$x$} ;
\draw[->] (0,-2.1) -- (C) node[left] {$y$} ;
\draw [thick]  (0.22,0.12) arc(20:330:0.3);
\draw [->,thick]  (0.21,0.12)  -- (1,0.12) ;
\draw [thick]  (1,0.12)  -- (2,0.12) ;
\draw [thick]  (2,0.12) arc(4.45:357.55:2);
\draw  [thick]  (1,-0.12)  -- (0.21,-0.12) ;
\draw [->,thick]  (2,-0.12)  -- (1,-0.12) ;
\draw  (1.3,1.6) node[right]  {$C_R$} ;
\draw [->]  (1.402,1.402)  -- (1.401,1.403) ;
\draw [->]  (-0.254,0.253)  -- (-0.253,0.254) ;
\end{tikzpicture}
\end{center}
\caption{Integration contour C with $\eps\to 0^+$ and $R\to \infty$.}
\label{fig:C}
\end{figure}

For any $n>1$ we have
\begin{align*}
(-1)^{n+\ell} G_n^{(\ell)}= &\, \Res_{z=-1} \frac{1}{(z+1)^{n+1}} \left(\frac{z+1}{\log(z)-\pi i}\right)^\ell \\
=&\, \frac{1}{2\pi i} \int  \left(\frac{z+1}{\log(z)-\pi i}\right)^\ell \frac{dz}{(z+1)^{n+1}}\\
=&\, \frac{1}{2\pi i}\left( \int_\eps^R +\int_{C_R}+\int_R^\eps+ \int_{C_\eps} \right) \left(\frac{z+1}{\log(z)-\pi i}\right)^\ell \frac{dz}{(z+1)^{n+1}}.
\end{align*}

Taking $\eps\to 0$ and $R\to \infty$, we obtain
\begin{align*}
(-1)^{n+\ell}  G_n^{(\ell)}=&\, \frac{1}{2\pi i}  \int_0^\infty  \left(\frac{1}{(\log(z)-\pi i)^\ell}- \frac{1}{(\log(z)+\pi i)^\ell}\right) \frac{dz}{(z+1)^{n-\ell+1}}\\
=&\,    \frac{1}{2\pi i} \int_0^\infty  \frac{1}{(\log^2(z)+\pi^2)^\ell} \left( \sum_{j=0}^\ell \binom{\ell}{j}  \log^{\ell-j}(z)(\pi i)^j(1-(-1)^j) \right)\frac{dz}{(z+1)^{n-\ell+1}}\\
=&\,    \int_0^\infty  \frac{1}{(\log^2(z)+\pi^2)^\ell} \left( \sum_{k=1}^{\lceil \ell/2\rceil} (-1)^{k-1} \binom{\ell}{2k-1}
                                            \log^{\ell-2k+1}(z) \pi^{2k-2}  \right)\frac{dz}{(z+1)^{n-\ell+1}}\\
=&\,    \left( \int_1^0-\int_1^\infty \right)   \left( \sum_{k=1}^{\lceil \ell/2\rceil} (-1)^k \binom{\ell}{2k-1}
                                         \frac{\log^{\ell-2k+1}(z) \pi^{2k-2}}{(\log^2(z)+\pi^2)^\ell} \right)\frac{dz}{(z+1)^{n-\ell+1}}.
\end{align*}
Taking $z\to 1/z$ in the integral over $(0,1)$ and then combining the two integrals, we get
\begin{align} \label{equ:GnEllIntegral}
(-1)^{n+\ell}  G_n^{(\ell)}
=&\,     \int_1^\infty   \left( \sum_{k=1}^{\lceil \ell/2\rceil} (-1)^k \binom{\ell}{2k-1}
               \frac{\log^{\ell-2k+1}(z) \pi^{2k-2}}{(\log^2(z)+\pi^2)^\ell}  \right)\frac{((-1)^{\ell} z^{n-\ell-1}-1)dz}{(z+1)^{n-\ell+1}}
\end{align}
for all $n>\ell+1$, which has an easy corollary as follows.

\begin{cor}\label{cor:order2Conj}
We have $G_1^{(2)}=1, G_2^{(2)}=1/12, G_3^{(2)}=0$, and $(-1)^{n} G_n^{(2)}<0$ for all $n\ge 4$.
\end{cor}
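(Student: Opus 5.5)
The plan is to split into the small indices, which I will compute directly, and the range $n\ge 4$, which I will read off from the integral representation \eqref{equ:GnEllIntegral} with $\ell=2$.

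\textbf{Small indices.} I square the classical expansion
\[
\frac{x}{\log(1+x)}=1+\tfrac12 x-\tfrac1{12}x^2+\tfrac1{24}x^3-\cdots
\]
using $G_1,G_2,G_3$ from the introduction. The coefficient of $x$ is $2\cdot\tfrac12=1$. The coefficient of $x^2$ is $\tfrac14-\tfrac16=\tfrac1{12}$. The coefficient of $x^3$ is $2\cdot\tfrac1{24}+2\cdot\tfrac12\cdot\bigl(-\tfrac1{12}\bigr)=0$. So $G_1^{(2)}=1$, $G_2^{(2)}=1/12$ and $G_3^{(2)}=0$.

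\textbf{The range $n\ge 4$.} Formula \eqref{equ:GnEllIntegral} holds for all $n>\ell+1=3$, so it covers exactly this range. For $\ell=2$ we have $\lceil \ell/2\rceil=1$, so the inner sum has the single term $k=1$, namely
\[
(-1)^1\binom{2}{1}\frac{\log z}{(\log^2 z+\pi^2)^2}.
\]
Since $(-1)^{n+2}=(-1)^n$, this gives
\begin{equation*}
(-1)^{n} G_n^{(2)}=-2\int_1^\infty \frac{\log z}{(\log^2 z+\pi^2)^2}\cdot\frac{z^{n-3}-1}{(z+1)^{n-1}}\,dz .
\end{equation*}
On the open interval $(1,\infty)$ we have $\log z>0$ and $(z+1)^{n-1}>0$. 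Because $n-3\ge 1$, also $z^{n-3}-1>0$. So the integrand is strictly positive and continuous there.

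It remains to check that the integral converges. Near $z=1$ the integrand vanishes. As $z\to\infty$ it behaves like $z^{-2}/\log^3 z$, which is integrable. Hence the integral is a strictly positive finite number, and $(-1)^nG_n^{(2)}<0$ for every $n\ge 4$.

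\textbf{Main obstacle.} There is essentially none, since the sign is visible term by term once \eqref{equ:GnEllIntegral} is specialized to $\ell=2$. The only care needed is to track the sign $(-1)^{n+\ell}$ with $\ell=2$ and to confirm that the hypothesis $n>\ell+1$ is exactly $n\ge4$. This also explains why the computation at $n=3$ has to be done separately: it is the one index where the factor $z^{n-3}-1$ vanishes identically.
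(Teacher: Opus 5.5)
Your proof is correct and follows the paper's argument: you specialize \eqref{equ:GnEllIntegral} to $\ell=2$ and note that the integrand $-2\log z\,(z^{n-3}-1)/\bigl((\log^2 z+\pi^2)^2(z+1)^{n-1}\bigr)$ is negative on $(1,\infty)$ for $n\ge 4$. The only difference is at $n=3$, where the paper reads $G_3^{(2)}=0$ off the same integral because the factor $z^{n-3}-1$ vanishes identically. You instead compute $G_3^{(2)}$ directly by squaring the series, which is equally valid and sidesteps the fact that \eqref{equ:GnEllIntegral} is only stated for $n>\ell+1$.
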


\begin{proof}
The first two coefficients are easy to obtain. Assume $n\ge 3$.  Then \eqref{equ:GnEllIntegral} yields that
\begin{align*}
(-1)^n G_n^{(2)}=&\,    \int_1^\infty  \frac{-2\log(z)(z^{n-3}-1)}{(\log^2(z)+\pi^2)^2} \frac{dz}{(z+1)^{n-1}}<0
\end{align*}
for all $n>3$ and $G_3^{(2)}=0$. Thus, $(-1)^{n-1} G_n^{(2)}>0$ for all $n\ge 4$.
\end{proof}

\begin{re}\label{rem:nonZero}
When $\ell=1$, \eqref{equ:GnEllIntegral} gives the important integral expression of the classical Gregory coefficients
\begin{equation}\label{equ:nonZero}
(-1)^{n-1} G_n=\int_1^\infty   \frac{1}{\log^2(z)+\pi^2} \frac{z^{n-2}+1}{(z+1)^n}\, dz
\end{equation}
first obtained by Schr\"oder \cite{Schr1880} (also see \cite{Bl2017}). This is the original motivation for the current research.
\end{re}

\section{Vanishing of higher order Gregory coefficients}
Unlike the classical Gregory coefficients which are all nonzero because of \eqref{equ:nonZero}, we show in this section that
some particular higher order Gregory coefficient always vanishes, for each order $\ell\ge 2$.

\begin{thm}\label{thm:vanishing}
We have  $G_{\ell+1}^{(\ell)}=0$ for all even  $\ell$ and $G_{\ell}^{(\ell)}=0$ for all odd $\ell\ge 3$.
\end{thm}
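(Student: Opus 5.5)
The plan is to bypass the integral representation \eqref{equ:GnEllIntegral} and compute $G_n^{(\ell)}$ directly as a residue, changing variables so that Bernoulli numbers appear. By \eqref{defn-Gregorycoefficientsp}, for $n\ge1$,
\begin{equation*}
G_n^{(\ell)}=\Res_{x=0}\frac{x^{\ell}}{\log^{\ell}(1+x)}\,\frac{dx}{x^{n+1}}
=\Res_{x=0}\frac{(1+x)^{0}\,x^{\ell-n-1}}{\log^{\ell}(1+x)}\,dx .
\end{equation*}
The substitution is $t=\log(1+x)$, that is, $x=e^t-1$ and $dx=e^t\,dt$. This is a local biholomorphism near $0$, so residues are preserved. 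It gives
\begin{equation*}
G_n^{(\ell)}=\Res_{t=0}\frac{e^t\,(e^t-1)^{\ell-n-1}}{t^{\ell}}\,dt
=[t^{\ell-1}]\; e^t(e^t-1)^{\ell-n-1}.
\end{equation*}
This is a Lagrange-inversion type identity. Its only input is the invariance of residues under a change of variable.

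Next I specialize $n$. For $n=\ell+1$ the function to expand is $e^t/(e^t-1)^2=-\frac{d}{dt}\frac{1}{e^t-1}$. Expanding $\frac{1}{e^t-1}=\sum_{m\ge0}B_m t^{m-1}/m!$ and differentiating termwise gives
\begin{equation*}
G_{\ell+1}^{(\ell)}=-[t^{\ell-1}]\sum_{m\ge0}\frac{(m-1)B_m}{m!}t^{m-2}=-\frac{\ell\,B_{\ell+1}}{(\ell+1)!}.
\end{equation*}
This vanishes when $\ell$ is even, since then $\ell+1\ge3$ is odd and $B_{\ell+1}=0$. For $n=\ell$ the function is $e^t/(e^t-1)=1+\frac{1}{e^t-1}$. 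Hence for $\ell\ge2$,
\begin{equation*}
G_\ell^{(\ell)}=[t^{\ell-1}]\frac{1}{e^t-1}=\frac{B_\ell}{\ell!},
\end{equation*}
which vanishes for odd $\ell\ge3$.

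As sanity checks, $\ell=2$ gives $G_3^{(2)}=-2B_3/6=0$, in agreement with Corollary~\ref{cor:order2Conj}. Also, $\ell=1$ gives $G_1=1+B_0\cdot0+\dots$, and computing this carefully yields $1/2$, so the constant term $1$ matters there but not when $\ell\ge2$.

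The only delicate step is justifying the change of variables in the residue, together with the bookkeeping of exponents: the factor $x^{\ell}$ from the numerator combines with $x^{-n-1}$. I expect no real obstacle beyond that. If a referee preferred to avoid residues, I would instead cite Lagrange inversion for the series $x\mapsto\log(1+x)$, or verify the identity through the Cauchy coefficient extraction formula on a small circle. Both routes avoid the WZ method entirely.
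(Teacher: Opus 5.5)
Your proof is correct, and it takes a genuinely different route from the paper.

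\textbf{The paper's route.} It works from the contour-integral representation \eqref{equ:GnEllIntegral}. For even $\ell$, the vanishing of $G_{\ell+1}^{(\ell)}$ is read off because the factor $(-1)^\ell z^{n-\ell-1}-1$ is identically zero at $n=\ell+1$. For odd $\ell=2m-1$, it substitutes $t=\log z$ and then $t=\pi\tan\theta$, evaluates Beta integrals to reduce $G_\ell^{(\ell)}$ to a finite alternating binomial sum, and kills that sum with an explicit WZ telescoping certificate.

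\textbf{Your route.} You use invariance of the residue under the substitution $x=e^t-1$ (formal Lagrange inversion) to get $G_n^{(\ell)}=[t^{\ell-1}]\,e^t(e^t-1)^{\ell-n-1}$ for every $n$. This gives the closed forms $G_\ell^{(\ell)}=B_\ell/\ell!$ for $\ell\ge 2$ and $G_{\ell+1}^{(\ell)}=-\ell B_{\ell+1}/(\ell+1)!$ for $\ell\ge 1$. These check out against known values, e.g.\ $G_2=-1/12$, $G_2^{(2)}=1/12$, $G_3^{(3)}=0$, $G_4^{(3)}=1/240$.

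\textbf{What each buys.} Your argument is shorter and purely algebraic. It explains both vanishing statements uniformly as shadows of $B_{2k+1}=0$ for $k\ge 1$. It also shows that the opposite-parity coefficients ($G_\ell^{(\ell)}$ for even $\ell$, $G_{\ell+1}^{(\ell)}$ for odd $\ell$) are nonzero. Finally, it avoids justifying \eqref{equ:GnEllIntegral} at the boundary indices $n=\ell$ and $n=\ell+1$, which lie outside the range $n>\ell+1$ for which the paper states that formula. The paper's approach has the merit of staying inside the integral framework it needs anyway for Theorem~\ref{thm:GregoryGeneralOrder}, so the even case comes for free. However, it produces no closed form, and the odd case rests on an ad hoc WZ certificate.

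\textbf{A minor blemish.} Your $\ell=1$ sanity check is garbled as written ("$1+B_0\cdot 0+\dots$"). The correct computation is $G_1=[t^0]\bigl(1+\tfrac{1}{e^t-1}\bigr)=1+B_1=1/2$. This is only a side check and does not affect the proof.
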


\begin{proof}
If $\ell$ is even, then $G_{\ell+1}^{(\ell)}=0$ follows immediately from \eqref{equ:GnEllIntegral}.

Now we assume $\ell$ is odd and set $\ell=2m-1$. Then by \eqref{equ:GnEllIntegral}
\begin{align*}
G_{\ell}^{(\ell)}=&\,    \int_1^\infty    \left( \sum_{k=1}^m (-1)^k \binom{2m-1}{2k-1}
                                            \log^{2m-2k}(z) \pi^{2k-2}  \right)\frac{-dz}{z(\log^2(z)+\pi^2)^\ell}\\
=&\,   \int_0^\infty  \left( \sum_{k=1}^m (-1)^k \binom{2m-1}{2k-1}
           t^{2m-2k} \pi^{2k-2}  \right)  \frac{ - dt}{(t^2+\pi^2)^\ell}  \\
=&\,  \frac{-1}{\pi^{2m-1}}   \int_0^\infty  \sum_{k=1}^m (-1)^k \binom{2m-1}{2k-1}
                                             \frac{t^{2m-2k} \, dt}{(t^2+1)^{2m-1}}   \\
=&\,  \frac{-1}{\pi^{2m-1}}   \int_0^{\pi/2}  \sum_{k=1}^m (-1)^k \binom{2m-1}{2k-1}
                                           \frac{ \tan^{2m-2k}\theta  \,  d\theta }{\sec^{4m-4}\theta}   \\
=&\,  \frac{-1}{\pi^{2m-1}}  \sum_{k=1}^m (-1)^k \binom{2m-1}{2k-1}  \int_0^{\pi/2}  \cos^{2m+2k-4}\theta
                                          \sin^{2m-2k}\theta  \, d\theta  \\
=&\,  \frac{-1}{2\pi^{2m-1}}  \sum_{k=1}^m (-1)^k \binom{2m-1}{2k-1}  \Gamma\Big(\frac{2m+2k-3}2\Big)\Gamma\Big(\frac{2m-2k+1}2\Big)/ \Gamma\Big(\frac{4m-4}2+1\Big)  \\
=&\,  \frac{-1}{2\pi^{2m-1}}  \sum_{k=1}^m (-1)^k \binom{2m-1}{2k-1} \frac{(2m+2k-4)!(2m-2k)!}{4^{2m-2} (m+k-2)!(m-k)!(2m-2)!} \\
=&\,  \frac{-(2m-1)}{2^{4m-3}\pi^{2m-2}}  \sum_{k=1}^m  \frac{ (-1)^k (2m+2k-4)!}{ (2k-1)!(m+k-2)!(m-k)! }.
\end{align*}
By WZ method, if we set
\begin{equation*}
     g(k,m)=(2k^2 - 3k + 1)\cdot \frac{(-1)^k(2m + 2k - 4)!}{(2k - 1)!(m + k - 2)!(m - k)!},
\end{equation*}
then for all $1\le k\le m$, we have
\begin{equation*}
     g(k+1,m)-g(k,m)=-(2m^2 - 3m + 1)\cdot  \frac{(-1)^k (2m+2k-4)!}{ (2k-1)!(m+k-2)!(m-k)! }
\end{equation*}
where $g(m+1,m)=0$. Therefore, if $m\ge 2$ then
\begin{align*}
 G_{\ell}^{(\ell)}=&\,  \frac{1}{2^{4m-3}\pi^{2m-2}(m-1)}  \sum_{k=1}^m  \Big(g(k+1,m)-g(k,m)\Big)=0
\end{align*}
since $g(1,m)=0$. This completes the proof of the theorem.
\end{proof}

\section{A key lemma concerning the roots of some polynomials}

To prove Theorem~\ref{thm:GregoryGeneralOrder}, we need the following lemma.

\begin{lem}\label{equ:keyLem}
For all positive integer $\ell$, define the polynomial
\begin{equation*}
    H_{\ell}(x)= \sum_{k=1}^{\lceil \ell/2\rceil} (-1)^{k-1} \binom{\ell}{2k-1} x^{\ell-2k+1}.
\end{equation*}
Then we have
\begin{enumerate}
  \item[\upshape{(i)}] $H_{\ell}(x)$ has exactly $\ell-1$ simple real roots. Among these, $\lfloor (\ell-1)/2 \rfloor$ are positive roots.

  \item[\upshape{(ii)}] Let $\rho_\ell$ be the largest root of $H_{\ell}(x)$. Then the sequence $\{\rho_\ell\}_{\ell\ge 3}$ is strictly
  increasing starting with $\rho_3=\sqrt{3}/3$.
\end{enumerate}

\end{lem}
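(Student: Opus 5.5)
The plan is to recognize $H_\ell$ as the imaginary part of a complex power and read the roots off explicitly. Expanding by the binomial theorem,
\begin{equation*}
(x+i)^\ell=\sum_{j=0}^{\ell}\binom{\ell}{j}x^{\ell-j}i^{j}.
\end{equation*}
The imaginary part comes only from the odd indices $j=2k-1$. For these, $i^{2k-1}=(-1)^{k-1}i$. Hence, for real $x$,
\begin{equation*}
H_\ell(x)=\operatorname{Im}\,(x+i)^\ell .
\end{equation*}
The same identity is already implicit in the computation leading to \eqref{equ:GnEllIntegral}. In particular, $H_\ell$ has degree exactly $\ell-1$, with leading coefficient $\binom{\ell}{1}=\ell$.

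Next I locate the real zeros. For real $x$, write $x+i=r e^{i\theta}$ with $r>0$ and $\theta\in(0,\pi)$. Then $x=\cot\theta$, and the map $\theta\mapsto\cot\theta$ is a strictly decreasing bijection from $(0,\pi)$ onto $\mathbb{R}$. Moreover,
\begin{equation*}
H_\ell(\cot\theta)=r^\ell\sin(\ell\theta),
\end{equation*}
which vanishes exactly when $\theta=j\pi/\ell$ for $j=1,\dots,\ell-1$. This gives $\ell-1$ distinct real roots
\begin{equation*}
x_j=\cot(j\pi/\ell),\qquad j=1,\dots,\ell-1.
\end{equation*}
Since $\deg H_\ell=\ell-1$, these are all the roots of $H_\ell$, and each one is simple. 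A root $x_j$ is positive exactly when $j\pi/\ell<\pi/2$, that is, when $j<\ell/2$. The number of such $j$ in $\{1,\dots,\ell-1\}$ is $\lfloor(\ell-1)/2\rfloor$. This proves (i).

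For (ii), because $\cot$ is decreasing on $(0,\pi)$, the largest root is $\rho_\ell=\cot(\pi/\ell)$. For $\ell\ge 3$ we have $\pi/\ell\in(0,\pi/2)$, and $\pi/\ell$ strictly decreases as $\ell$ grows. Since $\cot$ is strictly decreasing there, $\rho_\ell$ is strictly increasing. The first term is $\rho_3=\cot(\pi/3)=\sqrt3/3$. I expect no genuine obstacle here. The only point needing care is identifying $H_\ell$ with $\operatorname{Im}(x+i)^\ell$, which fixes the sign pattern $(-1)^{k-1}$, together with the degree count that guarantees no roots are missed.
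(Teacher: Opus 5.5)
Your proof is correct, and it takes a genuinely different and much shorter route than the paper.

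The paper never puts $H_\ell$ in closed form. For (i) it builds an explicit Sturm sequence. It proves, by binomial manipulations, a three-term recurrence $x r_j-b_j r_{j-1}=c_j r_{j+1}$ with $b_j,c_j>0$. It then shows inductively that the Sturm remainders are positive multiples of the $r_j$, so all leading coefficients are positive and Sturm's theorem gives $\ell-1$ simple real roots. The count of positive roots comes from $H_\ell$ being even or odd. For (ii) it uses $H_\ell'=\ell H_{\ell-1}$ and interlacing: $\rho_{\ell-1}$ is the largest critical point of $H_\ell$, where $H_\ell<0$, hence $\rho_\ell>\rho_{\ell-1}$.

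Your identification $H_\ell(x)=\operatorname{Im}(x+i)^\ell$ is the same binomial expansion the paper performs in Section~\ref{sec:contour}, but the paper never exploits it. It makes both parts immediate and gives strictly more, namely the roots $\cot(j\pi/\ell)$ explicitly. In particular $\rho_\ell=\cot(\pi/\ell)$. So the cut point in Section~\ref{sec:GregoryGeneralOrder} is $r^{(\ell)}=\exp(\pi\cot(\pi/\ell))$, for example $e^{\pi}\approx 23.14$ for $\ell=4$ and $e^{\pi\sqrt3}\approx 230.76$ for $\ell=6$. Since $\pi\cot(\pi/\ell)=\ell-\pi^2/(3\ell)+O(\ell^{-3})$, this also answers the authors' later question about the growth of $r^{(\ell)}$. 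The ratios $r^{(\ell+1)}/r^{(\ell)}$ decrease to $e$ and are near $3$ only for small $\ell$. The formula also indicates that the entry $5065$ for $\ell=9$ in Table~\ref{tbl:n0bound} should be about $5605$.

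The paper's Sturm-sequence argument is the kind of tool one needs when no closed form is available; here it is unnecessary.
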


\begin{proof}
(i) will be proved by Sturm's theorem (see, e.g., \cite{Dorrie1965}). Set $r_0(x):=H_{\ell}(x)$ and for all $j\ge 1$,
\begin{equation*}
    r_{j}(x)= \sum_{k\ge 1} (-1)^{k-1}  \binom{\ell+j-2}{2k+2j-3} \binom{k+j-2}{j-1} x^{\ell-2k-j+1} .
\end{equation*}

We claim that for all $j\ge 1$ there are \emph{positive} numbers $b_j=b_j(\ell)$ and $c_j=c_j(\ell)$ such that
\begin{equation}\label{equ:Sturmj}
  x r_{j}(x) -b_j r_{j-1}(x) =c_j r_{j+1}(x).
\end{equation}

We now prove claim \eqref{equ:Sturmj} by induction. When $j=1$, we get
\begin{align*}
\ell x\cdot r_1(x) -(\ell -1)  r_0(x)
=&\,\sum_{k\ge 1} (-1)^{k-1}  x^{\ell-2k+1} \left[(\ell-2k+1)\binom{\ell}{2k-1}-(\ell -1)\binom{\ell}{2k-1}\right]\\
=&\,\sum_{k\ge 1} (-1)^{k}  x^{\ell-2k+1}  \binom{\ell}{2k-1}(2k-2) \\
=&\,\sum_{k\ge 1} (-1)^{k-1}  x^{\ell-2k-1} \binom{\ell}{2k+1}2k =2 r_2(x)
\end{align*}
by shifting the index $k\to k+1$. Thus $b_1=1 -1/\ell$, $c_j=2/\ell$, and claim \eqref{equ:Sturmj} holds for $j=1$.
In general, for all $j\ge 2$ we have
\begin{align*}
&\,2(2j-1) x\cdot r_{j}(x) - \frac{(\ell+j-2)(\ell-j)}{j-1}  r_{j-1}(x)\\
=&\, \sum_{k\ge 1}  (-1)^{k} x^{\ell-2k-j+2} \left[  \frac{(\ell+j-2)(\ell-j)}{j-1}  \binom{\ell+j-3}{2k+2j-5} \binom{k+j-3}{j-2} \right.\\
&\, \hskip5cm \left.-2(2j-1)\binom{\ell+j-2}{2k+2j-3} \binom{k+j-2}{j-1}\right]\\
=&\, 2\sum_{k\ge 1}  \frac{(-1)^{k} x^{\ell-2k-j+2}}{\ell+j-1}\binom{\ell+j-1}{2k+2j-3} \binom{k+j-2}{j-1}\\
&\, \hskip4cm  \times \Big[ (\ell-j)(2k+2j-3)-(\ell-2k-j+2)(2j-1) \Big]\\
=&\, 4j \sum_{k\ge 1}  (-1)^{k-1} x^{\ell-2k-j} \binom{\ell+j-1}{2k+2j-1} \binom{k+j-1}{j} \\
=&\, 4j r_{j+1}(x).
\end{align*}
Hence,
\begin{equation*}
 b_j= \frac{(l+j-2)(l-j)}{2(2j-1)(j-1)}>0, \quad c_j= \frac{4j}{2(2j-1)}>0
\end{equation*}
and therefore claim \eqref{equ:Sturmj} is proved.

Next, we set $R_0(x)=H_{\ell}(x)$, $R_1(x)=H_{\ell}'(x)$ and for all $j\ge 1$
\begin{equation*}
     R_{j+1}(x)=-\text{Remainder}\big(R_j(x),R_{j-1}(x)\big)=(q_j x+p_j)\cdot R_j(x)-R_{j-1}(x)
\end{equation*}
as defined by the Sturm algorithm, where $q_j$ and $p_j$ are some rational numbers since this process
is essentially a variant Euclidean algorithm in ${\mathbb Q}[x]$.
We now show by induction again that $p_j=0$ for all $j\ge 1$
and there are positive numbers $a_j=a_j (\ell)$ such that
\begin{equation}\label{equ:Rjx}
     R_j(x)=a_j r_j(x)
\end{equation}
for all $j\ge 0$. It holds clearly for $j=0,1$ by taking $a_0=1$ and $a_1=\ell$ since
\begin{align*}
  R_1(x)=H_{\ell}'(x)=&\, \sum_{k=1}^\ell (-1)^{k-1} \binom{\ell}{2k-1}(\ell-2k+1) x^{\ell-2k}\\
  = &\,  \sum_{k=1}^\ell (-1)^{k-1} \ell \binom{\ell-1}{2k-1} x^{\ell-2k}= \ell r_1(x).
\end{align*}
In general, assume $R_{i}=a_i r_i$ already holds for all $1\le i\le j$. Then
\begin{align*}
     R_{j+1}(x)=&\, (q_j x+p_j)\cdot R_j(x)-R_{j-1}(x) \\
     =&\, q_ja_j \big(b_j r_{j-1}(x) +c_j r_{j+1}(x)\big)+p_j a_j r_j(x)-a_{j-1} r_{j-1}(x) \\
     =&\, (q_ja_jb_j-a_{j-1}) r_{j-1}(x)+p_j a_j r_j(x)+q_ja_j c_j r_{j+1}(x).
\end{align*}
By definition and induction, $\deg R_{j+1}<\deg R_j=\deg r_j=\ell-j-1$ and $\deg r_{j+1},\deg r_j<\deg r_{j-1}$.
This yields immediately $q_ja_jb_j-a_{j-1}=0$ and then $p_j=0$.
Therefore $a_{j+1}=q_ja_j c_j  =a_{j-1}c_j/b_j>0$.

From \eqref{equ:Sturmj} and  \eqref{equ:Rjx}, all the leading coefficients of $R_j(x)$ are positive.
This implies that the sequence
$\{R_j(+\infty)\}_{0\le j\le \ell-1}$ is positive and $\{R_j(-\infty)\}_{0\le j\le \ell-1}$ is alternating.
Moreover, $\gcd(H_{\ell}(x),H_{\ell}'(x))=1$ by the Euclidean algorithm since $r_{\ell-1}(x)=1$ which shows that all roots of $H_{\ell}(x)$ are simple.
Therefore, Sturm's theorem tells us that $H_{\ell}(x)$ has $\ell-1=\deg H_{\ell}$ real roots.
Finally, the last sentence in the lemma holds because $H_{\ell}(x)$ is either an even or odd function depending on whether $\ell$ is odd or even.

(ii) From the proof of (i) we see that all the roots $\{\sigma_1<\cdots<\sigma_{\ell-2}\}$  of $H_{\ell}'(x)=lH_{\ell-1}(x)$ are critical values of $H_\ell(x)$ which has exactly $\ell-1$ roots. Thus $ \sigma_1,\ldots,\sigma_{\ell-2}$ are alternating in sign with the largest value $\sigma_{\ell-2}=\rho_{\ell-1}$ satisfying $H_\ell(\rho_{\ell-1})<0$ since $H_\ell(+\infty)=+\infty$. Therefore, $\rho_\ell>\rho_{\ell-1}$. Further, $H_3(x)=3x^2-1$ and therefore $\rho_3=\sqrt{3}/3$.
\end{proof}

\section{Higher order Gregory coefficients}\label{sec:GregoryGeneralOrder}

We are now ready to prove the following theorem which states that higher order Gregory coefficients are eventually alternating for each order $\ell\ge 1$.

\begin{thm}\label{thm:GregoryGeneralOrder}
For each order $\ell\ge 1$, there is some positive integer $\Gr(\ell)$ such that
\begin{equation*}
(-1)^{n-1} G_n^{(\ell)}>0
\end{equation*}
for all $n\ge \Gr(\ell)$.
\end{thm}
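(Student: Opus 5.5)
Throughout, write $N:=n-\ell-1$. The plan is to start from the integral representation \eqref{equ:GnEllIntegral}, rewrite its bracketed sum in terms of the polynomial $H_\ell$ of Lemma~\ref{equ:keyLem}, and show that for large $n$ the integral is dominated by the range of large $z$, where $H_\ell$ is positive. Since the $k$-th term of the bracketed sum is $(-1)^k\binom{\ell}{2k-1}\log^{\ell-2k+1}(z)\pi^{2k-2}$, setting $t=\log(z)/\pi$ turns the sum into $-\pi^{\ell-1}H_\ell(\log(z)/\pi)$. Multiplying \eqref{equ:GnEllIntegral} by $(-1)^{\ell+1}$ then gives, for $n>\ell+1$,
\begin{equation*}
(-1)^{n-1}G_n^{(\ell)}=\pi^{\ell-1}\int_1^\infty \frac{H_\ell(\log(z)/\pi)}{(\log^2(z)+\pi^2)^\ell}\cdot\frac{z^{N}-(-1)^\ell}{(z+1)^{N+2}}\,dz .
\end{equation*}
For $\ell=1$ we have $H_1=1$ and positivity is immediate; this is Remark~\ref{rem:nonZero}. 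For $\ell\ge2$, Lemma~\ref{equ:keyLem} says that the largest real root of $H_\ell$ is $\rho_\ell$ (with $\rho_2=0$). Since $H_\ell$ has positive leading coefficient $\ell$, it follows that $H_\ell(t)>0$ for all $t>\rho_\ell$.

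Set $Z_0:=e^{\pi\rho_\ell}\ge1$ and split the integral at $Z_0$.

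\textbf{The part $[1,Z_0]$.} Here the integrand may be negative, but the factors $|H_\ell(\log z/\pi)|$ and $(\log^2z+\pi^2)^{-\ell}$ are bounded by a constant $A_\ell$ depending only on $\ell$. We also have
\begin{equation*}
\frac{|z^N\pm1|}{(z+1)^{N+2}}\le 2\left(\frac{z}{z+1}\right)^N\le 2\left(\frac{Z_0}{Z_0+1}\right)^N .
\end{equation*}
Hence this part is at most $2A_\ell Z_0\,q^N$ in absolute value, where $q=Z_0/(Z_0+1)<1$. So it decays exponentially in $n$.

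\textbf{The part $(Z_0,\infty)$.} We have $H_\ell>0$ there. Also $z^N-(-1)^\ell\ge z^N-1\ge0$ for $z\ge1$, so the whole integrand is nonnegative. It therefore suffices to bound this part from below by its restriction to $z\in[N,2N]$, for $N$ large enough that $N>Z_0$. On that interval the following estimates hold:
\begin{itemize}
\item $(z/(z+1))^N\ge (1+1/N)^{-N}\ge e^{-1}$;
\item $z^N-(-1)^\ell\ge \tfrac12 z^N$;
\item $(z+1)^{-2}\ge (2N+1)^{-2}$;
\item $H_\ell(\log z/\pi)\ge c\,(\log N)^{\ell-1}$, since $H_\ell$ has degree $\ell-1$ and positive leading coefficient;
\item $(\log^2z+\pi^2)^{-\ell}\ge c'(\log N)^{-2\ell}$.
\end{itemize}
Multiplying these and integrating over an interval of length $N$ gives a lower bound of order
\begin{equation*}
\frac{1}{N(\log N)^{\ell+1}}
\end{equation*}
for the positive part.

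\textbf{Conclusion.} This lower bound dominates the exponentially small bound $2A_\ell Z_0 q^N$ for all sufficiently large $n$. That threshold is $\Gr(\ell)$. The second assertion of the theorem follows from the definition of eventually alternating coefficients.

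\textbf{Main obstacle.} The delicate step is the lower bound on the positive part: one must notice that the weight $z^N/(z+1)^{N+2}$ spreads its mass out to $z\asymp N$ rather than concentrating near $z=1$. If one instead argued near $z=1$, the negative region of $H_\ell$ would not be controlled. The sign information from Lemma~\ref{equ:keyLem} is then exactly what is needed to locate $Z_0$. All remaining constants depend only on $\ell$, so no uniformity issue in $\ell$ arises.
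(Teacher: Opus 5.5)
Your proof is correct, and it differs from the paper's in the one estimate that matters.

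Both arguments start from \eqref{equ:GnEllIntegral} and split at $r=e^{\pi\rho_\ell}$, beyond which the integrand is nonnegative. The paper then folds $(r,\infty)$ onto $(0,1/r)$ via $z\mapsto 1/z$. It bounds the positive part below only by $\alpha f_n(1/r)$, which is itself exponentially small. This is the same order as its bound $f_n(r)\beta_2(b)=f_n(1/r)\beta_2(b)/r^2$ for the negative contribution near $z=r$. That forces a delicate comparison of constants:
\begin{itemize}
\item a monotonicity analysis of $f_n$ on $[1,r]$, done separately for odd and even $\ell$;
\item a cut point $b\in(2,r)$ with $\beta_2(b)<\alpha r^2$;
\item the identity $r^2f_n(r)=f_n(1/r)$;
\item the bound $r>6$ from Lemma~\ref{equ:keyLem}(ii).
\end{itemize}

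You instead notice that the kernel $z^N/(z+1)^{N+2}$ carries mass of order $1/N$ at $z\asymp N$. In the paper's folded variable this is the region $z\asymp 1/N$, where $f_n\approx e^{-1}$ rather than $f_n(1/r)\approx e^{-N/r}$. So the positive part is at least of order $1/(N\log^{\ell+1}N)$, and the trivial bound $O(q^N)$ on the compact range $[1,Z_0]$ then finishes the job.

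Your route handles both parities and $\ell=2$ uniformly. It uses nothing about $H_\ell$ beyond its positive leading coefficient: Lemma~\ref{equ:keyLem} serves only to name $\rho_\ell$, and any $T$ with $H_\ell>0$ on $(T,\infty)$ would do.

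What the paper's finer bookkeeping with $\alpha$, $\beta_1(b)$, $\beta_2(b)$ buys is good explicit thresholds, namely $n_1(\ell)$ and $n_2(\ell)$ in Tables~\ref{tbl:n0bound} and~\ref{tbl:Newn0bound}. Your argument is also effective, but with its crude constants it would likely give noticeably larger values of $n$.
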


\begin{proof} The case $\ell=1$ is well-known and the case $\ell=2$ is already confirmed by Corollary~\ref{cor:order2Conj} explicitly. We now assume $\ell\ge 3$ and set
\begin{align*}
h(z)=h^{(\ell)}(z):= &\, (-1)^{\ell} \left( \sum_{k=1}^{\lceil \ell/2\rceil} (-1)^k \binom{\ell}{2k-1}
               \log^{\ell-2k+1}(z) \pi^{2k-2}  \right)\cdot  (\log^2(z)+\pi^2)^{-\ell} \\
=&\, (-1)^{\ell-1} \left(\ell \log^{\ell-1}(z)-\binom{\ell}{3} \log^{\ell-3}(z)\pi^2+ \cdots\right) \cdot  (\log^2(z)+\pi^2)^{-\ell}
\end{align*}
and
\begin{equation*}
f_n(z)=f_n^{(\ell)}(z):=\frac{1-(-1)^{\ell} z^{n-\ell-1}}{(z+1)^{n-\ell+1}}.
\end{equation*}
Let $H_\ell(x)$ be the polynomial as defined in Lemma~\ref{equ:keyLem}. Then
\begin{equation*}
h(z)=(-\pi)^{\ell-1}H_{\ell}\big(\log(z)/\pi\big) \cdot (\log^2(z)+\pi^2)^{-\ell}
\end{equation*}
and therefore its largest real root $r=r^{(\ell)}$ satisfies
\begin{equation*}
r=\exp(\pi \rho_\ell)\ge \exp(\pi \rho_3)=\exp(\pi\sqrt{3}/3)\approx 6.13.
\end{equation*}
Hence, $(-1)^{\ell-1} h(z) = h(1/z) >0$ for all $z>r$. By \eqref{equ:GnEllIntegral} we see that for all $n>4$
\begin{equation}\label{equ:2Integrals}
(-1)^{n-1} G_n^{(\ell)}
= \int_1^\infty h(z) f_n(z)\, dz
= \int_1^{r} h(z)  f_n(z)\, dz+\int_0^{1/r} h(z)  f_n(z)\, dz
\end{equation}
by the substitution $z\to 1/z$ in the second integral.

We will proceed to prove that the second integral on the right-hand side of \eqref{equ:2Integrals},
which is always positive, dominates the first. The second integral is easy to estimate
but the first must be cut into two parts and then bounded separately.

We now consider the second integral first. Set
\begin{equation}\label{equ:alpha}
\alpha=\alpha^{(\ell)}:=\int_0^{1/r}  h(z) \, dz>0.
\end{equation}
Note that
\begin{equation*}
     f_n'(z):=\frac{ (-1)^{\ell-1}z^{n-\ell-2}(n-\ell-1-2z)-n+\ell-1}{(z+1)^{n-\ell+2}}.
\end{equation*}
Hence, $f_n'(z)<0$ for all $n>\ell+3$. Indeed, this is obvious if $\ell$ is even. If $\ell$ is odd, then
\begin{equation*}
    z^{n-\ell-2}(n-\ell-1-2z)-n+\ell-1<(n-\ell-1-2z)-n+\ell-1=-2-2z<0.
\end{equation*}
Thus, we get
\begin{equation}\label{equ:2ndIntegralBd}
 \int_0^{1/r} h(z)  f_n(z)  \, dz > f_n(1/r) \int_0^{1/r}  h(z)  \, dz =\alpha f_n(1/r)>0.
\end{equation}

Now we turn to the first integral on the right-hand side of \eqref{equ:2Integrals}.
We have to consider two separate cases depending on the parity of $\ell$.

\medskip\noindent
(i) If $\ell$ is odd then clearly for all $z\in [1, r]$
\begin{equation*}
0< f_n(z)=\frac{z^{n-\ell-1}+1}{(z+1)^{n-\ell+1}}<1
\end{equation*}
and
\begin{equation*}
f_n'(z)=\frac{g_n(z)}{(z+1)^{n-\ell+2}}, \quad\text{where}\quad g_n(z)=z^{n-\ell-2}(n-\ell-1-2z)-n+\ell-1.
\end{equation*}
Since
\begin{equation*}
g_n'(z)=(n-\ell-1)z^{n-\ell-3}(n-\ell-2-2z)>0
\end{equation*}
for all $1\le z\le r< (n-\ell-2)/2$ if $n$ satisfies
\begin{equation}\label{equ:keyRestrict}
n\ge \ell+2r+2,
\end{equation}
$f_n'(z)$ is increasing and has exactly one zero $z_0\in [1,r]$.
We see that $f_n(z)$ attains the minimum at $z_0$ (as $f_n'(1)=-2^{\ell-n}<0$) and $f_n(z)$ achieves maximum at either 1 or $r$.
Moreover, we must have $z_0<2$ since
\begin{equation*}
g_n(2)> 2(n-\ell-5)-n+\ell-1=n-\ell-11>0
\end{equation*}
by \eqref{equ:keyRestrict} as $r>6$. This implies that $f_n(z)$ is increasing on $[2,r]$.
Further, it can be proved easily that
\begin{equation*}
\left(\frac{3}{4}\right)^{n-\ell+1}< \frac14
\end{equation*}
for all $n$ satisfying \eqref{equ:keyRestrict}. Hence,
\begin{equation}\label{equ:fbLargest}
f_n(1)=\frac{1}{2^{n-\ell+1}}<\frac{2^{n-\ell-1}}{3^{n-\ell+1}}<\frac{2^{n-\ell-1}+1}{3^{n-\ell+1}}=f_n(2)<f_n(b)<f_n(r)
\end{equation}
for all $b\in (2,r)$.

We now recall that $h(r)=0$ which implies the existence of some $b\in (2,r)$ such that
\begin{equation*}
\beta_2 (b):= \int_b^r | h(z)| \, dz < \alpha r^2,
\end{equation*}
where $\alpha$ is defined by \eqref{equ:alpha}, which is equivalent to  $\alpha>\beta_2 (b)/r^2$.
Thus,
\begin{align}
\frac {f_n(b)}{f_n(1/r)}=&\, \frac{b^{n-\ell-1}+1}{(b+1)^{n-\ell+1}}\cdot\frac{ (1/r+1)^{n-\ell+1}}{(1/r)^{n-\ell-1}+1}\nonumber\\
=&\, \frac{1}{b^2}  \cdot \frac{(1/b)^{n-\ell-1}+1}{(1/r)^{n-\ell-1}+1} \left(\frac{1/r+1}{1/b+1}\right)^{n-\ell+1}\nonumber\\
<&\,  \frac{2}{b^2}\cdot \left(\frac{1/r+1}{1/b+1}\right)^{n-\ell+1}\nonumber\\
<&\,  \frac{1}{\beta_1 (b)}\left(\alpha-\frac{\beta_2 (b) }{r^2}\right)   \label{equ:geomDecrease}
\end{align}
for all $n\gg 0$ since $1/r+1<1/b+1$, where
\begin{equation*}
\beta_1 (b):= \int_1^b | h(z)| \, dz.
\end{equation*}
By \eqref{equ:fbLargest}, for all sufficiently large $n$, we get
\begin{align*}
\left| \int_1^r h(z) f_n(z)\, dz \right|
< &\,    f_n(b)\beta_1 (b) +  f_n(r)\beta_2 (b) \\
< &\, f_n(1/r) \left(\alpha-\frac{\beta_2 (b)}{r^2} \right)+  f_n(1/r) \frac{\beta_2 (b)}{r^2}
=\alpha f_n(1/r)
\end{align*}
by \eqref{equ:geomDecrease} and the fact that $r^2f_n(r) =f_n(1/r)$.
Combined with \eqref{equ:2Integrals} and \eqref{equ:2ndIntegralBd}, this yields that
\begin{equation}\label{equ:goal}
(-1)^{n-1} G_n^{(\ell)} \ge \alpha f_n(1/r)- \left| \int_1^r h(z) f_n(z)\, dz \right|>0.
\end{equation}

\medskip\noindent
(ii) If $\ell$ is even then  for all $z> 1$
\begin{equation*}
0<F_n(z)=-f_n(z)=\frac{z^{n-\ell-1}-1}{(z+1)^{n-\ell+1}}
<\frac{z^{n-\ell-1}}{(z+1)^{n-\ell+1}}
\le \left(\frac{z}{z+1} \right)^{n-\ell+1}<1.
\end{equation*}
We now show that $F_n(z)$ is increasing on $(1,r]$.
To do this, we notice  that
\begin{equation*}
F_n'(z)=\frac{g_n(z)}{(z+1)^{n-\ell+2}}=\frac{z^{n-\ell-2}(n-\ell-1-2z)+n-\ell+1}{(z+1)^{n-\ell+2}}
\end{equation*}
and
\begin{equation*}
g_n'(z)=(n-\ell-1)z^{n-\ell-3}(n-\ell-2-2z)>0
\end{equation*}
for all $1\le z\le r$ if $n$ satisfies \eqref{equ:keyRestrict}.
Hence, the function $g_n(z)$ is increasing on $[1,r]$. This implies that $g_n(z)>g_n(1)=2n-2\ell-2>0$.
Thus, $F_n'(z)>0$ and therefore $F_n(z)$ is increasing on $[1,r]$ with $F_n(1)=0$.
Hence, by applying the same argument as in  case (i) for odd $\ell$,
after replacing $f_n(z)$ by $F_n(z)$, we may find $2<b<r$ such that $\beta_2 (b)< \alpha r^2$
and
\begin{align*}
\frac {F_n(b)}{f_n(1/r)}=&\, \frac{b^{n-\ell-1}-1}{(b+1)^{n-\ell+1}}\cdot\frac{ (1/r+1)^{n-\ell+1}}{1-(1/r)^{n-\ell-1}}\\
=&\, \frac{1}{b^2}\cdot \frac{1-(1/b)^{n-\ell-1}}{1-(1/r)^{n-\ell-1}} \left(\frac{1/r+1}{1/b+1}\right)^{n-\ell+1}\\
< &\,  \frac{1}{b^2}\cdot \left(\frac{1/r+1}{1/b+1}\right)^{n-\ell+1}\\
<&\, \frac{1}{\beta_1 (b)}\left(\alpha-\frac{\beta_2 (b) }{r^2}\right)
\end{align*}
for all $n\gg 0$ since $1/r+1<1/b+1$. Hence, by exactly the same argument as
in the odd case, we see that \eqref{equ:goal} holds for all sufficiently large $n$.

We have now completed the proof of Theorem~\ref{thm:GregoryGeneralOrder}.
\end{proof}

We can now obtain an interesting corollary of Theorem~\ref{thm:GregoryGeneralOrder} concerning multiple logarithms.
For this purpose, we need to recall an important expression of multiple polyloarithms in terms of iterated path integrals.

In a series of papers \cite{KTChen1954,KTChen1971,KTChen1977}, K. T. Chen systematically developed the theory of iterated path integrals. For any 1-forms $\ga_1,\dots,\ga_w$ on a complex manifold $M$ and a path ${\texttt p}:[0,1]\to M$, we set
\begin{equation*}
\int_{\texttt p} \ga_1,\dots,\ga_w :=\int_{1>t_1>\cdots>t_w>0} {\texttt p}^*(\ga_1)(t_1)\cdots   {\texttt p}^*(\ga_w)(t_w).
\end{equation*}
Using this, for any composition $\bfk=(k_1,\dots,k_d)$ and $z\in\CC\setminus [1,\infty)$, we define
\begin{equation*}
    \Li_\bfk(z)=\int_0^z \left(\frac{dt}{t}\right)^{k_1-1}\frac{dt}{1-t}\cdots \left(\frac{dt}{t}\right)^{k_1-1}\frac{dt}{1-t}
\end{equation*}
where the path from 0 to 1 lies entirely in $\CC\setminus [1,\infty)$. It is easy to see that when $|z|<1$, this definition agrees with that in \eqref{equ:Polylog}.

\begin{cor}\label{cor:multilogGregory}
Let $k\in\N$ and let $\{1\}^k$ denote the sequence of $1$'s repeating $k$ times. Then the Maclarin series
\begin{equation*}
    \frac{-x^k}{\Li_{\{1\}^k}(x)}
\end{equation*}
has eventually positive coefficients, namely, $C_n^{0;\{1\}^k;1}<0$ for all $n\gg 0$.
\end{cor}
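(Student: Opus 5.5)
The plan is to reduce the statement to Theorem~\ref{thm:GregoryGeneralOrder} by means of the classical closed form of the multiple logarithm,
\begin{equation*}
\Li_{\{1\}^k}(x)=\frac{(-\log(1-x))^k}{k!}.
\end{equation*}
This identity follows at once from the iterated integral representation recalled just above. With $k_1=\dots=k_d=1$, every 1-form equals $dt/(1-t)$, and the iterated integral over the simplex $1>t_1>\cdots>t_k>0$ of $k$ copies of the same form is $1/k!$ times the $k$-th power of the single integral $\int_0^x dt/(1-t)=-\log(1-x)$. The same identity also follows by comparing Maclaurin coefficients in \eqref{equ:Polylog}.

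Substituting this closed form gives
\begin{equation*}
\frac{-x^k}{\Li_{\{1\}^k}(x)}=-k!\left(\frac{-x}{\log(1-x)}\right)^k .
\end{equation*}
Next I put $x\to -x$ in \eqref{defn-Gregorycoefficientsp} with $\ell=k$. This yields
\begin{equation*}
\left(\frac{-x}{\log(1-x)}\right)^k=1+\sum_{n\ge1}(-1)^nG_n^{(k)}x^n,
\end{equation*}
which is precisely the relation $G_n^{(\ell)}=(-1)^nC_n^{0;1;\ell}$ stated in the introduction. Comparing coefficients, I obtain
\begin{equation*}
C_n^{0;\{1\}^k;1}=k!\,(-1)^nG_n^{(k)}=-k!\,(-1)^{n-1}G_n^{(k)} .
\end{equation*}
By Theorem~\ref{thm:GregoryGeneralOrder} applied with $\ell=k$, we have $(-1)^{n-1}G_n^{(k)}>0$ for all $n\ge\Gr(k)$. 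Hence $C_n^{0;\{1\}^k;1}<0$, and the coefficients of $-x^k/\Li_{\{1\}^k}(x)$ are positive for all $n\ge \Gr(k)$.

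There is no real obstacle here. The only points needing care are the sign bookkeeping and the normalization of the constant term. The constant term comes out as $k!$, which matches $C_0=\prod_m (d+1-m)=d!$ from the general formula with $d=k$. The substantive input, the eventual alternation of the higher order Gregory coefficients, is supplied entirely by Theorem~\ref{thm:GregoryGeneralOrder}.
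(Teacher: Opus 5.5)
Your proposal is correct and is essentially the paper's proof: $\Li_{\{1\}^k}(x)=\Li_1(x)^k/k!$ via the iterated-integral (shuffle) identity, then $x\to -x$ in the definition of $G_n^{(k)}$, then Theorem~\ref{thm:GregoryGeneralOrder} with $\ell=k$. Your sign bookkeeping $C_n^{0;\{1\}^k;1}=-k!\,(-1)^{n-1}G_n^{(k)}$ is right. You also correctly write $G_n^{(k)}$ where the paper's displayed formula has a stray $G_n^{(\ell)}$.
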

\begin{proof}
By the shuffle relations of iterated integrals (see \cite[(1.5.1)]{KTChen1971} or \cite{Ree1958}), we get
\begin{equation*}
\Li_{\{1\}^k}(x)=\int_0^x \left(\frac{dt}{1-t}\right)^k=\frac{1}{k!} \left(\int_0^x \frac{dt}{1-t}\right)^k=\frac{\Li_1(x)^k}{k!}.
\end{equation*}
Hence,
\begin{align*}
\frac{-x^k}{\Li_{\{1\}^k}(x)}=&\,-k!\left(\frac{x}{\Li_1(x)}\right)^k
=k! \left(-1 +\sum_{n=1}^\infty (-1)^{n-1} G_n^{(\ell)}x^n\right).
\end{align*}
The corollary follows from Theorem~\ref{thm:GregoryGeneralOrder} immediately.
\end{proof}

\section{Bounds on eventual positiveness of higher order Gregory coefficients}

Notice that the proof of  Theorem~\ref{thm:GregoryGeneralOrder} in Section~\ref{sec:GregoryGeneralOrder} is effective and there is some room to choose the value $b$ to cut the second integral on the right-hand side of \eqref{equ:2Integrals}. Using this idea, by Mathematica computation, we are able to find the upper bound $n_1(\ell)$ for $\Gr(\ell)$ for $\ell\le 10$ with the optimal choice of $b$ in Table~\ref{tbl:n0bound}.
\begin{table}[h]
\centering\begin{tabular}{|c|c|c|c|c|c|c|c|c|}
\hline
$\ell$ &  3 & 4 & 5 & 6 & 7 & 8 & 9 & 10   \\
\hline
$r\approx$ &
6.13 & 23.14 & 75.49 & 230.76 & 681.01  & 1967 & 5065  & 15817    \\
\hline
$b$ & 2.6 & 11.3 & 38 & 116 & 355 &1028 & 2925 & 8228   \\
\hline
$n_1(\ell)$ &
26 & 122 & 467 & 1627 & 5365 & 17195 & 54155 & 169344    \\
\hline
\end{tabular}
\caption{Upper bound $n_1(\ell)\ge \Gr(\ell)$.}
\label{tbl:n0bound}
\end{table}

However, these bounds are far from optimal. In reality, for even $\ell$, we actually can obtain much better bounds by searching for $n_2(\ell)$
such that for all $n\ge n_2(\ell)$
\begin{equation*}
\alpha^{(\ell)} -  \int_1^{r} |h(z)|  \frac{1}{z^2}\left(\frac{1/r+1}{1/z+1}\right)^{n-\ell+1} \, dz  >0
\end{equation*}
since this expression is clearly increasing as $n$ increases. Therefore,
\begin{align*}
(-1)^{n-1} G_n^{(\ell)}=&\, \int_1^{r} h(z)  f_n(z)\, dz+\int_0^{1/r} h(z)  f_n(z)\, dz\\
>&\,   f_n(1/r) \int_0^{1/r} |h(z)| \, dz - \left| \int_1^r h(z) f_n(z)\, dz \right|  \\
= &\,f_n(1/r)\left(\alpha^{(\ell)}- \int_1^{r} |h(z)| \frac{1- (1/z)^{n-\ell-1}}{z^2((1/r)^{n-\ell-1}+1)}
    \left(\frac{1/r+1}{1/z+1}\right)^{n-\ell+1}  \, dz \right)\\
>&\, f_n(1/r)\left(\alpha^{(\ell)}- \int_1^{r} |h(z)| \frac{1}{z^2}
    \left(\frac{1/r+1}{1/z+1}\right)^{n-\ell+1}  \, dz \right)>0
\end{align*}
for all $n\ge n_2(\ell)$.

If $\ell$ is odd we can modify the above slightly by breaking $[1,r]$ into two parts.
Namely, we find the smallest $n$ (say $n_2(\ell)$) such that
\begin{equation*}
\alpha^{(\ell)} -  \int_1^2 |h(z)|  \frac{2}{z^2}\left(\frac{1/r+1}{1/z+1}\right)^{n-\ell+1} \, dz
-\int_2^{r} |h(z)|  \frac{1+(1/2)^{n-\ell-1}}{z^2}\left(\frac{1/r+1}{1/z+1}\right)^{n-\ell+1} \, dz  >0.
\end{equation*}
Then \eqref{equ:goal} holds for all $n\ge n_2(\ell)$.

With this improved algorithm, we find the better bounds $n_2(\ell)$ of $\Gr(\ell)$ in Table~\ref{tbl:Newn0bound}.
\begin{table}[h]
\centering\begin{tabular}{|c|c|c|c|c|c|c|c|c|}
\hline
$\ell$ &  3 & 4 & 5 & 6 & 7 & 8 & 9 & 10   \\
\hline
$n_2(\ell)$ &
16 & 52 & 168 & 519 & 1548 & 4514 & 12944 & 36685    \\
\hline
\end{tabular}
\caption{Improved upper bound $n_2(\ell)\ge \Gr(\ell)$.}
\label{tbl:Newn0bound}
\end{table}

In particular, we found the exact values $\Gr(3)=11$, $\Gr(4)=36$, $\Gr(5)=113$, $\Gr(6)=346$, and $\Gr(7)=1030$ by an easy computer search using the new bound $n_2(\ell)$. We can see that the bound in Table~\ref{tbl:n0bound} is not optimal yet, but should be no more than twice of the true value of $\Gr(\ell)$. It would be interesting to find an even better bound in general with perhaps some new ideas so that the precise value $\Gr(\ell)$ can be determined. In particular, we would like to know if the following conjecture always holds.

\begin{con}
For all $\ell\ge 5$ we have $0.47\cdot 3^\ell<\Gr(\ell)<0.477\cdot 3^\ell.$
\end{con}

We also notice that the sequence $\{r^{(\ell)}\}_{\ell\ge 2}$ (of the largest real root of $H_{\ell}\big(\log(z)/\pi\big)$) seems to increase by roughly tripling as $\ell$ increases, similarly to $\{\Gr(\ell)\}_{\ell\ge 2}$. We wonder if there is any theoretical reason for this pattern. By contrast, we have the natural upper bound
\begin{equation*}
r^{(\ell)} \le A^{(\ell)}:=\exp\left(\pi \sqrt{\frac{(\ell-1)(\ell-2)}{6}}\right)
\end{equation*}
since for all odd $k< \lceil \ell/2\rceil$ we have
\begin{align*}
\ &\, \binom{\ell}{2k-1} \frac{(\ell-1)(\ell-1)}{6}-\binom{\ell}{2k+1} \\
= &\, \binom{\ell}{2k-1} \left( \frac{(\ell-1)(\ell-1)}{6}- \frac{(\ell-2k+1)(\ell-2k)}{(2k+1)(2k)} \right)\ge 0.
\end{align*}
However, $\{A^{(\ell)}\}_{\ell\ge 2}$ increases at a rate of about $\exp(\pi \sqrt{6}/6)\approx 3.606$ for all large $\ell$, likely a much faster rate than that of $\{r^{(\ell)}\}_{\ell\ge 2}$.

\section{Higher order N\"olund numbers of the first and second kind}\label{sec:Norlund1}

We now apply the same mechanism used in Section~\ref{sec:GregoryGeneralOrder} to prove some similar results for N\"olund numbers and their higher order generalizations.

\begin{thm}\label{thm:Norlund1}
For each order $\ell\ge 1$ and all $n\ge 1$, we have
\begin{equation*}
(-1)^{n} {\tilde N}_n^{(\ell)}>0.
\end{equation*}
Namely, the Maclaurin series of
\begin{equation*}
\left(\frac{x}{(1+x)\log(1+x)}\right)^\ell
\end{equation*}
has positive coefficients.
\end{thm}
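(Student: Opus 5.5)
The paper calls this "an immediate consequence of the alternating properties of the N\"orlund numbers," so the plan is: establish the case $\ell=1$, then get every order $\ell$ by raising to the $\ell$-th power. Replace $x$ by $-x$. The claim $(-1)^n\tilde N_n^{(\ell)}>0$ for all $n\ge1$ says exactly that
\begin{equation*}
\left(\frac{-x}{(1-x)\log(1-x)}\right)^\ell
\end{equation*}
has positive Maclaurin coefficients in every degree $n\ge1$; its constant term is $1$.

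For $\ell=1$, write $\phi(x)=\frac{-x}{(1-x)\log(1-x)}=1+\sum_{n\ge1}(-1)^nN_nx^n$. I would prove $(-1)^nN_n>0$ for all $n\ge1$ directly, using the contour-integral method of Section~\ref{sec:contour}. Substitute $z=-x-1$ in \eqref{defn-Norlund1}, integrate over the keyhole contour $C$, and take $\eps\to0$, $R\to\infty$. The computation is the one leading to \eqref{equ:GnEllIntegral} with $\ell=1$, except for the extra factor $1/(1+x)=-1/z$.

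The expected outcome is a Schr\"oder-type formula
\begin{equation*}
(-1)^nN_n=\int_0^\infty \frac{1}{\log^2 z+\pi^2}\,\frac{dz}{(1+z)^n}\qquad(n\ge1),
\end{equation*}
whose integrand is clearly positive. Before relying on it, I would check the formula at $n=1$: the integral becomes $\int_{-\infty}^\infty \frac{e^t\,dt}{(t^2+\pi^2)(1+e^t)}=\frac12$, and $N_1=-\tfrac12$, so the two agree.

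An alternative route avoids contours. Note that $\phi=\psi'$ with $\psi(x)=-\log(-\log(1-x)/x)$, and then $\phi(x)=1+\sum_{n\ge1}\frac{n}{n+1}\,(-G_{n+1})\,(-1)^{n+1}x^n\cdots$, so coefficient positivity would follow from the signs of the $G_n$ in \eqref{equ:nonZero}. I would use this route only as a cross-check, since the bookkeeping is messier.

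The main obstacle is carrying out the contour analysis carefully. Three things must be verified:
\begin{itemize}
\item[(a)] the branch and sign conventions on the two sides of the cut;
\item[(b)] the vanishing of the integrals over $C_\eps$ and $C_R$, which requires $n\ge1$ so that $\frac{z+1}{z(\log z-\pi i)}(z+1)^{-n-1}$ decays like $|z|^{-n-1}/\log|z|$ at infinity and is only $O(1/(\eps\log\eps))$ times a length of order $\eps$ near $0$;
\item[(c)] the residue at $z=0$ coming from the extra factor $1/z$.
\end{itemize}
Item (c) needs attention, but $z=0$ is the branch point, and $\eps/(\eps\log\eps)\to0$, so it should contribute nothing.

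With $\ell=1$ settled, the general case is immediate. The series $\phi(x)$ has constant term $1$ and all higher coefficients strictly positive. In $\phi(x)^\ell$, the coefficient of $x^n$ with $n\ge1$ is a sum of products of coefficients of $\phi$, each nonnegative, and it includes the term $\ell\cdot1^{\ell-1}\cdot[x^n]\phi>0$. Hence every coefficient of $\phi(x)^\ell$ in degree $n\ge1$ is positive, which is $(-1)^n\tilde N_n^{(\ell)}>0$.
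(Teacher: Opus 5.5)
Your plan is the paper's: prove the case $\ell=1$ with the keyhole contour of Figure~\ref{fig:C}, then take $\ell$-th powers. Your remark that the degree-$n$ coefficient of $\phi^\ell$ contains $\ell\,[x^n]\phi>0$ is a more careful version of the paper's final sentence. However, the formula you say the contour computation produces is false. The integrand you write in (b) simplifies to $\frac{1}{z(\log z-\pi i)(z+1)^n}$. Its jump across the positive real axis is $\frac{2\pi i}{z(\log^2 z+\pi^2)(z+1)^n}$, so the computation actually gives
\[
(-1)^nN_n=\int_0^\infty\frac{dz}{z(\log^2 z+\pi^2)(1+z)^n}=\int_1^\infty\frac{(z^n+1)\,dz}{z(\log^2 z+\pi^2)(z+1)^n}.
\]
The second form, obtained by $z\mapsto 1/z$ on $(0,1)$, is exactly the one in the paper. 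Your displayed formula has lost the factor $1/z$. What remains is Schr\"oder's formula for $(-1)^{n-1}G_n$ (cf.\ \eqref{equ:nonZero}), so it asserts $(-1)^nN_n=|G_n|$. This already fails at $n=2$: it gives $\tfrac1{12}$, whereas $N_2=1-\tfrac12-\tfrac1{12}=\tfrac5{12}$. Your check at $n=1$ cannot detect the error. After $z=e^t$ the two integrands $\frac{1}{(t^2+\pi^2)(1+e^t)}$ and $\frac{e^t}{(t^2+\pi^2)(1+e^t)}$ are swapped by $t\mapsto -t$, so both integrate to $\tfrac12$; test $n=2$ instead. With the $1/z$ restored the integrand is still positive, so the argument survives and coincides with the paper's proof.

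Your contour-free cross-check is also wrong as written. In fact $\phi=1-x\psi'$, not $\psi'$, and the displayed series does not reproduce the coefficients: for $n=1$ it gives $\tfrac1{24}$ instead of $\tfrac12$. A correct elementary route exists and is shorter than the contour computation:
\begin{itemize}
\item[(1)] Multiplying $\frac{x}{\log(1+x)}$ by $\frac1{1+x}$ gives $N_n=\sum_{k=0}^n(-1)^{n-k}G_k$ with $G_0=1$.
\item[(2)] By the classical alternation \eqref{equ:nonZero}, this becomes $(-1)^nN_n=1-\sum_{k=1}^n|G_k|$.
\item[(3)] For $-1<x<0$ we have $\frac{x}{\log(1+x)}=1-\sum_{k\ge1}|G_k|\,|x|^k$. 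Letting $x\to-1^+$, the left side tends to $0$, so $\sum_{k\ge1}|G_k|=1$.
\end{itemize}
Thus $(-1)^nN_n=\sum_{k>n}|G_k|>0$. This proves the case $\ell=1$ using only the sign pattern of the Gregory coefficients.
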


\begin{proof}
For any $n\ge 1$ we have
\begin{align*}
(-1)^{n+\ell-1}  N_n^{(\ell)} = &\, \Res_{z=-1} \frac{1}{(z+1)^{n+1}}  \frac{(z+1)^\ell}{z(\log(z)-\pi i)^\ell} \\
=&\, \frac{1}{2\pi i} \int  \left(\frac{1}{\log(z)-\pi i}\right)^\ell \frac{dz}{z(z+1)^{n-\ell+1}}\\
=&\, \frac{1}{2\pi i}\left( \int_\eps^R +\int_{C_R}+\int_R^\eps+ \int_{C_\eps} \right) \left(\frac{1}{\log(z)-\pi i}\right)^\ell \frac{dz}{z(z+1)^{n-\ell+1}}
\end{align*}
where the contour here is the same as in Figure~\ref{fig:C}. Note that
\begin{equation*}
     \int_0^{2\pi}  \left(\frac{1}{\log(\varepsilon)+i\theta-\pi i}\right)^\ell \frac{i d\theta}{(\varepsilon e^{i\theta}+1)^{n-\ell+1}} \to 0 \quad\text{as}\quad \varepsilon \to 0.
\end{equation*}
Taking $\eps\to 0$ and $R\to \infty$, we obtain
\begin{align*}
(-1)^{n+\ell-1}  N_n^{(\ell)} =&\, \frac{1}{2\pi i}  \int_0^\infty  \left(\frac{1}{(\log(z)-\pi i)^\ell}- \frac{1}{(\log(z)+\pi i)^\ell}\right) \frac{dz}{z(z+1)^{n-\ell+1}}\\
=&\,    \frac{1}{2\pi i} \int_0^\infty  \frac{1}{(\log^2(z)+\pi^2)^\ell} \left( \sum_{j=0}^\ell \binom{\ell}{j}  \log^{\ell-j}(z)(\pi i)^j(1-(-1)^j) \right)\frac{dz}{z(z+1)^{n-\ell+1}}\\
=&\,    \int_0^\infty  \frac{1}{(\log^2(z)+\pi^2)^\ell} \left( \sum_{k=1}^{\lceil \ell/2\rceil} (-1)^{k-1} \binom{\ell}{2k-1}
                                            \log^{\ell-2k+1}(z) \pi^{2k-2}  \right)\frac{dz}{z(z+1)^{n-\ell+1}}\\
=&\,    \left( \int_1^0-\int_1^\infty \right)   \left( \sum_{k=1}^{\lceil \ell/2\rceil} (-1)^k \binom{\ell}{2k-1}
                                         \frac{\log^{\ell-2k+1}(z) \pi^{2k-2}}{(\log^2(z)+\pi^2)^\ell} \right)\frac{dz}{z(z+1)^{n-\ell+1}}.
\end{align*}
Taking $z\to 1/z$ in the integral over $(0,1)$ and then combining the two integrals, we get
\begin{align*}
(-1)^{n+\ell-1}  N_n^{(\ell)}
=&\,     \int_1^\infty   \left( \sum_{k=1}^{\lceil \ell/2\rceil} (-1)^k \binom{\ell}{2k-1}
               \frac{\log^{\ell-2k+1}(z) \pi^{2k-2}}{(\log^2(z)+\pi^2)^\ell}  \right)\frac{((-1)^{\ell} z^{n-\ell+1}-1)dz}{z(z+1)^{n-\ell+1}}.
\end{align*}
When $\ell=1$ we see that
\begin{align*}
(-1)^{n} N_n  = \int_1^\infty   \frac{( z^{n}+1)dz}{z(\log^2(z)+\pi^2)(z+1)^{n}} >0.
\end{align*}
This implies that all the coefficients of
\begin{equation*}
\frac{-x}{(1-x)\log(1-x)}=1+\sum_{n=1}^\infty (-1)^{n} N_n \frac{x^n}{n!}
\end{equation*}
are positive and therefore Theorem~\ref{thm:Norlund1} follows immediately since any positive power of a series with positive coefficient will still have positive coefficients.
\end{proof}

\begin{thm}\label{thm:Norlund2}
For each order $\ell\ge 1$ and all $n\ge 1$, there is a positive integer $\Nor(\ell)$ such that
\begin{equation*}
(-1)^{n} N_n^{(\ell)}>0
\end{equation*}
for all $n\ge \Nor(\ell)$.
\end{thm}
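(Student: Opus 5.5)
Here $N_n^{(\ell)}$ is defined by \eqref{defn-Norlund-second}. The plan is to repeat the contour-integration argument of Section~\ref{sec:contour} for the generating function $\frac{1}{1+x}(x/\log(1+x))^\ell$ and then run the domination argument from the proof of Theorem~\ref{thm:GregoryGeneralOrder} with a new weight function.

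\emph{Integral representation.} Substituting $z=-x-1$ and taking the residue at $z=-1$, I would write $(-1)^{n+\ell-1}N_n^{(\ell)}$ as $\frac{1}{2\pi i}\int (\log z-\pi i)^{-\ell}\,\frac{dz}{(z+1)^{n-\ell+2}}$ over the contour of Figure~\ref{fig:C}. For $n$ large the contributions of $C_R$ and $C_\eps$ vanish. Then I would fold $(0,1)$ onto $(1,\infty)$ via $z\to 1/z$, exactly as in the derivation of \eqref{equ:GnEllIntegral}. The only change from \eqref{equ:GnEllIntegral} is that the exponent $n-\ell+1$ becomes $n-\ell+2$. This should give
\begin{equation*}
(-1)^{n-1}N_n^{(\ell)}=\int_1^\infty h(z)\,\tilde f_n(z)\,dz,\qquad
\tilde f_n(z)=\frac{1-(-1)^{\ell}z^{n-\ell}}{(z+1)^{n-\ell+2}},
\end{equation*}
where $h=h^{(\ell)}$ is the same function as in the proof of Theorem~\ref{thm:GregoryGeneralOrder}. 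I will track the overall sign carefully. Heuristically, the factor $1/(1+x)$ multiplies the coefficients by roughly $(-1)^n$-alternating partial sums, and this suggests the sign flips relative to $G_n^{(\ell)}$, i.e.\ the target is $(-1)^nN_n^{(\ell)}>0$. I would confirm this with $\ell=1$: there the integrand is $\frac{z^{n-1}+1}{(\log^2 z+\pi^2)(z+1)^{n+1}}>0$ up to the global sign. The case $\ell=1$ itself is then settled at once.

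\emph{Splitting and the easy piece.} For $\ell\ge2$, let $r=\exp(\pi\rho_\ell)$ be the largest root of $h$ (Lemma~\ref{equ:keyLem}), so that $h$ has constant sign on $(r,\infty)$. Exactly as in \eqref{equ:2Integrals}, I would split the integral into $\int_1^r h\tilde f_n$ and $\int_0^{1/r}h\tilde f_n$. The key identity is now $\tilde f_n(1/z)=z^{2}\tilde f_n(z)\cdot(-1)^{\ell}$ up to sign, which replaces $r^2f_n(r)=f_n(1/r)$. I would check that $\tilde f_n$ is monotone decreasing on $(0,1/r)$ for $n\gg0$, by the same derivative computation as before. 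This gives the lower bound $\alpha\,\tilde f_n(1/r)$ for the second piece, with $\alpha=\int_0^{1/r}|h|$.

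\emph{The main obstacle: the piece on $[1,r]$.} I would bound it using $|\tilde f_n(z)|\lesssim z^{-2}\bigl(\tfrac{z}{z+1}\bigr)^{n-\ell+2}$. For $z\le b<r$ the ratio $\tilde f_n(z)/\tilde f_n(1/r)$ decays geometrically, like $\bigl(\frac{1+1/r}{1+1/b}\bigr)^{n}$, and this controls $\int_1^b$. Near $r$, I would choose $b$ so that $\int_b^r|h|<\alpha r^2$, and handle $\int_b^r$ using the monotonicity of $\tilde f_n$ on $[2,r]$ (odd $\ell$) or on $[1,r]$ (even $\ell$). These monotonicity facts follow from the same $g_n$ analysis under a restriction like \eqref{equ:keyRestrict}. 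Combining the two estimates gives $\bigl|\int_1^r h\tilde f_n\bigr|<\alpha\tilde f_n(1/r)$ for $n\gg0$, hence the claimed sign.

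I expect the delicate points to be the bookkeeping of the overall sign, and confirming that the extra factor $(z+1)^{-1}$ does not spoil the monotonicity on $[1,r]$. If the odd-$\ell$ monotonicity fails near $z=1$, I would simply bound $|\tilde f_n|\le 2z^{-2}(z/(z+1))^{n-\ell+2}$ there, which still gives geometric decay relative to $\tilde f_n(1/r)$.
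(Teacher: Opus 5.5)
Your plan has the right architecture: it is what the paper means when it says the argument of Section~\ref{sec:GregoryGeneralOrder} carries over. However, the integral representation everything rests on is wrong. Under $z=-x-1$ we have $1+x=-z$, so the extra factor $\frac{1}{1+x}$ becomes $-\frac1z$, not one more power of $\frac{1}{z+1}$. The correct kernel is $\frac{dz}{z(z+1)^{n-\ell+1}}$; compare the residue computation in the proof of Theorem~\ref{thm:Norlund1}. Your kernel $\frac{dz}{(z+1)^{n-\ell+2}}$ is exactly the one for $G_{n+1}^{(\ell)}$. Indeed $\tilde f_n=f_{n+1}$, so your formula asserts $(-1)^{n-1}N_n^{(\ell)}=(-1)^nG_{n+1}^{(\ell)}$, i.e.\ $N_n^{(\ell)}=-G_{n+1}^{(\ell)}$. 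This already fails for $\ell=n=1$, since $N_1=-\tfrac12$ while $-G_2=\tfrac1{12}$. Your $\ell=1$ sanity check actually exposes the problem: the integrand you obtain is Schr\"oder's \eqref{equ:nonZero} for $(-1)^nG_{n+1}$. Its positivity would give $(-1)^{n-1}N_n>0$, the opposite of the statement. So this is not a global sign that careful bookkeeping can repair. Carried out as written, your argument just reproves Theorem~\ref{thm:GregoryGeneralOrder} with the index shifted by one and says nothing about $N_n^{(\ell)}$.

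With the correct kernel, the folding gives
\begin{equation*}
(-1)^{n}N_n^{(\ell)}=\int_1^\infty h(z)\,\hat f_n(z)\,dz,\qquad \hat f_n(z)=\frac{1-(-1)^{\ell}z^{n-\ell+1}}{z(z+1)^{n-\ell+1}},
\end{equation*}
and the target sign appears directly. Your domination scheme then does go through. First, $\hat f_n(1/z)=(-1)^{\ell-1}z^2\hat f_n(z)$, so the tail folds to $\int_0^{1/r}h\hat f_n$ exactly as in \eqref{equ:2Integrals}. Second, $\hat f_n$ is positive and decreasing on $(0,1/r)$ for both parities of $\ell$, which gives the lower bound $\alpha\hat f_n(1/r)$; its new $1/z$ singularity at $0$ is integrable against $h(z)\sim \ell/|\log z|^{\ell+1}$. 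Third, on $[1,r]$ the ratio $|\hat f_n(z)|/\hat f_n(1/r)$ is at most a constant times $\frac1z\bigl(\frac{1+1/r}{1+1/z}\bigr)^{n-\ell+1}$, so the same choice of $b$ and the same monotonicity analysis near $r$ finish the proof. A shortcut is also worth noting: differentiating $\log^{-\ell}(1+x)$ gives $N_n^{(\ell+1)}=-\frac{n-\ell}{\ell}G_n^{(\ell)}$. This deduces the theorem for all orders $\ge 2$ directly from Theorem~\ref{thm:GregoryGeneralOrder}; order $1$ is the positive integral above. It also explains why the entries $\Nor(\ell+1)$ in Table~\ref{tbl:NewNorBound} coincide with the values of $\Gr(\ell)$ found for $3\le \ell\le 7$.
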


\begin{proof}
The proof of Theorem~\ref{thm:Norlund2} follows
a similar argument as used in Section~\ref{sec:GregoryGeneralOrder}. We thus leave the details to the interested reader.
\end{proof}

We list some of the first few values of $\Nor(\ell)$ in Table~\ref{tbl:NewNorBound}.

\begin{table}[h]
\centering\begin{tabular}{|c|c|c|c|c|c|c|c|c|}
\hline
$\ell$ & 1 & 2 &3 & 4 & 5 & 6 & 7 & 8   \\
\hline
$\Nor(\ell)$ &
1 & 2 & 4  & 11  &  36 &  113 &  346  & 1030    \\
\hline
\end{tabular}
\caption{Values of $\Nor(\ell)$ for $\ell\le 8$.}
\label{tbl:NewNorBound}
\end{table}

\section{More general higher order N\"olund-type numbers}\label{sec:NorlundGeneral}
In  this section we first prove some general preliminary results that can be applied to any power series with (eventually) positive coefficients. Then we prove in Theorem~\ref{thm:NorlundType} that the higher order N\"olund-type numbers $N_n^{j,\ell}$ defined by \eqref{defn-Norlund-type} are eventually alternating for any fixed $j,\ell\ge 1$.

\begin{lem}\label{lem:eventualPos}
Suppose a power series $f(x)=\sum_{n\ge 0} a_n x^n$ has positive (resp. eventually positive) coefficients and $f(1)$ diverges. Then
the power series
\begin{equation*}
g(x)=\frac{f(x)}{1-x}= \sum_{n\ge 0} \left(\sum_{j=0}^n a_j \right) x^n
\end{equation*}
also has positive (resp. eventually positive) coefficients.
\end{lem}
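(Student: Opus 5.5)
The plan is to read off the coefficients of $g$ directly from the Cauchy product with $1/(1-x)=\sum_{n\ge0}x^n$, which gives the displayed identity $g(x)=\sum_{n\ge0}S_nx^n$ with $S_n:=\sum_{j=0}^n a_j$. The positive case is then immediate: if every $a_j>0$, each partial sum $S_n$ is a finite sum of positive terms and so $S_n>0$ for all $n$. The hypothesis that $f(1)$ diverges is not needed here.

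For the eventually positive case, I will fix $N$ with $a_n>0$ for all $n\ge N$. Since the $a_n$ are eventually positive, the series $f(1)=\sum_n a_n$ has only finitely many negative terms. Its divergence therefore means exactly that $S_n\to+\infty$: a series whose tail consists of positive terms either converges or tends to $+\infty$. Hence there is $M\ge N$ with $S_M>0$.

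For $n\ge M$ I then have $S_n=S_M+\sum_{j=M+1}^n a_j\ge S_M>0$, because every added term has index at least $N$ and is positive. So the coefficients of $g$ are positive for all $n\ge M$, as claimed.

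The only real point is the middle step, where divergence of $f(1)$ must be read as divergence to $+\infty$. This is what guarantees that the finitely many possibly negative initial coefficients are eventually outweighed. Once that is noted, monotonicity of $S_n$ beyond $N$ finishes the argument, and no step requires any earlier result of the paper.
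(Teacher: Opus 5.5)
Your proof is correct and follows essentially the same route as the paper: the coefficients of $g$ are the partial sums $S_n=\sum_{j\le n}a_j$, these increase once $a_n>0$, and divergence of $f(1)$ forces $S_n\to+\infty$, so $S_n>0$ eventually. You are slightly more explicit than the paper on two points, namely why divergence means divergence to $+\infty$ and that the all-positive case does not need that hypothesis; the paper additionally records that $g(1)$ diverges, which it needs to iterate the lemma in Lemma~\ref{lem:fjInduction}.
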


\begin{proof}
Assume $g(x)=\sum_{n\ge 0} b_n x^n$. Suppose $f(x)$ has eventually positive coefficients, namely, $a_n>0$ for all $n\ge n_0$. Then the sequence
\begin{equation}\label{equ:bn's}
 \bigg\{b_n \bigg\}_{n>n_0}=  \bigg\{\sum_{j=0}^n a_j \bigg\}_{n\ge n_0}
\end{equation}
is increasing and diverges to $\infty$ by the condition on $f(1)$.
Therefore, $g(x)$ has eventually positive coefficients and $g(1)$ diverges to $\infty$.

Moreover, if $f(x)$ has positive coefficients, namely, $n_0=0$ in the above, then $b_n>0$ for all $n\ge n_0$ by \eqref{equ:bn's}.
Hence, $g(x)$ has positive coefficients,
\end{proof}

\begin{thm} \label{thm:NorlundType}
For each order $\ell\ge 1$ and $j\ge 1$, we have
\begin{equation*}
(-1)^{n} N_n^{j,\ell}>0
\end{equation*}
for all $n\gg 0$. Namely, the Maclaurin series of
\begin{equation*}
\frac{1}{(1+x)^j} \left(\frac{x}{\log(1+x)}\right)^\ell
\end{equation*}
has eventually alternating coefficients. Moreover, for all $j\ge \ell\ge 1$, we have
\begin{equation*}
(-1)^{n} N_n^{j,\ell}>0
\end{equation*}
for all $n\ge 0$.
\end{thm}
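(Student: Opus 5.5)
We substitute $x\to -x$ throughout, so that the claim $(-1)^nN_n^{j,\ell}>0$ becomes the statement that the Maclaurin series
\begin{equation*}
\Phi_{j,\ell}(x):=\frac{1}{(1-x)^j}\left(\frac{-x}{\log(1-x)}\right)^\ell=\sum_{n\ge 0}(-1)^nN_n^{j,\ell}x^n
\end{equation*}
has eventually positive coefficients, and has positive coefficients when $j\ge \ell$. Since $\Phi_{j+1,\ell}(x)=\Phi_{j,\ell}(x)/(1-x)$, the natural tool is Lemma~\ref{lem:eventualPos}, applied inductively on $j$.

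\textbf{First assertion.} The base case $j=1$ is exactly Theorem~\ref{thm:Norlund2}: the coefficients $(-1)^nN_n^{(\ell)}=(-1)^nN_n^{1,\ell}$ are positive for $n\ge\Nor(\ell)$. To apply Lemma~\ref{lem:eventualPos} we also need $\Phi_{j,\ell}(1)$ to diverge, meaning the partial sums of the coefficients tend to $+\infty$. For $j\ge 1$ this holds because, as $x\to1^-$, we have $\Phi_{j,\ell}(x)\sim (1-x)^{-j}(-\log(1-x))^{-\ell}\to+\infty$ (a power of $1-x$ beats any power of the logarithm). For a series with eventually positive coefficients, divergence of $f(x)$ as $x\to1^-$ forces the coefficient sum to be infinite, by Abel's theorem applied to the tail. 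The lemma then gives eventual positivity for $j=2$, and its proof shows that $g(1)$ diverges again. So induction on $j$ yields eventual positivity of $\Phi_{j,\ell}$ for every $j\ge1$.

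\textbf{Second assertion ($j\ge\ell$).} Write
\begin{equation*}
\Phi_{j,\ell}(x)=\frac{1}{(1-x)^{j-\ell}}\left(\frac{-x}{(1-x)\log(1-x)}\right)^\ell .
\end{equation*}
By Theorem~\ref{thm:Norlund1} (after $x\to -x$), the second factor has all coefficients positive; in the notation there, these are $(-1)^n\tilde N_n^{(\ell)}>0$ together with constant term $1$. The factor $(1-x)^{-(j-\ell)}=\sum_n\binom{n+j-\ell-1}{n}x^n$ has nonnegative coefficients with constant term $1$. The product of a series with positive coefficients and one with nonnegative coefficients and positive constant term has all coefficients positive. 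Equivalently, one can apply the "positive" case of Lemma~\ref{lem:eventualPos} $j-\ell$ times. Hence $(-1)^nN_n^{j,\ell}>0$ for all $n\ge0$.

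\textbf{Main obstacle.} The only delicate point is the divergence hypothesis of Lemma~\ref{lem:eventualPos} at the base step, since Theorem~\ref{thm:Norlund2} itself is taken as given. Two routes handle it. One is the asymptotic growth argument above via an Abelian argument. The other is to note that eventually positive coefficients plus the fact that $\Phi_{1,\ell}(x)\to\infty$ as $x\to1^-$ already suffice, because a convergent coefficient sum would make the function bounded near $1$. I would write out this short Abelian step explicitly.
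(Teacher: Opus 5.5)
Your argument is correct and is essentially the paper's proof. For the eventual statement it is induction on $j$ through Lemma~\ref{lem:eventualPos} starting from Theorem~\ref{thm:Norlund2}; for the statement valid for all $n$ it is Theorem~\ref{thm:Norlund1} (the case $j=\ell$) combined with the nonnegative coefficients of $(1-x)^{-(j-\ell)}$, which is equivalent to the paper's induction on $j\ge\ell$. The one difference is the divergence of the coefficient sum of $\Phi_{1,\ell}$: you get it self-containedly from $\Phi_{1,\ell}(x)\to+\infty$ as $x\to1^-$ via an Abelian argument, whereas the paper cites coefficient asymptotics $\sim 1/\log n$, which is the $\ell=1$ rate (for $\ell\ge2$ the rate is $(\log n)^{-\ell}$, still giving divergence).
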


We notice that the cases $j=0$ and $j=1$ of Theorem~\ref{thm:NorlundType} are equivalent to Theorem~\ref{thm:GregoryGeneralOrder} and Theorem~\ref{thm:Norlund2}, respectively. To prove the first part of Theorem~\ref{thm:NorlundType}, we apply induction on $j\ge 1$ to prove the following more exact and stronger statement. To prove the last part of Theorem~\ref{thm:NorlundType},  we only need induction on $j\ge \ell$ with the initial case $j=\ell$ given by  Theorem~\ref{thm:Norlund1}.

\begin{lem} \label{lem:fjInduction}
Fix $\ell\ge 0$. For any $j\ge 1$ define
 \begin{align*}
f_j(x):=\frac{1}{(1-x)^j}\cdot \left(\frac{-x}{\log(1-x)}\right)^\ell =\sum_{n=1}^\infty a_{j,n} x^n.
\end{align*}
Then $f_j(x)$ has eventually positive coefficients and $f_j(1)$ diverges to $\infty$.
\end{lem}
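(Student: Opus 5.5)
We prove Lemma~\ref{lem:fjInduction} by induction on $j\ge 1$, feeding each step into Lemma~\ref{lem:eventualPos}. The identity behind the induction is
\begin{equation*}
f_{j+1}(x)=\frac{f_j(x)}{1-x}.
\end{equation*}
So once $f_j$ is known to have eventually positive coefficients and $f_j(1)$ diverges to $\infty$, Lemma~\ref{lem:eventualPos} gives both properties for $f_{j+1}$. Its proof already shows that the partial sums $b_n$ increase to $\infty$, so $g(1)=\infty$. The whole lemma therefore reduces to the base case $j=1$.

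For the base case $j=1$ we have
\begin{equation*}
f_1(x)=\frac{1}{1-x}\left(\frac{-x}{\log(1-x)}\right)^\ell .
\end{equation*}
Replacing $x$ by $-x$ turns this into the generating function \eqref{defn-Norlund-second}, so $a_{1,n}=(-1)^nN_n^{(\ell)}$. Theorem~\ref{thm:Norlund2} then gives $a_{1,n}>0$ for all $n\ge \Nor(\ell)$.

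The degenerate case $\ell=0$ is trivial: $f_j=(1-x)^{-j}$ has coefficients $\binom{n+j-1}{n}>0$.

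It remains to see that $f_1(1)$ diverges to $\infty$. I would argue that $\sum_n a_{1,n}$ has only finitely many negative terms, so it either converges to a finite limit or diverges to $+\infty$. If it converged, Abel's theorem would force $f_1(x)$ to stay bounded as $x\to1^-$. But as $x\to 1^-$,
\begin{equation*}
f_1(x)=\frac{1}{(1-x)\,(-\log(1-x))^\ell}\,x^\ell\to+\infty,
\end{equation*}
because $(1-x)\log^\ell(1-x)\to0$. This is a contradiction, so the series diverges to $+\infty$.

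An alternative route is to apply Lemma~\ref{lem:eventualPos} to $f_0(x)=(-x/\log(1-x))^\ell$ directly. By Theorem~\ref{thm:GregoryGeneralOrder} its coefficients are eventually negative, so this route does not apply, and that is why the base case is $j=1$.

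The main obstacle is the divergence of $f_1(1)$; more precisely, one must make sure that the eventual positivity from Theorem~\ref{thm:Norlund2} together with the blow-up of $f_1(x)$ as $x\to1^-$ really forces $\sum_n a_{1,n}=+\infty$. The Abel/Tauberian remark above handles this, since a series whose terms are eventually positive and whose Abel sum is infinite must diverge to $+\infty$. The rest is bookkeeping of the induction. The ``moreover'' part of Theorem~\ref{thm:NorlundType} would follow the same way, starting at $j=\ell$ with Theorem~\ref{thm:Norlund1} and using the positive-coefficient version of Lemma~\ref{lem:eventualPos}.
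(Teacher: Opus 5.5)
Your proof is correct. It has the same skeleton as the paper: the base case $j=1$ comes from Theorem~\ref{thm:Norlund2} via $a_{1,n}=(-1)^nN_n^{(\ell)}$, and the step $j\to j+1$ is Lemma~\ref{lem:eventualPos} applied to $f_{j+1}=f_j/(1-x)$.

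The one real difference is how you show that $f_1(1)$ diverges. The paper imports the asymptotic $a_{1,n}\sim 1/\log n$ from Blagouchine and concludes by comparison with $\sum 1/\log n$. You argue Abelian-style instead:
\begin{itemize}
\item eventual positivity makes the partial sums eventually monotone;
\item if they converged, Abel's theorem (or simply bounding $\sum a_{1,n}x^n$ on $(0,1)$) would keep $f_1(x)$ bounded as $x\to1^-$;
\item this contradicts $f_1(x)=x^\ell/\bigl((1-x)(-\log(1-x))^\ell\bigr)\to+\infty$.
\end{itemize}

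What each route buys:
\begin{itemize}
\item Your route is self-contained and uniform in $\ell$. It needs nothing beyond the eventual positivity already in hand, and it also covers the trivial case $\ell=0$, which Theorem~\ref{thm:Norlund2} does not.
\item The paper's route gives a growth rate. However, the rate it quotes, $1/\log n$, is the order-one asymptotic. For general $\ell$, singularity analysis gives $a_{1,n}\sim(\log n)^{-\ell}$. This is still non-summable, so the paper's conclusion survives, but your argument never depends on getting that asymptotic right.
\end{itemize}
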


\begin{proof}
Theorem~\ref{thm:Norlund2} implies that $f_1(x)$ has eventually positive coefficients.
Further, from \cite[(16) and (19)]{Bl2016} we obtain that
\begin{equation*}
     a_{1,n}=(-1)^n N_n^{(\ell)} \sim \frac{1}{\log n}.
\end{equation*}
Therefore, $f_1(1)$ diverges to $\infty$.

For general $j$, if we assume $f_j(x)$ satisfies the conclusion of Lemma~\ref{lem:fjInduction} then
it satisfies the conditions of Lemma~\ref{lem:eventualPos}. Therefore
\begin{align*}
f_{j+1}(x):=\frac{1}{1-x}f_j(x)
\end{align*}
has eventually positive coefficients and $f_{j+1}(1)$ diverges to $\infty$ by Lemma~\ref{lem:eventualPos}.
This completes the proof of Lemma~\ref{lem:fjInduction}.
\end{proof}

\begin{cor}\label{cor:multilogNorlund}
For any integers $j,k\ge 1$, the Maclarin series
\begin{equation*}
    \frac{-x^k}{(1-x)^j \Li_{\{1\}^k}(x)}
\end{equation*}
has positive coefficients, namely, $C_n^{j;\{1\}^k;1}>0$ for all $n\ge 0$.
\end{cor}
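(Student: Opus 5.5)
The plan is to reduce the statement to the higher order N\"orlund-type numbers via the shuffle identity used in Corollary~\ref{cor:multilogGregory}. That identity gives $\Li_{\{1\}^k}(x)=\Li_1(x)^k/k!$ with $\Li_1(x)=-\log(1-x)$. Hence
\begin{equation*}
\frac{x^k}{(1-x)^j\Li_{\{1\}^k}(x)}=k!\,\frac{1}{(1-x)^j}\left(\frac{-x}{\log(1-x)}\right)^k ,
\end{equation*}
so up to the normalizing factor $k!$ (and the overall sign convention fixed by \eqref{equ:trivariateDefn}, whose constant term $C_0^{j;\{1\}^k;1}=k!$ is positive), the coefficients are $C_n^{j;\{1\}^k;1}=k!\,(-1)^nN_n^{j,k}$. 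The claim is therefore equivalent to $(-1)^nN_n^{j,k}>0$ for all $n\ge0$, that is, to positivity of every coefficient of $f_j(x)=(1-x)^{-j}(-x/\log(1-x))^k$ in the notation of Lemma~\ref{lem:fjInduction} with $\ell=k$.

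For $j\ge k$ the plan is to factor
\begin{equation*}
f_j(x)=\frac{1}{(1-x)^{j-k}}\left(\frac{-x}{(1-x)\log(1-x)}\right)^k .
\end{equation*}
Replacing $x$ by $-x$ in Theorem~\ref{thm:Norlund1} shows that the second factor has positive coefficients. The first factor is a power of the geometric series and so has nonnegative coefficients with constant term $1$. A product of two such series has positive coefficients, and this gives the claim for all $n\ge0$. Equivalently, one can run the induction of Lemma~\ref{lem:eventualPos} in its ``positive'' form starting from $j=k$, which is exactly the last assertion of Theorem~\ref{thm:NorlundType}.

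For $1\le j<k$ I would first obtain eventual positivity from Lemma~\ref{lem:fjInduction}. I would then try to upgrade it to positivity for every $n$ using the same induction in $j$ via Lemma~\ref{lem:eventualPos}: once $f_{j_0}$ has all coefficients positive for some $j_0$, every $f_j$ with $j\ge j_0$ does too. The base case is the real difficulty. Theorem~\ref{thm:Norlund2} only provides eventual alternation of $N_n^{(k)}=N_n^{1,k}$, and Table~\ref{tbl:NewNorBound} shows that $\Nor(k)>1$ once $k\ge3$.

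This base case is the main obstacle, and I expect it to be decisive. My first attempt would be to write $f_j=(1-x)^{k-j}f_k$ and check whether the partial differences of the positive coefficients of $f_k$ remain positive. For $k=j+1$ this requires the coefficients of $f_k$ to be increasing, and the numerical data suggest this fails for small $n$. If a genuine sign change appears for some $j<k$, the honest outcome of the plan is the following: the full statement holds for $j\ge k$, and for $j<k$ only eventual positivity is established, with the initial range of $n$ checked by computation.
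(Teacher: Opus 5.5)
Your argument for $j\ge k$ is correct and is the paper's route. The paper's proof just invokes Theorem~\ref{thm:NorlundType}, whose last clause is proved exactly by your factorization of $f_j$ as $(1-x)^{-(j-k)}$ times $\bigl(-x/((1-x)\log(1-x))\bigr)^k$, combined with Theorem~\ref{thm:Norlund1}. You also correctly treat the minus sign in the displayed series as a slip: $C_n^{j;\{1\}^k;1}$ are the coefficients of $x^k/((1-x)^j\Li_{\{1\}^k}(x))$, whose constant term is $k!$.

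You are right to stop at $j<k$, but this is not a gap you could close: the statement is false in part of that range. The failure already shows up at $n=1$, not in an initial range to be ``checked by computation.'' Since $-x/\log(1-x)=1-\tfrac12x-\tfrac1{12}x^2-\cdots$, we get
\begin{equation*}
\frac{x^k}{(1-x)^j\Li_{\{1\}^k}(x)}=k!\,(1+jx+\cdots)\Bigl(1-\frac{k}{2}x+\cdots\Bigr)=k!+k!\Bigl(j-\frac{k}{2}\Bigr)x+\cdots .
\end{equation*}
Hence $C_1^{j;\{1\}^k;1}=k!\,(j-k/2)$, which is negative when $2j<k$ and zero when $2j=k$. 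For example, $C_1^{1;\{1\}^3;1}=-3$ and $C_1^{1;\{1\}^2;1}=0$; the latter matches $N_1^{(2)}=0$ and $\Nor(2)=2$ in Table~\ref{tbl:NewNorBound}.

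The paper's one-line deduction has the same gap you identified. Theorem~\ref{thm:NorlundType} gives positivity for all $n$ only when $j\ge\ell=k$. For $j<k$ it gives only eventual positivity, via Lemma~\ref{lem:fjInduction}. So replace your conditional last paragraph with this explicit counterexample and state the corrected result: the coefficients are all positive for $j\ge k$, and eventually positive for all $j,k\ge1$. Neither argument settles the intermediate range $k/2<j<k$, and $j\ge k$ is not the sharp threshold. For $(j,k)=(2,3)$ the coefficients are $6,3,3,3,\dots$ and stay positive, because $N_2^{(3)}=N_3^{(3)}=0$ and $\Nor(3)=4$.
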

\begin{proof}
As in the proof of Corollary~\ref{cor:multilogGregory}, by the shuffle relations of iterated integrals, we have
\begin{align*}
\frac{x^k}{(1-x)^j\Li_{\{1\}^k}(x)}=&\,\frac{k!}{(1-x)^j}\left(\frac{x}{\Li_1(x)}\right)^k.
\end{align*}
The corollary follows from Theorem~\ref{thm:NorlundType} immediately.
\end{proof}

\section{Polylogarithm and double polylogarithm cases of Conjecture~\ref{conj:Trivar}}\label{sec:TrivarDepth1}

We will confirm Conjecture~\ref{conj:Trivar} in the polylogarithm and double polylogarithm cases, i.e., when $\dep(\bfk)=1$ or 2.
The crucial step of our proof is a detailed analysis of behavior of the polylogarithm $\Li_\bfk(z)$ for complex variable $z$ through its iterated integral expression and then its behavior as $z$ approaches the positive real number line from top and bottom.

To compute the coefficients $C_{n}^{0;\bfk;1}$, we will apply the same technique as the one we used in Section~\ref{sec:GregoryGeneralOrder}. Therefore, we need to control the behavior of the polylogarithm $\Li_\bfk(z)$ as $z$ approaches the positive real number line from top and bottom. If $x>1$ then we define $\Li_\bfk(x\pm  0i)$ by the same iterated path integral as above except that the terminal end of the path at $x\pm 0i$ is considered as a point on the upper (resp. lower) half-plane for $x+ 0i$ (resp. $x-0i$).

We now recall the key result in \cite{Panzer2017}. It concerns with multiple polylogarithms of multi-variables whose specialization at $(z,1,\dots,1)$ yields the single variable multiple polylogarithms in this paper. In particular, for any admissible $\bfk$, i.e., $\bfk=(k_1,\dots,k_d)\in \N^d$ with $k_1>1$, the multiple zeta value $\zeta(\bfk):=\Li_\bfk(1)$.

\begin{lem} \label{lem:PanzerRed}
\emph{(cf. \cite[Theorem 1.2]{Panzer2017})}
For any $0\ne z\in\CC\setminus[0,\infty)$ and $\bfk=(k_1,\dots,k_d)\in \N^d$
\begin{align*}
 \Li_\bfk(z)=&\, (-1)^{|\bfk|+d} \Li_\bfk\Big(\frac1z\Big)
 + \sum_{
 \substack{ a,b\ge 0, \bfn: \ \text{admissible}\\
a+b+|\bfm|+|\bfn|=|\bfk|,\\
\dep(\bfm)+\dep(\bfn)< d
}} c(a,b,\bfm,\bfn) (2\pi i)^a \log^b(z) \Li_\bfm(z)\zeta(\bfn)
\end{align*}
where $c(a,b,\bfm,\bfn)\in\Q$.
\end{lem}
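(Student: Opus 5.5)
Lemma~\ref{lem:PanzerRed} is an inversion formula, $z\mapsto 1/z$, for single-variable multiple polylogarithms. I would prove it by induction on the weight $|\bfk|$, using the iterated-integral representation of $\Li_\bfk$ and the differential equations it satisfies. Write $\bfk=(k_1,\dots,k_d)$. For $z$ off $[0,\infty)$ we have
\begin{equation*}
\frac{d}{dz}\Li_\bfk(z)=\frac{1}{z}\Li_{(k_1-1,k_2,\dots,k_d)}(z)\ \ (k_1>1),\qquad
\frac{d}{dz}\Li_\bfk(z)=\frac{1}{1-z}\Li_{(k_2,\dots,k_d)}(z)\ \ (k_1=1),
\end{equation*}
with $\Li_\emptyset=1$.

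Put $F(z)=(-1)^{|\bfk|+d}\Li_\bfk(1/z)$ and differentiate. Under $t\mapsto 1/t$ the form $dt/t$ becomes $-dt/t$, and
\begin{equation*}
\frac{dt}{1-t}\ \longmapsto\ \frac{dt}{1-t}+\frac{dt}{t}.
\end{equation*}
So if $k_1>1$, then $F'(z)=\frac1z\cdot(-1)^{|\bfk|+d-1}\Li_{(k_1-1,\dots)}(1/z)$ times the correct sign. If $k_1=1$, we get $\bigl(\frac{1}{1-z}+\frac1z\bigr)$ times $\Li_{(k_2,\dots,k_d)}(1/z)$ up to sign, and the depth drops by one. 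Now apply the induction hypothesis to the lower-weight function $\Li_{\bfk'}(1/z)$. It equals $\pm\Li_{\bfk'}(z)$ plus a rational combination of $(2\pi i)^a\log^b(z)\Li_\bfm(z)\zeta(\bfn)$ with $\dep\bfm+\dep\bfn<\dep\bfk'$. The signs $(-1)^{|\bfk|+d}$ are set up so that the leading terms match. Hence $\Li_\bfk(z)-F(z)$ has derivative equal to a combination of terms $\frac{1}{z}\cdot(\text{admissible-type terms})$ or $\frac{1}{1-z}\cdot(\text{terms})$. The extra $\frac1z\Li_{(k_2,\dots)}(1/z)$ in the depth-drop case also contributes only lower-depth terms.

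Next integrate termwise. Integrating $\frac1z\log^b(z)\Li_\bfm(z)$ and $\frac{1}{1-z}\log^b(z)\Li_\bfm(z)$ does not immediately give the stated form. Instead, rewrite $\log^b(z)\Li_\bfm(z)$ via the shuffle product of iterated integrals, since $\log z$ is the iterated integral of $dt/t$ regularized at $0$. Then $\log^b(z)\Li_\bfm(z)$ becomes a $\Q$-combination of $\log^{b'}(z)\Li_{\bfm'}(z)$ with $\bfm'$ having a leading $1$ only where needed. Integrating against $dz/z$ or $dz/(1-z)$ then yields polylogarithms of the same or one higher depth, still strictly below $d$ in total depth, together with powers of $\log z$ and constants.

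The integration constant is fixed by letting $z\to 0$ within $\CC\setminus[0,\infty)$, or by evaluating at $z=-1$. Here $\Li_\bfk(z)\to 0$, and $\Li_\bfk(1/z)$ has an asymptotic expansion as $1/z\to\infty$ that is polynomial in $\log(-z)$. Its coefficients are regularized values at infinity, which Panzer shows are $\Q$-combinations of $(2\pi i)^a\zeta(\bfn)$ with admissible $\bfn$. The branch choice $\log(-z)=\log z\mp\pi i$ accounts for the powers of $2\pi i$.

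Weight homogeneity holds throughout, because each integration raises the weight by one and the constants have matching weight, so $a+b+|\bfm|+|\bfn|=|\bfk|$. The strict depth inequality comes from the fact that every correction term arises either from lower-weight data or from the depth-dropping step.

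The main obstacle is controlling these constants. One must show that the asymptotics of $\Li_\bfk$ at $\infty$ involve only MZVs times powers of $2\pi i$ with rational coefficients. This needs regularization, namely the shuffle-regularized values at $z=1$ and the path from $1$ to $\infty$ through the upper or lower half-plane, which is exactly Panzer's argument. Since the lemma is quoted from \cite{Panzer2017}, I would ultimately invoke his Theorem~1.2, specialized at $(z,1,\dots,1)$, rather than redo this regularization in full.
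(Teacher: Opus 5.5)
The paper gives no argument of its own for this lemma. It simply quotes Panzer's Theorem~1.2, with the branch change $\log(-z)\to\log z-\pi i$ and the reversed index order noted in the remarks that follow. Your final fallback is therefore exactly what the paper does.

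The self-contained induction you sketch first does not prove the lemma, and the obstacle you name is not the real one. The differentiation step is fine. Under $t\mapsto 1/t$ we have $dt/t\mapsto -dt/t$ and $dt/(1-t)\mapsto dt/(1-t)+dt/t$, and shuffle-regularized integration against $dz/z$ or $dz/(1-z)$ keeps the depth count. So you do get
$\Li_\bfk(z)-\epsilon\Li_\bfk(1/z)=G_\bfk(z)+C_\bfk$, with $\epsilon=(-1)^{|\bfk|+d}$ and $G_\bfk$ of the required shape.

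The constant $C_\bfk$, however, is new weight-$|\bfk|$ data and is not inherited from lower weight. Your claim that the strict depth inequality ``comes from lower-weight data or from the depth-dropping step'' therefore does not cover it. Showing that $C_\bfk$ is a $\Q$-combination of $(2\pi i)^a\zeta(\bfn)$ is the routine part: compose paths $0\to1\to\infty$ and substitute $t\mapsto 1/t$. That argument only gives $\dep(\bfn)\le d$, whereas the lemma demands $\dep(\bfn)<d$. Your description of Panzer's input omits exactly this condition.

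That missing unit of depth is the parity theorem.
\begin{itemize}
\item Replace $z$ by $1/z$ in the identity, using $\log(1/z)=2\pi i-\log z$ for $0<\arg z<2\pi$. Combining the two versions gives $(1+\epsilon)C_\bfk=-G_\bfk(z)-\epsilon G_\bfk(1/z)$.
\item When $|\bfk|+d$ is even, the induction hypothesis in lower depth then forces $C_\bfk$ into the allowed span.
\item When $|\bfk|+d$ is odd, this relation says nothing about $C_\bfk$. For admissible $\bfk$, let $z\to 1$ from the upper half-plane, regularizing the lower-depth $\Li_\bfm$ and linearizing products with the stuffle product. This shows $C_\bfk\equiv 2\zeta(\bfk)$ modulo allowed terms.
\end{itemize}
So you need exactly that $\zeta(\bfk)$, for $|\bfk|+d$ odd, lies in the $\Q$-span of $\pi^{2j}\zeta(\bfn)$ with $\dep(\bfn)<d$. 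This is a form of the depth-parity theorem for multiple zeta values (Tsumura; Ihara--Kaneko--Zagier), and the lemma itself implies it by the same limit $z\to1$. Already for $d=1$ it is Euler's theorem $\zeta(2m)\in\Q\pi^{2m}$.

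The other ways you suggest for fixing the constant do not help. Evaluating at $z=-1$ introduces the alternating values $\Li_\bfm(-1)$. Letting $z\to 0$ just moves the question to the regularized value at $\infty$, whose depth-$d$ part is the same problem.

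As written, your induction reduces the lemma to the parity theorem rather than proving it. Panzer's generating-function argument establishes both at once, which is why citing him, as the paper does, is the sensible route.
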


\begin{re}
The key observation for computing single variable multiple polylogarithms $\Li_\bfk(z)$ is that if $|z|\le 1$ then it can be computed by its series definition when $\bfk$ is admissible or $z\ne 1$. If $|z|>1$ (resp. $z>1$) then we may use Lemma~\ref{lem:PanzerRed} (resp. Lemma~\ref{lem:PanzerDepth1Red}) to compute.
\end{re}

\subsection{Depth one case: polylogarithms}

When the depth of $\bfk$ is 1, Lemma~\ref{lem:PanzerRed} above can be improved to an explicit form.
The result below is well known, see, e.g., \cite{Jonquiere1889,Panzer2017}.

\begin{re}
We point out that the branch cut for log was different in \cite{Panzer2017} and a careful re-evaluation is needed here. Also, his ordering of the summation indices in the definition of multiple polylogarithm \eqref{equ:Polylog} is opposite to ours. For convenience of the reader, we will give a self-contained proof of Lemma~\ref{lem:PanzerDepth1Red}. The apparent main difference is that $\log(-z)$ in \cite{Panzer2017} should be replaced by $\log z-\pi i$ in our paper.
\end{re}

\begin{lem} \label{lem:PanzerDepth1Red}
For any $k\in\N$ and $z\in \CC\setminus[0,\infty)$ we have
\begin{align}\label{equ:PolylogDepth1Red1}
\Li_k(z)+(-1)^k\Li_k(1/z)
=&\, \pi i\frac{\log^{k-1} z}{(k-1)!} +2\sum_{j=0}^{\lfloor k/2 \rfloor} \zeta(2j)\frac{\log^{k-2j} z}{(k-2j)!} \\
=&\, -\frac1{k!}\sum_{j=0}^k \binom{k}{j}  B_j(2\pi i)^j \log^{k-j} z  \label{equ:PolylogDepth1Red2}
\end{align}
where $\zeta(0)=-1/2$ and $B_j$ are Bernoulli numbers. In particular, for all $x>0$,
\begin{equation}\label{equ:RealPolylogDepth1Red}
\Li_k(x\pm 0i)+(-1)^k\Li_k(1/(x\pm 0i))
= \pm \pi i\frac{\log^{k-1} x}{(k-1)!} +2\sum_{j=0}^{\lfloor k/2 \rfloor} \zeta(2j)\frac{\log^{k-2j} x}{(k-2j)!}.
\end{equation}
\end{lem}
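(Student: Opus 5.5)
The plan is to set $F_k(z):=\Li_k(z)+(-1)^k\Li_k(1/z)$ on the connected open set $D:=\CC\setminus[0,\infty)$ and run an induction on $k$ through the differential equation $z\frac{d}{dz}F_k=F_{k-1}$. Throughout, $\log z$ is the branch with $\arg z\in(0,2\pi)$, which is the convention forced by $\log(-z)=\log z-\pi i$. For $z\in D$ both $z$ and $1/z$ avoid $[1,\infty)$, so both terms of $F_k$ are holomorphic on $D$. From $z\frac{d}{dz}\Li_k(z)=\Li_{k-1}(z)$ and $z\frac{d}{dz}\Li_k(1/z)=-\Li_{k-1}(1/z)$ we get $zF_k'=F_{k-1}$ for $k\ge 2$.

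Let $P_k(z)$ denote the right-hand side of \eqref{equ:PolylogDepth1Red1}. Since $z\frac{d}{dz}\frac{\log^m z}{m!}=\frac{\log^{m-1}z}{(m-1)!}$ (and the constant term $j=k/2$ for even $k$ is killed), we also have $zP_k'=P_{k-1}$.

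\textbf{Base case $k=1$.} Here $F_1(z)=-\log(1-z)+\log(1-1/z)$ and $P_1(z)=\pi i-\log z$, using $\zeta(0)=-1/2$. Both functions have derivative $-1/z$ on $D$, so they differ by a constant. At $z=-1$ we have $F_1(-1)=0$, and $\log(-1)=\pi i$ gives $P_1(-1)=0$, so the constant is zero.

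\textbf{Inductive step.} Assume $F_{k-1}=P_{k-1}$ with $k\ge 2$. Then $F_k-P_k$ is constant on $D$, and I will determine the constant by letting $z\to 1$ from the upper half-plane. Along this approach $\log z\to 0$, so every term of $P_k$ with a positive power of $\log z$ vanishes; this includes $\pi i\log^{k-1}z/(k-1)!$ because $k\ge 2$. The limit of $P_k$ is therefore $2\zeta(k)$ when $k$ is even and $0$ when $k$ is odd. Meanwhile $1/z\to 1$ from the lower half-plane. Since $\Li_k$ with $k\ge2$ is continuous on the closed unit disk and continues holomorphically across the unit circle away from $[1,\infty)$, it is continuous at $1$ from either side. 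Hence $F_k\to(1+(-1)^k)\zeta(k)$, which matches the limit of $P_k$, and so the constant is zero. The delicate point in this step is the branch bookkeeping: checking that $\log z\to 0$, and not to $2\pi i$, along this approach. This is exactly why the limit is taken from above rather than below.

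\textbf{Bernoulli form \eqref{equ:PolylogDepth1Red2}.} I would insert Euler's formula $\zeta(2j)=(-1)^{j+1}B_{2j}(2\pi)^{2j}/(2(2j)!)$, which rewrites $2\zeta(2j)\frac{\log^{k-2j}z}{(k-2j)!}$ as $-\frac1{k!}\binom{k}{2j}B_{2j}(2\pi i)^{2j}\log^{k-2j}z$; the case $j=0$ is consistent with $B_0=1$. The term $\pi i\log^{k-1}z/(k-1)!$ is the $j=1$ term, via $B_1=-1/2$. The odd $B_j$ with $j\ge3$ vanish, so the two sums agree.

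\textbf{Boundary values \eqref{equ:RealPolylogDepth1Red}.} Write the right-hand side of \eqref{equ:PolylogDepth1Red2} as $-\frac{(2\pi i)^k}{k!}B_k(t)$ with $t=\log z/(2\pi i)$, where $B_k(t)$ is the Bernoulli polynomial. For $x+0i$ we have $\log z\to\log x$, which gives the $+$ sign directly. For $x-0i$ we have $\log z\to\log x+2\pi i$, i.e.\ $t\mapsto t+1$. Using $B_k(t+1)=B_k(t)+kt^{k-1}$, this adds $-2\pi i\log^{k-1}x/(k-1)!$ to the value at $x+0i$, which turns $+\pi i$ into $-\pi i$ as claimed.
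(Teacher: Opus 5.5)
Your proof is correct and follows the paper's route: induction on $k$ via $z\frac{d}{dz}F_k=F_{k-1}$ with the constant fixed by $\Li_k(1)=\zeta(k)$, then Euler's formula for \eqref{equ:PolylogDepth1Red2}, then the shift $\log z\mapsto\log z+2\pi i$ for the boundary values; your identity $B_k(t+1)=B_k(t)+kt^{k-1}$ is the same as the paper's umbral step $B\mapsto B+1$ with $B_1(1)=-B_1$. One small fix: holomorphic continuation across the circle away from $1$ does not by itself give continuity of $\Li_k$ at $1$ from outside the disk, so either invoke $\Li_k(z)=\int_0^z\Li_{k-1}(t)\,dt/t$ (whose singularity at $t=1$ is only logarithmic, hence integrable), or simply take $z=e^{i\theta}$ with $\theta\to0^+$, so that both $z$ and $1/z$ stay on the closed unit disk.
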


\begin{proof}
When $k=1$,
\begin{equation*}
\Li_1(z)-\Li_1(1/z)=-\log(1-z)+\log(1-1/z)=-\log z+\pi i.
\end{equation*}
When $k\ge 2$, by using the relations $d/dz \big(\Li_k(z)\big)=\Li_{k-1}(z)/z$ and $\Li_k(1)=\zeta(k)$ we can prove \eqref{equ:PolylogDepth1Red1} by induction immediately. Then \eqref{equ:PolylogDepth1Red2} follows easily from the Euler identity
\begin{equation*}
2\zeta(2j)=-(2\pi i)^{2j} B_{2j}/(2j)!.
\end{equation*}

For \eqref{equ:RealPolylogDepth1Red}, we only need to consider $\Li_k(x-0i)=\Li_k\big(e^{2\pi i} (x+0i)\big)$.
We first rewrite \eqref{equ:PolylogDepth1Red2} using umbral calculus:
\begin{align}\label{equ:calB}
\Li_k(x+0i)+(-1)^k\Li_k(1/(x+0i))=-\calB_k(x):=-\frac1{k!}(\log x+2\pi i B)^k
\end{align}
where $B^j$ should be understood as $B_j$. Thus,
\begin{align*}
\Li_k(x-0i)+(-1)^k\Li_k(1/(x-0i))
=-\frac1{k!}(\log x+2\pi i (B+1))^k=-\frac1{k!}(\log x+2\pi i B(1))^k
\end{align*}
where $B(1)^j=B_j(1)$ is the value of the $j$-th Bernoulli polynomial at 1. Here, we have used the fact that
$B_j(t)=(B+t)^j$. Noticing that $B_j(1)=B_j$ for all $j\ne 1$ and $B_1(1)=-B_1=1/2$, we quickly obtain \eqref{equ:RealPolylogDepth1Red}.
\end{proof}

\begin{cor}\label{cor:x>1}
For all real variable $x>1$ and positive integer $k\ge 2$, we have $\Li_k(x\pm 0i)=R_k(x)\pm I_k(x) \pi i$ where
\begin{equation*}
R_k(x)=-(-1)^k\Li_k(1/x)+2\sum_{j=0}^{\lfloor k/2 \rfloor} \zeta(2j)\frac{\log^{k-2j} x}{(k-2j)!},
\quad\text{and}\quad
I_k(x)=\frac{\log^{k-1} x}{(k-1)!}
\end{equation*}
are both real-valued functions of $x$.
\end{cor}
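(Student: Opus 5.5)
Corollary~\ref{cor:x>1} follows directly from \eqref{equ:RealPolylogDepth1Red} in Lemma~\ref{lem:PanzerDepth1Red}. The only work is to identify $\Li_k(1/(x\pm 0i))$ with the real number $\Li_k(1/x)$ and then separate real and imaginary parts.

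First, for $x>1$ we have $0<1/x<1$. The points $1/(x\pm 0i)$ are limits of $1/z$ as $z\to x$ from the upper or lower half-plane, so they approach the real point $1/x$ from the lower or upper half-plane. Inside the unit disc $\Li_k$ is given by the convergent power series \eqref{equ:Polylog}. This series is analytic in $|z|<1$, with no branch cut there, so both one-sided limits equal $\Li_k(1/x)=\sum_{n\ge1}x^{-n}/n^k$. This is a real number, even for $k=1$, where it equals $-\log(1-1/x)$ and $1/x\neq 1$.

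Next, solving \eqref{equ:RealPolylogDepth1Red} for $\Li_k(x\pm 0i)$ gives
\begin{equation*}
\Li_k(x\pm 0i)=-(-1)^k\Li_k(1/x)+2\sum_{j=0}^{\lfloor k/2\rfloor}\zeta(2j)\frac{\log^{k-2j}x}{(k-2j)!}\pm \pi i\,\frac{\log^{k-1}x}{(k-1)!}.
\end{equation*}
Each term in the sum is real: $\zeta(0)=-1/2$, $\zeta(2j)$ is real, and $\log x>0$ is real for $x>1$. The first two pieces therefore add up to $R_k(x)$, and the coefficient of $\pm\pi i$ is $I_k(x)=\log^{k-1}x/(k-1)!$. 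Both are real-valued functions of $x$, which is the claim.

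The one step that needs care is the meaning of the boundary values. $\Li_k(x\pm0i)$ is defined through the iterated path integral with the endpoint taken in the upper or lower half-plane, and Lemma~\ref{lem:PanzerDepth1Red} holds for $z\in\CC\setminus[0,\infty)$. So \eqref{equ:RealPolylogDepth1Red} is obtained by letting $z\to x$ from above or below, and both sides extend continuously. The left side does so because the path integral depends continuously on an endpoint that stays off the cut. The right side does so because $\log z\to\log x$ when $z$ approaches $x$ from above (and $\log x+2\pi i$ from below, which is handled by the Bernoulli shift already carried out in the proof of the lemma). Since the lemma already records the real-line form, this reduces to citing \eqref{equ:RealPolylogDepth1Red} directly.
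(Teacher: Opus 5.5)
Your proposal is correct and takes the same approach as the paper, which states this corollary without a separate proof as an immediate consequence of \eqref{equ:RealPolylogDepth1Red}. Like the paper, you solve that identity for $\Li_k(x\pm 0i)$, use $0<1/x<1$ to replace $\Li_k(1/(x\pm 0i))$ by the real value $\Li_k(1/x)$, and read off the real and imaginary parts.
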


\begin{thm}\label{thm:TrivarDepth1}
For any fixed positive integers $k\ge 1$ and $j\ge 1$, the coefficients of the Maclaurin series of
\begin{equation*}
\frac{-x}{\Li_k(x)} \quad\text{and}\quad  \frac{x}{(1-x)^j\Li_k(x)}
\end{equation*}
are eventually positive and all positive, respectively. Furthermore, the coefficients
\begin{equation*}
C_{n}^{0;k;1}<0\ \forall n\ge 1
\quad\text{and}\quad
C_{n}^{j;k;1}>\binom{n+j-1}{n}\frac{1}{\zeta(k)}\ \forall j\ge 1, k\ge 2, n\ge 0.
\end{equation*}
\end{thm}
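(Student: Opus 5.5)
The plan is to imitate the contour-integration argument of Section~\ref{sec:contour}, now applied to $\phi(z):=z/\Li_k(z)$ instead of $z/\log(1+z)$. The expected result is an explicit formula
\begin{equation*}
C_n^{0;k;1}=-\int_1^\infty w_k(x)\,x^{-n}\,dx \qquad (n\ge 1)
\end{equation*}
with a weight $w_k>0$ on $(1,\infty)$. Both assertions of the theorem follow from such a formula.

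\emph{Step 1 (analytic continuation and the boundary values).} The function $\Li_k(z)$ is holomorphic on $\CC\setminus[1,\infty)$ and has a simple zero at $z=0$. By Corollary~\ref{cor:x>1}, its boundary values on the cut are $\Li_k(x\pm0i)=R_k(x)\pm \pi i\, I_k(x)$ with $I_k(x)=\log^{k-1}x/(k-1)!>0$ for $x>1$. For $k=1$ one uses directly $\Li_1(x\pm0i)=-\log(x-1)\pm\pi i$. From Lemma~\ref{lem:PanzerDepth1Red}, $\Li_k(z)\sim -\frac{1}{k!}(\log z-\pi i)^k$ as $|z|\to\infty$ off the cut, so $|\phi(z)z^{-n-1}|=O(|z|^{-n}/\log^k|z|)$. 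Near $z=1$, $\Li_k$ stays bounded with $\Li_k(1)=\zeta(k)\neq0$ for $k\ge2$, while for $k=1$ one has $1/\Li_1\to0$.

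\emph{Step 2 (nonvanishing, which I expect to be the main obstacle).} To move the contour we need $\Li_k(z)\neq0$ on $\CC\setminus([1,\infty)\cup\{0\})$. I would prove this by the argument principle on the keyhole domain bounded by a large circle, the two edges of the cut, and a small circle around $1$.
\begin{itemize}
\item On the upper edge, $\operatorname{Im}\Li_k(x+0i)=\pi I_k(x)>0$, and on the lower edge it is $<0$.
\item On the large circle, $\Li_k$ behaves like $-(\log z-\pi i)^k/k!$, whose argument varies only by $o(1)$ plus a controlled amount as $z$ goes around.
\item Near $1$ the value is close to $\zeta(k)$.
\end{itemize}
Tracking $\arg\Li_k$ along this closed curve should give total winding exactly $1$, which is accounted for by the zero at $0$. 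If this tracking at infinity proves delicate, a fallback is the representation $\Li_k(z)=\frac{z}{\Gamma(k)}\int_0^\infty \frac{t^{k-1}\,dt}{e^t-z}$. It gives $\operatorname{Im}\big(\Li_k(z)/z\big)$ the same sign as $\operatorname{Im}z$, which excludes zeros off $\mathbb{R}$. On $(-\infty,1)$ the function $\Li_k(x)/x$ is manifestly positive.

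\emph{Step 3 (the integral formula and the sign).} Write $C_n^{0;k;1}=\frac1{2\pi i}\oint_{|z|=\eps}\phi(z)z^{-n-1}\,dz$ and deform the contour to the keyhole. The large-circle contribution vanishes for $n\ge1$ by Step 1, and the small circle at $1$ contributes nothing in the limit. What remains is
\begin{equation*}
C_n^{0;k;1}=\frac{1}{2\pi i}\int_1^\infty\Big(\frac{1}{\Li_k(x+0i)}-\frac{1}{\Li_k(x-0i)}\Big)\frac{dx}{x^n}
=-\int_1^\infty \frac{I_k(x)}{R_k(x)^2+\pi^2I_k(x)^2}\,\frac{dx}{x^n}.
\end{equation*}
The orientation sign should be checked against $C_1^{0;1;1}=-1/2$. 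The integral converges for every $n\ge1$, since the weight is $O(1/\log^{k+1}x)$ at infinity and bounded near $1$. Hence $C_n^{0;k;1}<0$ for all $n\ge1$, and in particular the coefficients of $-x/\Li_k(x)$ are eventually positive.

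\emph{Step 4 (the factor $(1-x)^{-j}$).} For $j=1$ we have $C_n^{1;k;1}=\sum_{m=0}^n C_m^{0;k;1}$ with $C_0^{0;k;1}=1$. This sequence is strictly decreasing in $n$. By Abel's theorem (the terms are negative for $m\ge1$), its limit equals $\lim_{x\to1^-}x/\Li_k(x)$, which is $1/\zeta(k)$ for $k\ge2$ and $0$ for $k=1$. Therefore $C_n^{1;k;1}>1/\zeta(k)$ for $k\ge2$, and $C_n^{1;k;1}>0$ for $k=1$. For general $j$ we multiply by $(1-x)^{-(j-1)}=\sum_m\binom{m+j-2}{m}x^m$, which has positive coefficients, to get
\begin{equation*}
C_n^{j;k;1}=\sum_{m=0}^n\binom{n-m+j-2}{n-m}C_m^{1;k;1}>\frac1{\zeta(k)}\sum_{m=0}^n\binom{n-m+j-2}{n-m}=\binom{n+j-1}{n}\frac1{\zeta(k)}.
\end{equation*}
The same convolution gives positivity of all coefficients of $x/((1-x)^j\Li_k(x))$, including the case $k=1$.
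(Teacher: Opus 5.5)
Your proof is correct, and its core is the paper's. The same keyhole deformation (the paper only shifts $z\mapsto z+1$) gives the same formula $C_n^{0;k;1}=-\int_1^\infty \frac{I_k(x)}{R_k(x)^2+\pi^2 I_k(x)^2}\,\frac{dx}{x^n}$. The passage from $j=1$ to general $j$ is the same convolution with $(1-x)^{-(j-1)}$.

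Three differences are worth recording.

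\emph{Zero-freeness.} Your Step~2 supplies a point the paper never addresses. Moving the contour requires $\Li_k(z)\neq 0$ on $\CC\setminus([1,\infty)\cup\{0\})$, otherwise extra residues appear. The representation $\Li_k(z)/z=\frac{1}{\Gamma(k)}\int_0^\infty \frac{t^{k-1}\,dt}{e^t-z}$ settles this completely. You can drop the argument-principle sketch; just note that this representation agrees with the iterated-integral continuation by the identity theorem.

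\emph{The case $j=1$.} The paper reruns the contour argument on $1/((1-z)z^n\Li_k(z))$ and picks up $1/\zeta(k)$ from the small circle at the branch point $z=1$. This gives directly the explicit identity behind Corollary~\ref{cor:ratZeta(k)}. You instead note that the partial sums $C_n^{1;k;1}=\sum_{m\le n}C_m^{0;k;1}$ decrease strictly to $\lim_{x\to 1^-}x/\Li_k(x)$. This avoids analysing a pole that sits on the branch point. That limit step is really monotone convergence for terms of one sign, not Abel's theorem, but it is valid. If you want the paper's formula, sum $\sum_{m>n}x^{-m}$ under your integral (Tonelli).

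\emph{The case $k=1$.} You treat it uniformly via $\Li_1(x\pm 0i)=-\log(x-1)\pm\pi i$. The paper instead reduces it to the classical Gregory coefficients and Theorem~\ref{thm:NorlundType}.
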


\begin{proof}
When $k=1$ the theorem is reduced to Theorem~\ref{thm:NorlundType} with $\ell=1$ so that we will assume $k\ge 2$ for the rest of this proof.

For any $k\ge 2$ and $n\ge 1$, we have
\begin{align*}
-C_{n}^{0;k;1}= &\, \Res_{z=-1} \frac{-1}{(z+1)^n\Li_k(z+1)}   \\
=&\, \frac{1}{2\pi i}\left( \int_\eps^R +\int_{C_R}+\int_R^\eps+ \int_{C_\eps} \right) \frac{-dz}{(z+1)^n\Li_k(z+1)}
\end{align*}
where the contour here is the same as in Figure~\ref{fig:C}. Note that $|\Li_k(z+1)|\to \infty$ as $|z|\to \infty$ by Lemma~\ref{lem:PanzerDepth1Red}
and $\Li_k(z+1)\to \Li_k(1)=\zeta(k)$ as $z\to 0$ by definition. Thus
\begin{align*}
-C_{n}^{0;k;1}=&\, \frac{1}{2\pi i} \int_0^\infty \left(\frac{1}{\Li_k(x+1-0i)}-\frac{1}{\Li_k(x+1+0i)}  \right)\frac{dx}{(x+1)^n}\\
 =&\, \frac{1}{2\pi i} \int_1^\infty \left(\frac{1}{\Li_k(x-0i)}-\frac{1}{\Li_k(x+0i)}  \right)\frac{dx}{x^n}\\
=&\, \frac{1}{2\pi i} \int_1^\infty \left(\frac{1}{R_k(x)-I_k(x)\pi i}-\frac{1}{R_k(x)+I_k(x)\pi i}  \right)\frac{dx}{x^n}
\end{align*}
by Corollary~\ref{cor:x>1}, where $(k-1)! I_k(x)=\log^{k-1} x$.
Thus for all $k>1$ we have
\begin{align*}
-C_{n}^{0;k;1}=&\, \int_1^\infty\frac{I_k(x)\, dx}{(R_k(x)^2+\pi^2 I_k(x)^2)x^n}>0.
\end{align*}
Similarly, for all $n\ge 1$,
\begin{align*}
C_{n}^{1;k;1}=&\, \int_1^\infty\frac{I_k(x)\, dx}{(R_k(x)^2+\pi^2 I_k(x)^2)(x-1) x^n}+\frac{1}{2\pi i}\lim_{\eps\to 0^+}\int_{C_\eps} \frac{-dx}{x(x+1)^n\Li_k(x+1)}\\
=&\, \int_1^\infty\frac{I_k(x)\, dx}{(R_k(x)^2+\pi^2 I_k(x)^2)(x-1) x^n}+\frac{1}{\zeta(k)}
>\frac{1}{\zeta(k)}.
\end{align*}
Since $C_{0}^{1;k;1}=1>1/\zeta(k)$,
taking any $j\ge 1$, by the proof of Lemma~\ref{lem:eventualPos}, we see that
\begin{equation*}
C_{n}^{j;k;1}>\frac{1}{\zeta(k)}\sum_{m=0}^n \binom{m+j-2}{j-2} =\frac{\binom{n+j-1}{j-1}}{\zeta(k)}
\end{equation*}
for all $n\ge 0$.

This completes the proof of Theorem~\ref{thm:TrivarDepth1}.
\end{proof}

\begin{cor}\label{cor:ratZeta(k)}
For any $k,n\in\N$ with $k\ge 2$, there is some $\nu_{n,k}\in\Q$ such that
\begin{equation*}
    \frac{1}{\zeta(k)}=\nu_{n,k}-\int_1^\infty\frac{I_k(x)\, dx}{(R_k(x)^2+\pi^2 I_k(x)^2)(x-1) x^n},
\end{equation*}
with the integral approaching $0$ as $n\to\infty$.
\end{cor}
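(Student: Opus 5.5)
The plan is to read the corollary off the identity for $C_{n}^{1;k;1}$ obtained in the proof of Theorem~\ref{thm:TrivarDepth1}. There we showed, for every $n\ge 1$ and $k\ge 2$,
\begin{equation*}
C_{n}^{1;k;1}=\int_1^\infty\frac{I_k(x)\, dx}{(R_k(x)^2+\pi^2 I_k(x)^2)(x-1) x^n}+\frac{1}{\zeta(k)}.
\end{equation*}
We set $\nu_{n,k}:=C_{n}^{1;k;1}$ and rearrange; this gives exactly the displayed formula. It remains to check two things: that $\nu_{n,k}$ is rational, and that the integral tends to $0$.

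For rationality, recall that $\Li_k(x)/x=\sum_{m\ge 1}x^{m-1}/m^k$ is a power series with rational coefficients and constant term $1$. Hence its reciprocal $x/\Li_k(x)$ also has rational coefficients, obtained recursively from $1$ by multiplications and additions only. Multiplying by $1/(1-x)=\sum x^m$ takes partial sums, which preserves rationality. Therefore $C_n^{1;k;1}\in\Q$.

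For the limit, call the integrand $\phi_n(x)$. It is positive, since $I_k(x)=\log^{k-1}x/(k-1)!>0$ for $x>1$, and it is pointwise decreasing in $n$. By monotone (or dominated) convergence it suffices to show that $\phi_1$ is integrable on $(1,\infty)$, because $\phi_n(x)\to0$ for every $x>1$. There are two regions to check.
\begin{itemize}
\item Near $x=1$: as $x\to1^+$ we have $R_k(x)\to \zeta(k)\neq 0$ (from Corollary~\ref{cor:x>1}, since $-(-1)^k\Li_k(1)+2\zeta(k)-\ldots$ reduces to $\Li_k(1)$ by continuity) and $I_k(x)\sim (x-1)^{k-1}/(k-1)!$. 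So $\phi_1\sim (x-1)^{k-2}/\zeta(k)^2$, which is integrable for $k\ge 2$.
\item At infinity: $R_k(x)^2+\pi^2I_k(x)^2\ge \pi^2 I_k(x)^2$, so $\phi_1(x)\le \dfrac{(k-1)!}{\pi^2\log^{k-1}x\,(x-1)x}$. This is integrable at $\infty$.
\end{itemize}
The only point needing care is the behavior near $x=1$, where both $I_k$ and $x-1$ vanish. The nonvanishing of $R_k$ there, equivalently $\Li_k(1)=\zeta(k)>0$, settles it. Alternatively, one may dominate $\phi_n$ by $\phi_2$ on $[2,\infty)$ and treat $[1,2]$ separately, with the same local estimate. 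This gives integral $\to0$, i.e.\ $\nu_{n,k}\to1/\zeta(k)$.
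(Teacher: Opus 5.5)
Your proposal is correct and follows the paper's own route: set $\nu_{n,k}=C_n^{1;k;1}$, note that it is rational, and rearrange the identity for $C_n^{1;k;1}$ established in the proof of Theorem~\ref{thm:TrivarDepth1}. Your dominated-convergence argument also proves that the integral tends to $0$, which the paper asserts without proof: you dominate by the integrable $\phi_1$, using $R_k(1)=\zeta(k)\neq 0$ and $I_k(x)\sim (x-1)^{k-1}/(k-1)!$ near $1$, and $R_k^2+\pi^2I_k^2\ge \pi^2 I_k^2$ at infinity. The only slip is a harmless missing factor: the correct asymptotic is $\phi_1\sim (x-1)^{k-2}/\bigl((k-1)!\,\zeta(k)^2\bigr)$.
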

\begin{proof}
This follows immediately from the fact the $\nu_{n,k}=C_{n}^{1;k;1}$ is a rational number.
\end{proof}

\subsection{Depth two case: double polylogarithms}

When the depth of $\bfk$ is 2, Lemma~\ref{lem:PanzerRed} has the following explicit form.

\begin{lem} \label{lem:PanzerDepth2Red} \emph{(cf. \cite[(3.2)]{Panzer2017})}
Let $a,b\in\N$ and $w=a+b$. If $(z,a)\ne (1,1)$ then
\begin{align*}
\Li_{a,b}(z)=-(-1)^w \Li_{a,b}(1/z)+&\, (-1)^b \sum_{\mu=b+1}^w (-1)^\mu \binom{\mu-1}{b-1} \zeta(\mu) \calB_{w-\mu}(z)\\
-\Li_{a+b}(z)+&\,(-1)^a \sum_{\mu=a}^w \binom{\mu-1}{a-1} \Li_\mu(1/z) \calB_{w-\mu}(z)
\end{align*}
where $\calB_j(z)$ is defined by \eqref{equ:calB}.
\end{lem}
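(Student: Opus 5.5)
The plan is to mimic the proof of Lemma~\ref{lem:PanzerDepth1Red}: fix the inversion formula by a differential recursion in $z$ and determine the constants of integration from the boundary value at $z=1$. Set
\begin{equation*}
\Phi_{a,b}(z):=\Li_{a,b}(z)+(-1)^w\Li_{a,b}(1/z)+\Li_{a+b}(z)
\end{equation*}
on $\CC\setminus[0,\infty)$. Call the right-hand side of the lemma, minus $-(-1)^w\Li_{a,b}(1/z)-\Li_{a+b}(z)$, by the name $\Psi_{a,b}(z)$. Both sides are holomorphic there and extend continuously to $z=1^{\pm}$ whenever $a\ge 2$. The goal is to show that $\Phi_{a,b}$ and $\Psi_{a,b}$ satisfy the same first order recursion in $a$ and agree at one point.

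\emph{Differential identities.} From the iterated integral one has $z\frac{d}{dz}\Li_{a,b}(z)=\Li_{a-1,b}(z)$ for $a\ge 2$, and $\frac{d}{dz}\Li_{1,b}(z)=\Li_b(z)/(1-z)$. Hence $z\frac{d}{dz}\Li_{a,b}(1/z)=-\Li_{a-1,b}(1/z)$, and for $a=1$ the derivative of $\Li_{1,b}(1/z)$ is
\begin{equation*}
\frac{\Li_b(1/z)}{z-1}-\frac{\Li_b(1/z)}{z}.
\end{equation*}
On the other side, from \eqref{equ:calB} the umbral form gives $z\frac{d}{dz}\calB_j(z)=\calB_{j-1}(z)$. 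Consequently $z\frac{d}{dz}$ acting on $\Psi_{a,b}$ lowers $w-\mu$ by one. This matches the terms of $\Psi_{a-1,b}$, except for the boundary terms $\mu=w$ in each sum. Those boundary terms are handled with $z\frac{d}{dz}\Li_\mu(1/z)=-\Li_{\mu-1}(1/z)$ and Pascal's rule $\binom{\mu-1}{a-1}=\binom{\mu-2}{a-2}+\binom{\mu-2}{a-1}$. The outcome is $z\Phi_{a,b}'=\Phi_{a-1,b}$ and $z\Psi_{a,b}'=\Psi_{a-1,b}$, so their difference is constant for each $a\ge 2$.

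\emph{Base case $a=1$.} Here I would differentiate directly, using the depth-one formula \eqref{equ:PolylogDepth1Red2} for $\Li_b(z)+(-1)^b\Li_b(1/z)$. The rational factors $1/(1-z)$ and $1/(z-1)$ should combine so that
\begin{equation*}
\frac{d}{dz}\Phi_{1,b}=\frac{1}{z}\Bigl(\text{combination of } \calB_{b-\mu}(z)\Li_\mu(1/z),\ \calB_{b}(z)\Bigr).
\end{equation*}
This combination is then compared with $\frac{d}{dz}\Psi_{1,b}$. For $a=1$ the first sum in $\Psi_{1,b}$ is $\zeta$-weighted, with $\mu=b+1=w$, and is constant. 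The constant of integration is fixed by letting $z\to 0^-$, or better $z\to-1$, where $\Li_{1,b}(-1)$ is an alternating value. Letting $z\to\infty$ is awkward, so I would instead avoid it by noting that $(z,a)=(1,1)$ is excluded. I would use $z=-1$ and check consistency with known parity results.

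\emph{Constants for $a\ge 2$.} Evaluate at $z=1$, where $\calB_j(1)=-(2\pi i)^jB_j/j!$, which gives $\zeta(j)$-type values for even $j$. The identity at $z=1$ becomes
\begin{equation*}
\zeta(a,b)\bigl(1+(-1)^w\bigr)+\zeta(w)=\Psi_{a,b}(1).
\end{equation*}
This is exactly the classical parity/Euler decomposition formula for double zeta values, expressing $\zeta(a,b)$ in products of single zetas for $w$ odd and trivially for $w$ even. Reproving it, or citing it from \cite{Panzer2017} after the branch conversion $\log(-z)\mapsto\log z-\pi i$ noted in the remark, pins the constant to zero.

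I expect the main obstacle to be the $a=1$ step, which combines the base derivative with the branch and constant bookkeeping. It involves the singular kernel $1/(1-z)$, the half-residue $\pi i$ terms, and the sign conventions inherited from the opposite index ordering in Panzer's paper. I would first try to avoid that step entirely by translating \cite[(3.2)]{Panzer2017} term by term: reverse the indices, replace $\log(-z)$ by $\log z-\pi i$, and recognise the resulting Bernoulli-polynomial expansions as $\calB_j(z)$ via $B_j(t)=(B+t)^j$, as in the proof of Lemma~\ref{lem:PanzerDepth1Red}.
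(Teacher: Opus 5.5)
\textbf{The statement is false as printed, so no proof of it can succeed.} Your plan does not detect this because two of its computations contain sign slips that hide the problem.

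The term $-(-1)^w\Li_{a,b}(1/z)$ should read $+(-1)^w\Li_{a,b}(1/z)$. This is also what Lemma~\ref{lem:PanzerRed} predicts for $d=2$, since its leading sign is $(-1)^{|\bfk|+d}$. For a concrete counterexample take $a=b=1$ and $z=-1$, so $\log z=\pi i$ and $\calB_1(-1)=0$. Using $\Li_{1,1}=\frac12\Li_1^2$, the left side is $\frac12\log^2 2$. The printed right side is $-\frac12\log^2 2-\zeta(2)-2\Li_2(-1)=-\frac12\log^2 2$.

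The paper gives no proof beyond the pointer to \cite[(3.2)]{Panzer2017}, so this is a transcription slip. It is harmless for Theorem~\ref{thm:TrivarDepth2}, which only uses the imaginary part $I_{a,b}(x)$ for $x>1$, where $\Li_{a,b}(1/x)$ is real.

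In your argument the error is masked in two places.
\begin{itemize}
\item[(i)] The correct derivative is $\frac{d}{dz}\Li_{1,b}(1/z)=-\Li_b(1/z)/\bigl(z(z-1)\bigr)$, the negative of what you wrote. With the correct sign, your $\Phi_{1,b}-\Psi_{1,b}$ has derivative $-2(-1)^b\Li_b(1/z)/\bigl(z(1-z)\bigr)\neq 0$. In fact $\Phi_{a,b}-\Psi_{a,b}=2(-1)^w\Li_{a,b}(1/z)$, so the base case cannot close.
\item[(ii)] Your identity at $z=1$, namely $\zeta(a,b)(1+(-1)^w)+\zeta(w)=\Psi_{a,b}(1)$, is not the parity theorem. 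The factor $1+(-1)^w$ removes $\zeta(a,b)$ exactly when $w$ is odd, and for $(a,b)=(2,1)$ the identity says $\zeta(3)=3\zeta(3)$.
\end{itemize}
Separately, $\calB_j(1)=(2\pi i)^jB_j/j!$ with no minus sign, so $\calB_0(1)=1$ and $\calB_{2m}(1)=-2\zeta(2m)$.

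\textbf{Your strategy does work once the sign is corrected.} Set $\Phi_{a,b}:=\Li_{a,b}(z)-(-1)^w\Li_{a,b}(1/z)+\Li_w(z)$. This gives a genuine proof of the corrected lemma, modulo Euler's classical evaluations, whereas the paper only cites Panzer.
\begin{itemize}
\item[(a)] The recursion $z\Phi_{a,b}'=\Phi_{a-1,b}$, $z\Psi_{a,b}'=\Psi_{a-1,b}$ does not depend on this sign. For $\Psi$, the product rule produces two full sums that merge by Pascal's rule after the shift $\mu\to\mu-1$; these are not merely boundary corrections.
\item[(b)] For $a=1$, the second sum of $\Psi_{1,b}$ telescopes, using $\Li_0(1/z)=1/(z-1)$, to $\frac{d}{dz}\Psi_{1,b}=\calB_b(z)/\bigl(z(z-1)\bigr)$. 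Hence $\frac{d}{dz}(\Phi_{1,b}-\Psi_{1,b})=\bigl(\Li_b(z)+(-1)^b\Li_b(1/z)+\calB_b(z)\bigr)/\bigl(z(1-z)\bigr)=0$ by \eqref{equ:PolylogDepth1Red2}.
\item[(c)] Fix the $a=1$ constant by letting $z\to1$ from the upper half-plane rather than at $z=-1$, which would bring in alternating double sums. The $\log(1-1/z)$ divergences and the $\pi i$ terms cancel. What remains is classical: for odd $b\ge3$, Euler's $\sum_{k=1}^{n-1}\zeta(2k)\zeta(2n-2k)=(n+\frac12)\zeta(2n)$ with $2n=b+1$; for even $b$, the sum formula plus Euler's evaluation of $\zeta(b,1)$; and $b=1$ is a direct check.
\item[(d)] For $a\ge2$, the value at $z=1$ becomes $\zeta(a,b)(1-(-1)^w)+\zeta(w)=\Psi_{a,b}(1)$, for example $\zeta(2,1)=\zeta(3)$. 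For odd $w$ this is the Euler decomposition you intended to cite, and for even $w$ it is an identity among single zeta values.
\end{itemize}
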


\begin{thm}\label{thm:TrivarDepth2}
For any fixed positive integers $a,b,j\ge 1$, the coefficients of the Maclaurin series of
\begin{equation*}
\frac{-x^2}{\Li_{a,b}(x)} \quad\text{and}\quad  \frac{x^2}{(1-x)^j\Li_{a,b}(x)}
\end{equation*}
are both eventually positive. Furthermore, the coefficients
\begin{equation*}
C_{n}^{j;a,b;1}>\binom{n+j-1}{n}\frac{1}{\zeta(a,b)}\quad \forall a\ge 2, b,j\ge 1,  n\gg 0.
\end{equation*}
\end{thm}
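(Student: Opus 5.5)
We follow the contour-integration strategy of Theorem~\ref{thm:TrivarDepth1}. For $n\ge 1$ we write
\begin{equation*}
-C_{n}^{0;a,b;1}=\Res_{z=-1}\frac{-1}{(z+1)^{n-1}\Li_{a,b}(z+1)}
\end{equation*}
and integrate over the keyhole contour $C$ of Figure~\ref{fig:C}.

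The first task is to justify dropping the two circles. Near $z+1=0$, $\Li_{a,b}(z+1)\sim (z+1)^2/2^a$ is analytic. As $|z|\to\infty$, Lemma~\ref{lem:PanzerDepth2Red} shows $|\Li_{a,b}(z+1)|$ grows like a power of $\log|z|$, so the integral over $C_R$ vanishes for $n\ge 2$. On the small circle $C_\eps$ around $z=0$ (i.e.\ $x=z+1=1$), $\Li_{a,b}(x)$ is bounded and tends to $\zeta(a,b)$ when $a\ge 2$. When $a=1$ it has at worst a $\log^2$-type singularity, so $1/\Li_{1,b}\to 0$ and the integrand is $o(1/\eps)$. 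In either case the contribution of $C_\eps$ disappears in the limit.

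We then need the boundary values of $\Li_{a,b}(x\pm 0i)$ for $x>1$. We plan to obtain them from Lemma~\ref{lem:PanzerDepth2Red} together with Lemma~\ref{lem:PanzerDepth1Red} and Corollary~\ref{cor:x>1}. The lower boundary value is the upper one with $B$ replaced by $B(1)$ in each $\calB_{w-\mu}$, and $\Li_{a+b}(x\pm0i)$ is handled by Corollary~\ref{cor:x>1}. Since every ingredient is real except for the $\pm\pi i$ parts coming from $B_1\mapsto B_1(1)$, the monodromy term of $\Li_{a+b}$, and powers $(\pm \pi i)^2$ that become real, we expect
\begin{equation*}
\Li_{a,b}(x\pm0i)=R_{a,b}(x)\pm \pi i\, I_{a,b}(x)
\end{equation*}
with $R_{a,b}$ and $I_{a,b}$ real. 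Here $I_{a,b}$ is the analytic continuation of the monodromy, which should equal the iterated integral $\int_1^x$ of $\Li$-type forms attached to the cut at $1$. Concretely, the discontinuity of $\Li_{a,b}$ across $(1,\infty)$ is $2\pi i$ times $\int_1^x (dt/t)^{a-1}\,\Li_b(t)\,dt/(t\cdots)$-type terms. We would compute it directly from the iterated integral by deforming the path: for $x>1$ the outermost $dt/(1-t)$ picks up a half-residue at $t=1$. This gives $I_{a,b}(x)=\frac{1}{(a-1)!}\int_1^x \log^{a-1}(x/t)\,\mathrm{Re}\,\Li_b(t)\,\frac{dt}{t}$-type expressions (real parts taken appropriately). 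The contour computation then yields, exactly as in depth one,
\begin{equation*}
-C_{n}^{0;a,b;1}=\int_1^\infty\frac{I_{a,b}(x)\,dx}{\big(R_{a,b}(x)^2+\pi^2I_{a,b}(x)^2\big)x^{n-1}}.
\end{equation*}

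\textbf{Main obstacle.} The key step is the sign of $I_{a,b}$. Unlike the depth-one case, $I_{a,b}(x)$ need not be positive on all of $(1,\infty)$, because $\mathrm{Re}\,\Li_b(t)$ for $t>1$ can change sign (for $b=1$ it is $-\log(t-1)$). This is why only eventual positivity is claimed. The plan is to show that $I_{a,b}(x)>0$ for $x$ near $1^+$. Near $t=1$, $\mathrm{Re}\,\Li_b(t)\to\zeta(b)>0$ for $b\ge2$, or $-\log(t-1)\to+\infty$ for $b=1$, so $I_{a,b}(x)\sim c\,(x-1)^{a}\cdot(\text{positive})$. Since $x^{-(n-1)}$ concentrates all weight near $x=1$ as $n\to\infty$, a Laplace-type argument will finish. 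We split the integral at some $1<\delta$ where $I_{a,b}>0$ on $(1,\delta]$, and bound the tail by $\delta^{-(n-1)}$ times a constant; the tail converges because the integrand decays like $1/(\log\text{-power}\cdot x^{n-1})$. The near part is at least $c'\int_1^{\delta} (x-1)^{a}x^{-(n-1)}\,dx\gg n^{-a-1}$, which beats the exponentially small tail. This gives $C_n^{0;a,b;1}<0$ for $n\gg0$.

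For $j\ge1$ we repeat the computation with the extra factor $1/(1-x)^{j}$. For $j=1$ it adds the pole at $z=0$, whose residue contributes exactly $1/\zeta(a,b)$ when $a\ge2$, while the remaining integral carries an extra factor $1/(x-1)$; the same near/tail splitting shows that integral is eventually positive. This gives $C_n^{1;a,b;1}>1/\zeta(a,b)$ for $n\gg0$. When $a=1$ the pole contribution vanishes, and we use eventual positivity plus divergence of the sum of coefficients, which follows since $x^2/\Li_{1,b}(x)\to 0$ slowly. Finally, we pass to general $j$ through Lemma~\ref{lem:eventualPos} and the summation argument of Theorem~\ref{thm:TrivarDepth1}. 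Eventual rather than total positivity costs only a finite correction, which is dominated because $\binom{n+j-1}{j-1}$-growth strictly exceeds it. To keep the strict inequality we need the partial-sum excess $\sum_m (C_m^{1}-1/\zeta(a,b))\binom{n-m+j-2}{j-2}$ to be eventually positive; this holds since the excess terms are eventually positive and decay only polynomially, like $n^{-a-1}$.
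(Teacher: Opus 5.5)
Your skeleton matches the paper's proof. You use the keyhole contour and the representation $-C_n^{0;a,b;1}=\int_1^\infty h(x)\,x^{-(n-1)}\,dx$ with $h=I_{a,b}/(R_{a,b}^2+\pi^2 I_{a,b}^2)$. You need positivity of $I_{a,b}$ just right of $1$, then a near/tail split; your polynomial lower bound on the near part is a fine substitute for the paper's comparison of $r^{-n}$ with $(2r)^{-n}$. Finally you pick up the extra $1/\zeta(a,b)$ from the small circle when $j=1$ and sum over $j$.

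\textbf{The formula for $I_{a,b}$ is wrong, but the needed sign survives.} Write $\Li_{a,b}(x)=\int_0^x \frac{\log^{a-1}(x/t)}{(a-1)!}\frac{\Li_b(t)}{1-t}\,dt$. The half-residue at $t=1$ gives $\zeta(b)\log^{a-1}x/(a-1)!$. The segment $(1,x)$ contributes through $\mathrm{Im}\,\Li_b(t+0i)=\pi\log^{b-1}t/(b-1)!$ against $dt/(1-t)$. So for $b\ge2$
\[
I_{a,b}(x)=\zeta(b)\frac{\log^{a-1}x}{(a-1)!}-\frac{1}{(a-1)!\,(b-1)!}\int_1^x\frac{\log^{a-1}(x/t)\,\log^{b-1}t}{t-1}\,dt .
\]
For $b=1$, $I_{1,1}(x)=-\log(x-1)$, and $I_{a,1}$ is its $(a-1)$-fold (not $a$-fold) iterated $\int_1^x dt/t$. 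Consequences for $b\ge2$:
\begin{itemize}
\item $I_{1,b}(1)=\zeta(b)\ne0$.
\item $I_{a,b}(x)\sim\zeta(b)(x-1)^{a-1}/(a-1)!$, not $c(x-1)^a$.
\item The negativity at large $x$ comes from the second term, not from $\mathrm{Re}\,\Li_b$.
\end{itemize}
Positivity on some $(1,\delta]$ still holds, so this part is repairable. The paper gets it from $(x\,d/dx)^s I_{a,b}=I_{a-s,b}$, together with $I_{a',b}(1)=0$ for $a'\ge2$ and $I_{1,b}(1)=\zeta(b)$ (resp.\ $+\infty$ when $b=1$).

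\textbf{The genuine gap is the strict inequality for $j\ge2$.} Put $\epsilon_m:=C_m^{1;a,b;1}-1/\zeta(a,b)$. Then
\[
C_n^{j;a,b;1}-\binom{n+j-1}{n}\frac{1}{\zeta(a,b)}=\sum_{m\le n}\epsilon_m\binom{n-m+j-2}{j-2}.
\]
With the correct $I_{a,b}$, $\epsilon_m\asymp m^{-(a-1)}$ (times $\log m$ if $b=1$), which is summable once $a\ge3$. In that case the finitely many possibly negative early $\epsilon_m$ and the positive tail contribute at the same order $n^{j-2}$. So \emph{eventually positive with polynomial decay} does not decide the sign. (For $a=2$, non-summability does rescue your reasoning.)

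What you need is $\sum_{m\ge0}\epsilon_m>0$. Since $\epsilon_m>0$ eventually, monotone convergence gives
\[
\sum_{m\ge0}\epsilon_m=\lim_{x\to1^-}\frac{1}{1-x}\Big(\frac{x^2}{\Li_{a,b}(x)}-\frac{1}{\zeta(a,b)}\Big)=\frac{\zeta(a-1,b)-2\zeta(a,b)}{\zeta(a,b)^2}\qquad(a\ge3),
\]
using $\frac{d}{dx}\Li_{a,b}(x)=\Li_{a-1,b}(x)/x$. This is positive because $\zeta(a-1,b)-2\zeta(a,b)=\sum_{n>m\ge1}(n-2)/(n^a m^b)>0$. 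For $a=2$ the limit is $+\infty$, since $\Li_{1,b}(x)\to\infty$ as $x\to1^-$. Hence the partial sums of $\epsilon_m$ (the case $j=2$) are eventually positive, and Lemma~\ref{lem:eventualPos} carries this to every $j\ge3$.

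The paper does not supply this step either. It only appeals to Lemma~\ref{lem:fjInduction}, which gives eventual positivity of $C_n^{j;a,b;1}$ but not the comparison with $\binom{n+j-1}{n}/\zeta(a,b)$. In depth one the issue never arose, because there $C_n^{1;k;1}>1/\zeta(k)$ for every $n$.
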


\begin{proof}
If $a=b=1$ then the theorem is reduce to Corollary~\ref{cor:multilogGregory} and Corollary~\ref{cor:multilogNorlund} with $k=2$.
Thus we may assume $w\ge 3$ for the rest of the proof.

By the same argument as in Section~\ref{sec:contour}, we see that for $n\ge 1$
\begin{align*}
-C_{n}^{0;a,b;1}= &\, \Res_{z=-1} \frac{-1}{(z+1)^{n-1}\Li_{a,b}(z+1)}   \\
=&\, \frac{1}{2\pi i}\left( \int_\eps^R +\int_{C_R}+\int_R^\eps+ \int_{C_\eps} \right) \frac{-dz}{(z+1)^{n-1}\Li_{a,b}(z+1)}
\end{align*}
where the contour here is the same as in Figure~\ref{fig:C}. Note that $|\Li_{a,b}(z+1)|\to \infty$ as $|z|\to \infty$ by Lemma~\ref{lem:PanzerDepth2Red}
and, if $a\ge 2$, $\Li_{a,b}(z+1)\to \Li_{a,b}(1)=\zeta(a,b)$ as $z\to 0$ by definition. Further,
$|\Li_{1,b}(z+1)|\to \infty$ as $z\to 0$. Thus
\begin{align*}
-C_{n}^{0;a,b;1}=&\, \frac{1}{2\pi i} \int_0^\infty \left(\frac{1}{\Li_{a,b}(x+1-0i)}-\frac{1}{\Li_{a,b}(x+1+0i)}  \right)\frac{dx}{(x+1)^{n-1}}\\
 =&\, \frac{1}{2\pi i} \int_1^\infty \left(\frac{1}{\Li_{a,b}(x-0i)}-\frac{1}{\Li_{a,b}(x+0i)}  \right)\frac{dx}{x^n}\\
=&\, \frac{1}{2\pi i} \int_1^\infty \left(\frac{1}{R_{a,b}(x)-I_{a,b}(x)\pi i}-\frac{1}{R_{a,b}(x)+I_{a,b}(x)\pi i}  \right)\frac{dx}{x^{n-1}}\\
=&\, \int_1^\infty\frac{I_{a,b}(x)\, dx}{(R_{a,b}(x)^2+\pi^2 I_{a,b}(x)^2)x^{n-1}},
\end{align*}
where $\Li_{a,b}(x+0i)=R_{a,b}(x)+\pi i I_{a,b}$ for all $x>1$. By Lemma~\ref{lem:PanzerDepth2Red}, we see that
\begin{align*}
 I_{a,b}(x)=-\frac{\log^{w-1}x}{(w-1)!}-&\, (-1)^b \sum_{\mu=b+1}^{w-1} (-1)^\mu \binom{\mu-1}{b-1} \zeta(\mu) \frac{\log^{w-\mu-1}x}{(w-\mu-1)!}\\
-&\,(-1)^a \sum_{\mu=a}^{w-1} \binom{\mu-1}{a-1} \Li_\mu(1/z) \frac{\log^{w-\mu-1}x}{(w-\mu-1)!}.
\end{align*}
Hence,
\begin{align*}
 I_{a,b}(1)=\delta_{a\ge 2} (-1)^{w-b} \binom{w-2}{b-1} \zeta(w-1)-(-1)^a \binom{\mu-2}{a-1}\Li_{w-1}(1)=0
\end{align*}
since $w\ge 3$. For all $s<a$ we see that
\begin{align*}
g_s(x):= &\,\left(x \frac{d}{dx}\right)^j I_{a,b}(x)
=\frac{1}{\pi i} {\rm Im} \left[\left(x \frac{d}{dx}\right)^j \Li_{a,b}(x)\right]=I_{a-s,b}(x).
\end{align*}
Hence,
\begin{align*}
g_0(1)=g_1(1)=\cdots=g_{a-2}(1)=0,
\end{align*}
and $g_{a-1}(1):= \zeta(b)$ if $b\ge 2$. If $b=1$ then
\begin{align*}
    \lim_{x\to 1^+} g_{a-1}(x)=\lim_{x\to 1^+}  \Li_1(1/z) = \sum_{n=1}^\infty \frac1n = +\infty.
\end{align*}
This implies that there exists $r>1$ such that $I_{a,b}(x)>0$ over the interval $(1,2r)$.

Set $h(x):=I_{a,b}(x)/(R_{a,b}(x)^2+\pi^2 I_{a,b}(x)^2)$,
\begin{equation*}
\ga:=\int_1^r h(x)\, dx, \quad\text{and}\quad \beta:=\int_{2r}^\infty \frac{|h(x)|}{x^{2}}\, dx=\int_0^{1/(2r)} |h(1/x)|\, dx.
\end{equation*}
Noticing that $\lim_{x\to \infty} |h(x)|=0$ we know that both $\ga$ and $\beta$ are finite and positive.
We see that
\begin{align*}
-C_{n}^{0;a,b;1}> &\, \int_1^r \frac{h(x)}{x^{n-1}} \, dx-\int_{2r}^\infty \left|\frac{h(x)}{x^{n-1}}\right| \, dx\\
\ge &\, \frac{\ga}{r^{n-1}}-\int_0^{1/(2r)} |h(1/x)| x^{n-3}  \, dx\\
\ge &\, \frac{r\ga}{r^n} -  \frac{r^3\beta}{2^{n-3} r^n}=\frac{r\ga}{r^n} \left( 1-\frac{r^2\beta}{2^{n-3}\ga}\right)>0
\end{align*}
for all $n>3+\log_2(r^2\beta/\ga)$.

Completely similar ideas can be applied to bound $C_{n}^{1;a,b;1}$. The only difference is the possible additional contribution
by the contour integral around $0$ when $a\ge 2$, with the additional value $1/\zeta(a,b)$. The rest of the theorem now follows
from Lemma~\ref{lem:fjInduction}.

This completes the proof of Theorem~\ref{thm:TrivarDepth2}.
\end{proof}

\begin{cor}\label{cor:ratZeta(a,b)}
For any $a,b,n\in\N$ with $a\ge 2$, there is some $\nu_{n,a,b}\in\Q$ such that
\begin{equation*}
    \frac{1}{\zeta(a,b)}=\nu_{n,a,b}-\int_1^\infty\frac{I_{a,b}(x)\, dx}{(R_{a,b}(x)^2+\pi^2 I_{a,b}(x)^2)(x-1) x^n},
\end{equation*}
with the integral approaching $0$ as $n\to\infty$.
\end{cor}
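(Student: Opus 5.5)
The plan mirrors the proof of Corollary~\ref{cor:ratZeta(k)}: set $\nu_{n,a,b}:=C_{n}^{1;a,b;1}$, show it is rational, derive the stated integral identity from the contour computation in the proof of Theorem~\ref{thm:TrivarDepth2} with $j=1$, and then prove the integral tends to $0$.

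\emph{Rationality.} The series $\Li_{a,b}(x)=\sum_{n_1>n_2>0}x^{n_1}/(n_1^an_2^b)$ lies in $x^2\Q[[x]]$ and has leading coefficient $1/2^a\ne0$. Hence $x^2/\Li_{a,b}(x)\in\Q[[x]]$ is invertible. Multiplying by $1/(1-x)=\sum x^m$ gives rational coefficients $C_{n}^{1;a,b;1}$ via \eqref{equ:trivariateDefn}.

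\emph{The identity.} I would repeat the contour argument of Theorem~\ref{thm:TrivarDepth2}, now for the function $\frac{1}{(z+1)^{n-1}}\cdot\frac{1}{(-z)\Li_{a,b}(z+1)}$. This is the substitution $x=z+1$ applied to $x^2/((1-x)\Li_{a,b}(x))$, with the residue taken at $z=-1$ and the contour of Figure~\ref{fig:C}.

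The steps are as follows.

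\begin{enumerate}
\item The arc $C_R$ contributes nothing in the limit, since $|\Li_{a,b}(z+1)|\to\infty$ by Lemma~\ref{lem:PanzerDepth2Red} and there is extra decay from $(z+1)^{-(n-1)}$.
\item The two sides of the cut $[0,\infty)$ give the jump of $1/\Li_{a,b}$ across $(1,\infty)$. Writing $\Li_{a,b}(x\pm0i)=R_{a,b}(x)\pm\pi iI_{a,b}(x)$, this yields $\int_1^\infty \frac{I_{a,b}(x)}{R_{a,b}^2+\pi^2I_{a,b}^2}\frac{dx}{(x-1)x^{n-1}}$, up to the normalization of powers of $x$ used in the statement.
\item The small circle $C_\eps$ around $z=0$ ($x=1$) sees a simple pole from $1/(1-x)$. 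Because $a\ge2$, $\Li_{a,b}$ is continuous at $1$ with value $\zeta(a,b)$, so this circle contributes exactly the residue term $1/\zeta(a,b)$. This is the same mechanism as in Theorem~\ref{thm:TrivarDepth1}.
\end{enumerate}

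Rearranging gives $\frac1{\zeta(a,b)}=\nu_{n,a,b}-\int_1^\infty(\cdots)$. I would track signs exactly as in the depth-one computation. Near $x=1$ the integrand is integrable: $I_{a,b}(1)=0$ (shown in the proof of Theorem~\ref{thm:TrivarDepth2}), $I_{a,b}$ is smooth from the right, and $\Li_{a,b}(1)\neq0$. These facts cancel the $1/(x-1)$ singularity.

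\emph{Decay, which I expect to be the main obstacle.} The difficulty is justifying that the $C_\eps$ limit and the integrals are legitimate. For that I need uniform control of $h(x)/(x-1)$, where $h=I_{a,b}/(R_{a,b}^2+\pi^2I_{a,b}^2)$, near $x=1$ and at $\infty$.

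\begin{itemize}
\item Near $1$: the bound $|h(x)/(x-1)|\le C$ follows from $I_{a,b}(x)=O(x-1)$ and the denominator being bounded below by $\zeta(a,b)^2/2$.
\item At infinity: $|h|\to0$, as already noted in the proof of Theorem~\ref{thm:TrivarDepth2}.
\end{itemize}

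So $h(x)/(x-1)$ is bounded and integrable against $x^{-n}$ for $n\ge2$. Dominated convergence then applies, with dominating function $|h(x)|/((x-1)x^{2})$ for $n\ge 2$ and pointwise limit $0$ for $x>1$. Hence the integral tends to $0$, and therefore $\nu_{n,a,b}\to1/\zeta(a,b)$.
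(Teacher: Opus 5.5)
Your argument follows the paper's route. The paper's proof is just the remark that $\nu_{n,a,b}$ is a coefficient of $x^2/((1-x)\Li_{a,b}(x))$, hence rational. The identity itself comes from the keyhole computation sketched at the end of the proof of Theorem~\ref{thm:TrivarDepth2}, where the small circle at $x=1$ gives $1/\zeta(a,b)$. Your rationality check, the contour bookkeeping, and the dominated-convergence argument for the limit supply what the paper leaves implicit. Two details need fixing.

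\emph{The index.} Your own computation is correct, and you should not absorb it into ``normalization''. Since $C_n^{1;a,b;1}=\Res_{x=0}\frac{1}{(1-x)x^{n-1}\Li_{a,b}(x)}$, the contour gives
\[
\frac{1}{\zeta(a,b)}=C_{n}^{1;a,b;1}-\int_1^\infty\frac{I_{a,b}(x)\,dx}{(R_{a,b}(x)^2+\pi^2 I_{a,b}(x)^2)(x-1)x^{n-1}}.
\]
So with $\nu_{n,a,b}=C_n^{1;a,b;1}$ the displayed identity (with $x^n$) is false; the correct choice is $\nu_{n,a,b}=C_{n+1}^{1;a,b;1}$. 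The statement is existential, so this costs nothing, but it must be said. The paper's one-line proof makes the same off-by-one slip. It is carried over from Corollary~\ref{cor:ratZeta(k)}, where the numerator is $x$ rather than $x^2$ and the power really is $x^n$.

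\emph{Behaviour at $x=1$.} You claim $I_{a,b}$ is smooth from the right with $I_{a,b}(x)=O(x-1)$, so that $h(x)/(x-1)$ is bounded. This fails for $(a,b)=(2,1)$. There $\frac{d}{dx}I_{2,1}(x)=I_{1,1}(x)/x=-\log(x-1)/x$, so $I_{2,1}(x)\sim(x-1)\log\frac{1}{x-1}$ and $h(x)/(x-1)\sim\zeta(2,1)^{-2}\log\frac{1}{x-1}$ is unbounded.

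Only integrability is needed, however. For every $a\ge2$ one has $I_{a,b}(x)=O\bigl((x-1)\log\frac{1}{x-1}\bigr)$. This follows from $I_{a,b}(1^+)=0$ and $I_{a,b}'=I_{a-1,b}/x$, together with $I_{1,b}\to\zeta(b)$ for $b\ge2$ and $I_{1,1}=-\log(x-1)$. So the $\eps\to0$ limit of the cut integrals and your dominating function $|h(x)|/((x-1)x^2)$ still work once ``bounded'' is replaced by ``$O(\log\frac{1}{x-1})$ near $1$''.

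Finally, like the paper, you tacitly use two facts: $\Li_{a,b}$ has no zeros in $\CC\setminus[1,\infty)$ other than $x=0$, and $\Li_{a,b}(x+0i)\ne0$ for $x>1$. Both the residue-theorem step and the finiteness of $h$ rest on these.
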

\begin{proof}
This follows immediately from the fact the $\nu_{n,a,b}=C_{n}^{1;a,b;1}$ is a rational number.
\end{proof}

\begin{cor}\label{cor:iteratedInt}
Conjecture~\ref{conj:Trivar} holds for the case $j\ge 0$, $\dep(\bfk)=1$ and $\ell=2$.
\end{cor}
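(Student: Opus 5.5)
Conjecture~\ref{conj:Trivar} in the case $\dep(\bfk)=1$, $\ell=2$ asks for three things. We need $C_n^{0;k;2}<0$ and $C_n^{j;k;2}>0$ for all $n\gg 0$. When $k\ge 2$ we also need $C_n^{j;k;2}>\binom{n+j-1}{n}\zeta(k)^{-2}$ for all $n\gg0$.

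The case $k=1$ is already covered: for $j=0$ by Theorem~\ref{thm:GregoryGeneralOrder} (or Corollary~\ref{cor:order2Conj}), and for $j\ge1$ by Theorem~\ref{thm:NorlundType} with $\ell=2$. So assume $k\ge2$.

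The idea is to reuse the contour argument from the proof of Theorem~\ref{thm:TrivarDepth1}, now with $1/\Li_k^2$ in place of $1/\Li_k$. For $n\ge1$,
\begin{equation*}
-C_n^{0;k;2}=\frac{1}{2\pi i}\int_1^\infty\left(\frac{1}{\Li_k(x-0i)^2}-\frac{1}{\Li_k(x+0i)^2}\right)\frac{dx}{x^{n-1}}.
\end{equation*}
The arc $C_R$ contributes nothing, since $|\Li_k(z+1)|\to\infty$ by Lemma~\ref{lem:PanzerDepth1Red}. The arc $C_\eps$ contributes nothing, since $\Li_k(z+1)\to\zeta(k)$ is finite and there is no pole at $z=0$ for $j=0$.

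Write $\Li_k(x\pm0i)=R_k\pm\pi iI_k$ as in Corollary~\ref{cor:x>1}. The integrand then becomes
\begin{equation*}
h(x)=\frac{2R_k(x)I_k(x)}{(R_k(x)^2+\pi^2I_k(x)^2)^2},
\end{equation*}
so that $-C_n^{0;k;2}=\int_1^\infty h(x)x^{1-n}\,dx$.

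Unlike the order-one case, $h$ is not obviously positive, because $R_k$ may change sign. We therefore proceed as in the proof of Theorem~\ref{thm:TrivarDepth2}.

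\begin{itemize}
\item Near $x=1$ we have $R_k(x)\to\zeta(k)>0$ and $I_k(x)=\log^{k-1}x/(k-1)!>0$ for $x>1$. So there is $r>1$ with $h>0$ on $(1,2r)$.
\item Set $\alpha=\int_1^r h\,dx>0$ and $\beta=\int_{2r}^\infty |h(x)|x^{-2}\,dx$. The integral $\beta$ is finite because $|\Li_k(x\pm0i)|$ grows like $\log^k x$, so $h$ is bounded.
\item Compare the positive part with the tail:
\begin{equation*}
-C_n^{0;k;2}\ \ge\ \alpha r^{1-n}-\beta(2r)^{3-n}.
\end{equation*}
The right-hand side is positive for $n>3+\log_2(r^2\beta/\alpha)$.
\end{itemize}
This gives $C_n^{0;k;2}<0$ for $n\gg0$.

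For $j=1$, the same contour applied to $1/(x\Li_k(x)^2)$ (after the shift $x\to x+1$) gives
\begin{equation*}
C_n^{1;k;2}=\int_1^\infty\frac{h(x)\,dx}{(x-1)x^{n}}\cdot x+\frac{1}{\zeta(k)^2}.
\end{equation*}
Here the extra term $1/\zeta(k)^2$ is the residue picked up on $C_\eps$. The integral is positive for $n\gg0$ by the same splitting argument. Since $h(x)/(x-1)$ behaves like $\log^{k-1}x/(x-1)$ near $1$, the integral converges, and positivity near $1$ still dominates. Hence $C_n^{1;k;2}>1/\zeta(k)^2$ for all $n\ge n_0$.

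To pass to general $j$, we sum, using $C_n^{j;k;2}=\sum_{m\le n}C_m^{j-1;k;2}$ and inducting as in Lemma~\ref{lem:eventualPos}. The finitely many early terms may be negative, but they only contribute a bounded error. Meanwhile the main term grows like $\binom{n+j-1}{n}\zeta(k)^{-2}$, plus an excess $\sum_{m\le n}\binom{n-m+j-2}{j-2}\,\delta_m$ with every $\delta_m>0$ for $m\ge n_0$. This excess tends to infinity for $j\ge2$, so it eventually absorbs the bounded error, and we obtain $C_n^{j;k;2}>\binom{n+j-1}{n}\zeta(k)^{-2}$ for $n\gg0$.

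The main obstacle is the case $j=1$. There the strict inequality must come from the integral alone, so we need that integral to be eventually positive; this is exactly the dominance estimate above, and its validity rests on checking that $h>0$ on an interval just to the right of $1$.
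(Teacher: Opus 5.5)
Your treatment of $j=0$ and $j=1$ is correct, and it takes a more direct route than the paper. The paper expands $\Li_k(z)^2$ by the shuffle product into a combination of $\Li_{\mu,\nu}(z)$ and defers to the depth-two argument of Theorem~\ref{thm:TrivarDepth2}. You instead square $R_k\pm\pi i I_k$, which gives $h=2R_kI_k/(R_k^2+\pi^2I_k^2)^2$. This makes positivity near $x=1$ immediate from $R_k(1)=\zeta(k)$ and avoids Lemma~\ref{lem:PanzerDepth2Red} entirely. One small repair, inherited from the paper: local positivity only gives $h>0$ on some $(1,1+\delta)$, not on $(1,2r)$ with $r>1$. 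Split instead at points $1<r<s<1+\delta$ and use $(r/s)^n\to0$.

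The genuine gap is the passage to $j\ge2$ in the bound $C_n^{j;k;2}>\binom{n+j-1}{n}\zeta(k)^{-2}$ when $k\ge3$; this, not $j=1$, is the real obstacle. Put $\delta_m=C_m^{1;k;2}-\zeta(k)^{-2}$, which for large $m$ equals $\int_1^\infty h(x)(x-1)^{-1}x^{1-m}\,dx$. As $x\to1^+$ we have $h(x)/(x-1)\sim 2(x-1)^{k-2}/((k-1)!\,\zeta(k)^3)$, so $\delta_m\sim\frac{2}{(k-1)\zeta(k)^3}\,m^{1-k}$, which is summable for $k\ge3$.

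This breaks both claims in your last step.
\begin{itemize}
\item For $j=2$, the excess $\sum_{m=n_0}^n\delta_m$ stays bounded, so it need not absorb $\sum_{m<n_0}\delta_m$. Your splitting estimate says nothing about the sign of those early terms.
\item For $j\ge3$, the early-term error $\sum_{m<n_0}\binom{n-m+j-2}{j-2}\delta_m$ is not bounded. It is of order $n^{j-2}$, the same order as the excess.
\end{itemize}
In every case the large-$n$ sign is decided by the full sum $S=\sum_{m\ge0}\delta_m$, which you never control. For $k=2$, $\delta_m\asymp1/m$ and $S=+\infty$, so your argument does go through there.

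The missing input is the Abel evaluation, using $x\Li_k'(x)=\Li_{k-1}(x)$:
\[
S=\lim_{x\to1^-}\frac{1}{1-x}\Big(\frac{x^2}{\Li_k(x)^2}-\frac{1}{\zeta(k)^2}\Big)=-\frac{d}{dx}\frac{x^2}{\Li_k(x)^2}\Big|_{x=1}=\frac{2(\zeta(k-1)-\zeta(k))}{\zeta(k)^3}>0 .
\]
Since $\delta_m>0$ eventually, the partial sums converge to $S$. Hence $C_n^{2;k;2}-(n+1)\zeta(k)^{-2}\to S>0$, so this sequence is eventually positive with divergent partial sums. Lemma~\ref{lem:eventualPos} then carries the bound to all $j\ge3$ by induction.

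The paper's own sketch does not address this point either. Its appeal to Lemma~\ref{lem:fjInduction} at the end of Theorem~\ref{thm:TrivarDepth2} needs the divergence hypothesis of Lemma~\ref{lem:eventualPos}, and that hypothesis fails for this excess series.
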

\begin{proof}
We only need the following fact which is an easy consequence of the shuffle relations of iterated integrals:
\begin{equation*}
 \Li_k(z)^2=\left[\int_0^z \left(\frac{dt}{t}\right)^{k-1} \frac{dt}{1-t}\right]^2
= \sum_{\substack{\mu,\nu\ge 1\\ \mu+\nu=2k}}
\bigg[\binom{\mu-1}{k-1}+\binom{\nu-1}{k-1}\bigg] \Li_{\mu,\nu}(z).
\end{equation*}
Then the behavior of the imaginary part of $\Li_k(x\pm 0i)^2$ are determined by that of the double polylarithms $\Li_{\mu,\nu}(x\pm 0i)$
and the proof of the corollary follows from the same argument as in the proof of Theorem~\ref{thm:TrivarDepth2}. Thus, we leave the details to the interested reader.
\end{proof}

\section{Concluding remarks}
In this paper, we first generalized the classical Cauchy numbers of the first and second kind, i.e., the Gregory coefficients and N\"olund numbers, to their higher order analogs. Two new phenomena appear for the higher order Gregory coefficients $G_n^{(\ell)}$. First, for each $\ell\ge 2$, some $G_n^{(\ell)}$ must vanish although the classical Gregory coefficients $G_n$'s are nonzero for all $n\ge1$. Second, the well-known alternating pattern of the $G_n$ continues to hold for $G_n^{(\ell)}$, but only for sufficiently large $n$, namely, for $n\ge \Gr(\ell)$, where, by computer-aided computation, $\Gr(\ell)$ seems to be roughly tripling as $\ell$ increases, the reason of which is still a mystery.

Replacing the logarithm by multiple polylogarithms, we defined a trivariate version of the Cauchy numbers of the first and second kind. From numerical experiments, we have found that the vanishing coefficient does not always exist but the alternating property, turning to eventual positiveness in the general case, should continue to hold (see Conjecture~\ref{conj:Trivar}). We then verified this conjecture in the polylogarithm and double polylogarithm cases. It is possible to extend these proofs to the depth three case using the explicit expression in \cite[(4.3)]{Panzer2017}. However, due to the implicit nature of the general result as presented in Lemma~\ref{lem:PanzerRed}, it is likely that new approach is need  to prove the general case of Conjecture~\ref{conj:Trivar}. However, as the proof of Corollary~\ref{cor:iteratedInt} shows, we only need to consider $\ell=1$ since the general case follows from this case by the shuffle relations of iterated integrals.

As a side remark, we notice that reciprocal of logarithm is intimately related to the distribution of primes and appears in the prime number theorem. We wonder if the reciprocal of multiple polylogarithms also plays some role in certain relevant studies of primes.

We would like to know if the general coefficients $C_{n}^{j;\bfk;\ell}$ defined by \eqref{equ:trivariateDefn} have any significant arithmetic properties such as congruences or if they are related to some other interesting objects in number theory such as those discovered in \cite{KanekoMatsusakaSeki2025}.

Finally, as far as we know, Corollaries~\ref{cor:ratZeta(k)} and ~\ref{cor:ratZeta(a,b)} provide the approximation of the reciprocals of zeta and double zeta values by rational numbers nontrivially for the first times. We wonder if these can be improved to obtain some nontrivial irrationality results concerning zeta and double zeta values.

\medskip\noindent
{\bf Declaration of Competing Interest.} The authors declare that there is no competing interest.

\medskip\noindent
{\bf Acknowledgments.} Both authors gratefully acknowledge the invitation by Professor Chengming Bai and the support of the Visiting Scholars Program at the Chern Institute of Mathematics and by Professors Li Lai, Shaoyun Yi and Huilin Zhu of Xiamen University and the support of the Tianyuan Mathematical Center in Southeast China (TMSE). This work commenced during these visits. The second author also wants to thank Dr. Marcus Jaiclin at The Bishop's School for his generous help with our numerical computation. Ce Xu is supported by the General Program of Natural Science Foundation of Anhui Province (Grant No. 2508085MA014).

\end{document}